\documentclass[11pt]{amsart}

\addtolength{\textwidth}{2cm} \addtolength{\hoffset}{-1cm}
\addtolength{\textheight}{1cm} \addtolength{\voffset}{-0.5cm}
\usepackage{amscd}
\usepackage{xypic}  
\usepackage{amssymb}
\usepackage{amsthm}
\usepackage{epsfig}
\usepackage{url}
\usepackage{tikz}

\usepackage[all,cmtip]{xy}
\usepackage{amsmath}
\usepackage{color}
\DeclareMathOperator{\coker}{coker}

\newtheorem{thm}{Theorem}[section]  

\newtheorem{lemma}[thm]{Lemma}

\newtheorem{proposition}[thm]{Proposition}

\newtheorem{corollary}[thm]{Corollary}

\theoremstyle{definition}

\newtheorem{remark}[thm]{Remark}

 \newtheorem{example}[thm]{Example}

\def\ker{\operatorname{ker}}
\def\coker{\operatorname{coker}}

\def\codim{\operatorname{codim}}

\def\im{\operatorname{im}}

\def\c1{\operatorname{c_1}}
\def\c2{\operatorname{c_2}}

\def\Grass{\operatorname{Grass}}

\def\rk{\operatorname{rk}}

\def\CC{{\mathbb C}}

\def\ZZ{{\mathbb Z}}

\def\PP{{\mathbb P}}

\def\DD{{\mathbb D}}

\def\G{{\mathcal G}}
\def\L{{\mathcal L}}
\def\M{{\mathcal M}}
\def\N{{\mathcal N}}
\def\O{{\mathcal O}}
\def\I{{\mathcal J}}

\def\E{{\mathcal E}}

\def\H{{\mathcal H}}
\def\F{{\mathcal F}}
\def\K{{\mathcal K}}
\def\U{{\mathcal U}}
\def\V{{\mathcal V}}

\def\Q{{\mathcal Q}} 
\def\K{{\mathcal K}}

\def\ff{\mathfrak{F}}
\def\c{\mathfrak{c}}

\def\UU{\mathfrak{U}}

\def\x{\times}                   
\def\cong{\simeq}

\def\+{\oplus}               
\def\*{\otimes}                  

\def\id{\operatorname{id}}

\def\Ext{\operatorname{Ext}}
\def\Hom{\operatorname{Hom}}

\def\Shext{\operatorname{ \mathfrak{e}\mathfrak{x}\mathfrak{t} }}

\def\Pic{\operatorname{Pic}}

\def\det{\operatorname{det}}

\def\geq{\geqslant}
\def\leq{\leqslant}

\author[C.~Ciliberto]{Ciro Ciliberto}
\address{Ciro Ciliberto, Dipartimento di Matematica, Universit{\`a} di Roma Tor Vergata, Via della Ricerca Scientifica, 00173 Roma, Italy}
\email{cilibert@mat.uniroma2.it}

\author[F.~Flamini]{Flaminio Flamini}
\address{Flaminio Flamini, Dipartimento di Matematica, Universit{\`a} di Roma Tor Vergata, Via della Ricerca Scientifica, 00173 Roma, Italy} 
\email{flamini@mat.uniroma2.it}

\author[A.~L.~Knutsen]{Andreas Leopold Knutsen}
\address{Andreas Leopold Knutsen, Department of Mathematics, University of Bergen, Postboks 7800,
5020 Bergen, Norway}
\email{andreas.knutsen@math.uib.no}

\title{Ulrich bundles on Del Pezzo threefolds}
\begin{document}

\maketitle

\begin{abstract}  We prove that for any $r \geq 2$ the moduli space of   stable   Ulrich bundles of rank $r$ and determinant $\O_X(r)$ on any smooth Fano threefold $X$ of index two   is smooth    of dimension $r^2+1$   and that the same holds true for even $r$ when the index is four, in which case 
		no odd--rank Ulrich bundles exist.   In particular this shows that any such threefold is {\it Ulrich wild}. 
		As a preliminary result, we give necessary and sufficient conditions for the existence  
		of  Ulrich bundles on any smooth projective threefold in terms of the existence of a curve in the threefold enjoying special properties.
\end{abstract}

\section{Introduction}   A vector bundle $\E$ on a smooth (complex) projective variety $X \subset \PP^{m}$  is said to be an \emph{Ulrich bundle} if $h^i(\E(-p))=0$ for all $i \geq 0$ and all $1 \leq p \leq \dim(X)$.   
The study of  such  bundles  originates in the paper \cite{U}  from  1984, where  they were  considered in the framework of commutative algebra because they enjoy suitable extremal cohomological properties. 
 The  attention of algebraic geometers  was drawn  by  the  paper  \cite{ESW}, where, among other things, the  Chow form  of a projective variety $X$  is computed  using Ulrich bundles on $X$, if they exist.  

In recent years there has been a lot of work  on Ulrich bundles, mainly investigating the following problems: given  a variety $X \subset \PP^{m}$,  does there always exist an Ulrich bundle  on $X$? What  are  the  possible ranks for Ulrich  bundles?  If   Ulrich bundles exist, are they stable, and what are their moduli?   (Recall that 
Ulrich bundles are always semistable and the notions of stability and slope--stability coincide, cf. \cite[Thm.\;2.9]{ch}).  

Although a lot is known about these problems  for specific  classes of varieties (e.g., curves, Segre and Grassmann varieties, rational scrolls, complete intersections, some classes of surfaces and threefolds, etc.) the above questions  are still open in their full  generality (for  nice surveys on the subject, see for instance \cite{be2,ch,Co,CMP}). 

   In this paper we consider smooth Fano threefolds of {\em even index}, also called {\em Del Pezzo threefolds} (cf. the beginning of \S\;\ref{S:bundles} for the definition and classification of such varieties). In the index--two case our main result is:  
 
\begin{thm}\label{thm:Fanoind2}  Let $X \subset \PP^{m}$  be a smooth  Fano threefold of index two with \linebreak $\omega_X \cong \O_X(-2)$.   For every  integer $r \geqslant 2$, the moduli space  
  of    stable   Ulrich bundles of rank $r$ and determinant $\O_X(r)$ on $X$   is smooth   of dimension  $r^2+1$.  
\end{thm}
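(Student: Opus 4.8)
The moduli space of stable sheaves is smooth at a point $[\E]$, with Zariski tangent space $\Ext^1(\E,\E)$, as soon as the obstruction space $\Ext^2(\E,\E)$ vanishes, and in that case its local dimension equals $\dim\Ext^1(\E,\E)$. The plan is therefore to establish, for an \emph{arbitrary} stable Ulrich bundle $\E$ of rank $r$ with $\det\E\cong\O_X(r)$, the two assertions that $\Ext^2(\E,\E)=0$ (smoothness) and $\dim\Ext^1(\E,\E)=r^2+1$ (dimension). Two of the four relevant groups are immediate. Since $\E$ is stable it is simple, so $\Ext^0(\E,\E)=\Hom(\E,\E)=\CC$. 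For the top group, Serre duality on $X$ together with $\omega_X\cong\O_X(-2)$ gives $\Ext^3(\E,\E)\cong\Hom(\E,\E(-2))^\vee$; as $\E$ and $\E(-2)$ are stable of the same rank with $\mu(\E(-2))<\mu(\E)$, there is no nonzero map between them, whence $\Ext^3(\E,\E)=0$.

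Granting the vanishing $\Ext^2(\E,\E)=0$ for the moment, the dimension follows from Riemann--Roch. With $\dim\Ext^0=1$ and $\Ext^2=\Ext^3=0$ one has $\dim\Ext^1(\E,\E)=1-\chi(\E,\E)$, so it suffices to prove $\chi(\E,\E)=-r^2$. The Ulrich conditions force the Hilbert polynomial $\chi(\E(t))=\tfrac{r\,(H^3)}{6}(t+1)(t+2)(t+3)$, so $\E$ has the same Hilbert polynomial as $\tfrac r2$ copies of a fixed rank--two Ulrich bundle $\F$ with $\det\F\cong\O_X(2)$. Since $\chi(-,-)$ depends only on Chern characters, and these are pinned down by the Ulrich conditions together with $\det\E\cong\O_X(r)$, one obtains $\chi(\E,\E)=\tfrac{r^2}{4}\,\chi(\F,\F)$, reducing everything to the rank--two computation. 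A direct Hirzebruch--Riemann--Roch evaluation for the rank--two Chern data (using $K_X=-2H$) then yields $\chi(\F,\F)=-4$, and hence $\chi(\E,\E)=-r^2$, consistent with the value $r^2+1$ of the theorem (and, at $r=2$, with $\dim=5$).

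The crux, and the main obstacle, is the vanishing $\Ext^2(\E,\E)=0$, equivalently $\Ext^1(\E,\E(-2))=H^1(\mathcal{E}nd(\E)(-2))=0$. Here the plan is to exploit the structural consequences of being Ulrich. Since $\E$ is globally generated with $h^0(\E)=r\,H^3$, there is an evaluation sequence
\begin{equation*}
0\longrightarrow \mathcal{M}\longrightarrow H^0(\E)\otimes\O_X\longrightarrow \E\longrightarrow 0 .
\end{equation*}
Applying $\Hom(-,\E(-2))$ and using the Ulrich vanishing $H^i(\E(-2))=0$ for all $i$, the long exact sequence collapses to an isomorphism $\Ext^1(\E,\E(-2))\cong\Hom(\mathcal{M},\E(-2))$. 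One is thus reduced to ruling out a nonzero homomorphism $\mathcal{M}\to\E(-2)$. As $c_1(\mathcal{M})=-rH$ and $\mathcal{M}$ has rank $r(H^3-1)$, a slope computation gives $\mu(\mathcal{M})>\mu(\E(-2))$ when $H^3\geq 3$, so for these threefolds the map must vanish once $\mathcal{M}$ is shown to be semistable.

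The delicate cases are the low--degree del Pezzo threefolds ($H^3\leq 2$), where the slopes coincide (or the evaluation argument degenerates) and the slope estimate no longer settles the matter. In that range the plan is to invoke the finer geometric input of the preliminary existence result, namely the special properties of the curve $C$ attached to $\E$ through the Hartshorne--Serre correspondence, and to rule out the offending homomorphism directly on the level of the defining extension of $\E$. I expect this borderline analysis to be where essentially all the difficulty resides; the remaining implications (simplicity, the vanishing of $\Ext^3$, and the Riemann--Roch dimension count) become formal once $\Ext^2(\E,\E)=0$ is in hand.
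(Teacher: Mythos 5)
Your proposal addresses only half of the theorem, and it is the easier half. The statement that the moduli space is smooth of dimension $r^2+1$ at every point is exactly the content of Proposition \ref{prop:dimmod} in the paper, which is deliberately phrased conditionally: ``\emph{if nonempty}, the moduli space \ldots is smooth of dimension $r^2+1$.'' The theorem also asserts, essentially, that this moduli space \emph{is} nonempty for every $r\geqslant 2$, and the paper spends most of \S\,\ref{S:bundles} proving precisely that: a slope--stable rank--two Ulrich bundle is produced from an elliptic normal curve of degree $d+2$ (Propositions \ref{prop:beauville} and \ref{prop:beauville-stable}), and one then inducts on the rank via Proposition \ref{prop:indrho}, forming extensions $0\to\E_r\to\F\to\I_{\ell/X}(1)\to 0$ with $\ell$ a general line, and proving that $\F$ is locally free and slope--semistable, that its general deformation is Ulrich, and that the general member of the component containing these extensions is slope--stable (Lemmas \ref{lemma:Flocfree}--\ref{lemma:genUst}). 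None of this appears in your proposal, so as it stands it does not prove the theorem; you have proved (modulo the point below) only that the moduli space is smooth of the right dimension wherever it is nonempty.

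Concerning the part you do treat: the reduction of $\Ext^2(\E,\E)=0$ to $\Hom(\M,\E(-2))=0$ via the evaluation sequence is fine, but the slope argument you then run requires the syzygy bundle $\M$ to be slope--semistable, which you state as a hypothesis rather than prove; for an arbitrary Ulrich bundle on a threefold this is not known a priori and would itself require an argument, so this step is a genuine gap. (Also, for an index--two Fano threefold embedded by $H$ with $\omega_X\cong\O_X(-2)$ one automatically has $d=H^3\geqslant 3$, so the ``low--degree'' cases you set aside do not occur in this theorem --- but that observation does not repair the semistability issue.) The paper obtains the same vanishing unconditionally and more cheaply from the minimal linear resolution $\cdots\to\O_{\PP^{d+1}}(-1)^{\oplus a_1}\to\O_{\PP^{d+1}}^{\oplus a_0}\to\U\to 0$ that every Ulrich sheaf admits, which bounds $h^2(\U\otimes\E^*)$ and $h^3(\U\otimes\E^*)$ by $h^0(\E(-1))$, $h^0(\E(-2))$ and $h^1(\E(-2))$, all zero when $\E$ is Ulrich (Lemma \ref{lemma:vaniz}(i)). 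Your computation $\chi(\E\otimes\E^*)=-r^2$ and the vanishing of $\Ext^3$ via stability are correct and parallel Lemma \ref{lemma:vaniz}(ii).
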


  In the remaining case of even index we prove: 
	
	\begin{thm} \label{thm:veronese}  Let $X \subset \PP^{9}$  be the $2$--Veronese embedding of $\PP^3$.    Then: 
	
	\begin{itemize}
	\item[(i)] $X$ carries no Ulrich bundles of odd rank; 
	
	\item[(ii)] for every  even integer $r \geqslant 2$, the moduli space  
  of   stable  Ulrich bundles of rank $r$ and determinant $\O_X(r)$ on $X$  is smooth of dimension  $r^2+1$.  
	\end{itemize}
\end{thm}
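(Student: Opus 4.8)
The plan is to prove Theorem \ref{thm:veronese} by reducing both statements to the index-two machinery already in place, exploiting the fact that the $2$-Veronese embedding $X = v_2(\PP^3) \subset \PP^9$ is a Fano threefold of index four, so that $\O_X(1)$ corresponds to $\O_{\PP^3}(2)$ and $\omega_X \cong \O_X(-2) \cong \O_{\PP^3}(-4)$. The crucial numerical input is that for a vector bundle $\E$ pulled back or identified with a bundle on $\PP^3$, the Ulrich condition $h^i(\E(-p)) = 0$ for all $i$ and $1 \le p \le 3$ translates into cohomology vanishing for $\E(-2p)$ on $\PP^3$ for $p = 1, 2, 3$, i.e. vanishing after twisting by $\O_{\PP^3}(-2), \O_{\PP^3}(-4), \O_{\PP^3}(-6)$. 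For part (i), I would pin down the determinant and Chern-class constraints forced on an Ulrich bundle from the Hilbert polynomial: an Ulrich bundle of rank $r$ on $X$ has its normalized Hilbert polynomial determined by $\deg(X) = 8$, and a parity computation on $\c1$ relative to the even lattice generated by $\O_{\PP^3}(2)$ should show $\det \E = \O_X(r) = \O_{\PP^3}(2r)$ is forced, whence $\c1(\E) = 2r \in \PP^3$-degree is even. The odd-rank obstruction should then emerge from a divisibility or Riemann--Roch parity argument: computing $\chi(\E)$ or an integrality constraint (for instance $\chi(\E(-1))$ on $\PP^3$, or the half-integer that appears in the $\mu$-stability slope) yields a contradiction unless $r$ is even.

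For the existence and smoothness statement in part (ii), the natural strategy is to establish the rank-two case first and then build higher even ranks by deformations of extensions, exactly paralleling the index-two argument behind Theorem \ref{thm:Fanoind2}. First I would produce a rank-two Ulrich bundle $\E_2$ with $\det \E_2 = \O_X(2) = \O_{\PP^3}(4)$ by the preliminary existence criterion mentioned in the abstract, namely by exhibiting a curve $C \subset X$ with the special properties required (this is the ``necessary and sufficient conditions'' result promised in the introduction); on $\PP^3$ this amounts to finding an appropriate arithmetically Cohen--Macaulay curve of the right degree and genus serving as the zero locus of a section of the twisted bundle. Then, for even $r = 2k$, I would realize a general rank-$r$ Ulrich bundle as a successive extension of rank-two Ulrich bundles and show, via the standard $\Ext$-computation, that the generic such extension is stable and that $\Ext^2(\E, \E) = 0$, giving smoothness, while $\hom(\E,\E) - \ext^1(\E,\E)$ pins the dimension.

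The dimension count $r^2 + 1$ and smoothness both rest on the cohomology of $\E \* \E\v$. The plan is to show $\Ext^1(\E,\E) = H^1(\E \* \E\v)$ has dimension $r^2 + 1$ and $\Ext^2(\E,\E) = H^2(\E \* \E\v) = 0$; combined with stability (so $\hom(\E,\E) = 1$) and the vanishing of $H^i$ in the remaining degrees, the expected dimension of the moduli space at $[\E]$ is $\ext^1(\E,\E) = r^2 + 1$, and $\Ext^2 = 0$ forces the moduli space to be smooth of exactly this dimension. Concretely, using Riemann--Roch on the threefold $X$ together with $\chi(\E \* \E\v) = \chi(\O_X) \cdot r^2 - (\text{Chern-class correction})$ and the Ulrich/stability vanishings, I would compute $\chi(\E \* \E\v)$ and then kill $H^0$ (down to the scalars), $H^2$, and $H^3$ to isolate $h^1$.

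The main obstacle I expect is the vanishing $\Ext^2(\E, \E) = 0 = H^2(\E \* \E\v)$, since the Serre-dual statement $H^1(\E \* \E\v \* \omega_X) = H^1(\E \* \E\v(-2)) = 0$ is not automatic and must be forced from the Ulrich structure of $\E$; on $\PP^3$ this becomes a vanishing for $\E \* \E\v(-4)$ which has to be extracted from the explicit resolution or monad description of $\E$ coming from the curve $C$. A secondary difficulty is handling stability uniformly in the inductive extension step: one must check that the generic extension of lower-rank Ulrich bundles produces a \emph{stable}, not merely semistable, bundle, which requires controlling the destabilizing subsheaves using that all Ulrich bundles here share the same slope. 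I would anticipate that the parity obstruction in part (i) and these two vanishing/stability points in part (ii) are the places where the special geometry of $v_2(\PP^3)$ — as opposed to a general index-two threefold — genuinely enters the argument.
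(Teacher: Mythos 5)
Your strategy for part (ii) is essentially the paper's: rank two via the curve criterion (an elliptic normal curve of degree $10$, i.e.\ Beauville's construction), then induction on even rank by deforming extensions of a general rank--$2h$ Ulrich bundle by a rank--two one, with smoothness from $h^2(\E\*\E\v)=h^3(\E\*\E\v)=0$ (which the paper, like you, extracts from the linear resolution of an Ulrich sheaf on $\PP^9$, Lemma \ref{lemma:vaniz}(i)) and the dimension from $\chi(\E\*\E\v)=-r^2$. Where you genuinely diverge is part (i). The paper routes the parity obstruction through the curve correspondence: by Theorem \ref{thm:fano} an Ulrich bundle of rank $r$ forces a curve of degree $c_2(\E)\cdot\O_X(1)=r(4r-3)$, which must be even because $\O_X(1)\cong\O_{\PP^3}(2)$, so $r$ is even. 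Of the three candidate obstructions you list, two are vacuous --- $\det\E\cong\O_{\PP^3}(2r)$ lies in the lattice for every $r$, and the slope is $\mu(\E)=H^3=8$, an integer --- but the third works and is arguably cleaner than the paper's: the Ulrich condition forces $\chi(\E(t))=\tfrac{8r}{6}(t+1)(t+2)(t+3)$, so the Hilbert polynomial of $\E$ on $\PP^3$ with respect to $\O_{\PP^3}(1)$ is $Q(s)=\tfrac{r}{6}(s+2)(s+4)(s+6)$, and $Q(-1)=\tfrac{5r}{2}\in\ZZ$ forces $r$ even, with no need of the associated curve.

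Two caveats on part (ii). First, your phrase ``the generic such extension is stable'' is not correct as stated: every nonsplit extension $0\to\E_2\to\F\to\E'_{2h}\to 0$ is strictly slope--semistable, since $\E_2$ is a subsheaf of the same slope $8$. What is true (and what the paper proves) is that the generic \emph{deformation} of such an $\F$ inside its moduli component is slope--stable, because the locus of extensions has dimension at most $4h^2+4h+5$, strictly less than the dimension $(2h+2)^2+1$ of the component, and any destabilizing subsheaf must be a deformation of a member of $\UU_{2h}$ (Lemmas \ref{lem:dimextvero} and \ref{lemma:uniquedest2}); you gesture at ``deformations of extensions'' but the dimension count is the actual mechanism and needs to be carried out. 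Second, the computation $\chi(\E_1\*\E_2^*)=-r_1r_2$ is not a routine Riemann--Roch evaluation, since $c_3$ is not pinned down by the Ulrich conditions; the paper avoids it by the symmetry $\E\mapsto\E^*(2)$, which forces $\chi(\E_1\*\E_2^*(-1))=0$ and reduces the count to a Del Pezzo surface section. Neither point derails your plan, but both require more than the ``standard $\Ext$--computation'' you invoke.
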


 Our results in particular show that any such threefold $X$  is \emph{Ulrich wild} (recall that, as suggested by an analogous definition in \cite {DG}, a variety $X$ is said to be Ulrich wild if it possesses families of dimension $p$ of pairwise non--isomorphic, indecomposable, Ulrich bundles for arbitrarily large $p$). Note  that  there  are only very few cases of varieties known to carry stable Ulrich bundles of infinitely many ranks, and even fewer of \emph{any} rank, namely curves, Del Pezzo surfaces and more recently  general blow-ups of the plane  (cf.\,\cite{CFK}). 

 Some special cases of the theorems   were known already: the rank--two cases were proved by Beauville in \cite[Prop. 6.1 and 6.4]{be2}   and, in the case of the $2$--Veronese, already in \cite[Prop.\;5.10--5.11]{ESW},   without the statement about stability. The case of   arbitrary rank on   the cubic threefold in $\PP^4$ was proved in \cite[Prop. 5.4 and Thm. 5.7]{ch} under the additional hypothesis that the threefold is {\it general}, and for {\it any} cubic threefold in \cite[Thm. B]{LMS}\color{black}; the case of intersections of two quadrics ($d=4$) was proved in \cite[Thm. 1.1]{CKL2}, and the case $d=5$ was proved in  \cite[Thm. 1.1]{CKL1}.   \color{black} Our proof   of Theorem \ref{thm:Fanoind2}    is different from \color{black} the ones in these papers, and provides a uniform treatment of all Del Pezzo threefolds, \color{black}   whereas our proof of Theorem \ref{thm:veronese}(ii) is similar to 
 the one in \cite{ch}. \color{black} Theorem \ref {thm:Fanoind2} proves the conjecture stated in \cite[p. 276]{CKL1}.\color{black}

Also note that the cases of rank--one Ulrich bundles  on     Del Pezzo   threefolds are well--known (cf., e.g., \cite{be2,CFaM1,CFaM2,CFiM}  and see  the proof of Proposition \ref{prop:beauville-stable}). For other works regarding Ulrich bundles, or more generally $ACM$ bundles, of rank two on Fano threefolds, we refer to \cite{ac,be0,be1,be3,bf,CFaM1,CFaM2,CFiM,mt}.

We do not claim the irreducibility of the moduli spaces    in Theorems \ref{thm:Fanoind2} and \ref{thm:veronese}.    
Something is known in the rank--two case, cf. Remark \ref{rem:duecomp}, and
\color{black} irreducibility for all ranks in the cases $d=4$ and $5$ follows from the explicit description of the moduli spaces given in \cite[Thm. 1.1]{CKL2} and \cite[Thm. 1.1]{CKL1}, whereas it was 
very recently \color{black} proved in the case of the cubic threefold in \cite[Thm. 1.4]{FP}. Irreducibility in the remaining cases is an interesting open question.   

  Theorems \ref{thm:Fanoind2} and \ref{thm:veronese}(ii) will be proved in \S\;\ref{S:bundles}.    
Before that,  in Theorem\;\ref{thm:codim2}, we  give  necessary and sufficient conditions for 
the existence of a rank $r \geqslant 2$ Ulrich bundle on any smooth projective threefold  $X \subset \PP^m$   in terms of  the existence of  a quite peculiar curve $C$ 
on $X$,  and the peculiarity of this curve may explain  why Ulrich bundles often  seem  quite hard to find.
 Despite  the complexity of the conditions that the curve  has to verify, in  certain 
cases they simplify a bit,  as in the case of Fano threefolds: see Theorem \ref{thm:fano} and Corollaries \ref{cor1} and \ref{cor2},   the latter proving
Theorem \ref{thm:veronese}(i).   We finish the paper with some remarks and speculations concerning  interesting maps to moduli spaces of curves arising from  this correspondence  between Ulrich bundles and curves  (cf. \S\,\ref{S:moduli}).

 Throughout the paper we work over the field of complex numbers. 

\medskip

{\bf Acknowledgements:} C.~Ciliberto and F.~ Flamini are members of GNSAGA of the Istituto Nazionale di Alta Matematica ``F. Severi"  and acknowledge support from  the MIUR Excellence Department Project awarded to the Department of Mathematics, University of Rome Tor Vergata, 
CUP E83C18000100006.  A.~L.~Knutsen acknowledges support from the Trond Mohn Foundation Project ``Pure Mathematics in Norway'' and   the Meltzer Research Fund   grant 261756 of the
Research Council of Norway. The authors thank G.~Casnati for interesting comments concerning Remark \ref{rem:duecomp},  Kyoung-Seog Lee for interesting correspondence and \color{black} the Referee for useful feedback. \color{black}

\section{ A preliminary lemma }\label{S:prel} 

 This short section is devoted to the proof of the following:

\begin{lemma} \label{lemma:dualnosez} Let   $X \subset \PP^m$  be a smooth, irreducible, projective variety with $\deg(X)>1$ and let $\E$ be an Ulrich bundle on $X$.  
Then $h^0(\E^*)=0$.
\end{lemma}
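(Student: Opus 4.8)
The plan is to reduce the statement to the case of a curve, where it follows from semistability together with a slope computation. Since $H^0(\E^*) = \Hom(\O_X,\E^*) = \Hom(\E,\O_X)$, it is equivalent to show that there is no nonzero morphism $\E \to \O_X$. Write $n = \dim X$, $d = \deg(X) \geq 2$ and let $r$ be the rank of $\E$. Let $C = H_1 \cap \cdots \cap H_{n-1} \subset X$ be a general complete intersection of hyperplane sections; by Bertini $C$ is a smooth irreducible curve of degree $d$ and some genus $g \geq 0$. I would first record that the restriction of an Ulrich bundle to a general hyperplane section $H$ is again Ulrich: this is immediate from the sequence $0 \to \E(-p-1) \to \E(-p) \to \E|_H(-p) \to 0$ together with the vanishings $h^i(\E(-p)) = h^i(\E(-p-1)) = 0$ for $1 \leq p \leq n-1$. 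Iterating $n-1$ times, $\E|_C$ is an Ulrich bundle on $C$.

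Next I would exploit the curve. Being Ulrich on $C$ means $h^0(\E|_C(-1)) = h^1(\E|_C(-1)) = 0$, so $\chi(\E|_C(-1)) = 0$; Riemann--Roch on $C$ then gives $\deg(\E|_C) = r(d + g - 1)$, hence the slope is $\mu(\E|_C) = d + g - 1$. This is where the hypothesis $\deg(X) > 1$ enters decisively: it forces $\mu(\E|_C) \geq 1 > 0$. Since $\E|_C$ is Ulrich it is semistable (cf. the semistability recalled in the Introduction), so every nonzero quotient of $\E|_C$ has slope $\geq \mu(\E|_C) > 0$. A nonzero morphism $\E|_C \to \O_C$ would have image a nonzero rank--one subsheaf of $\O_C$, necessarily of nonpositive degree, yet also a quotient of $\E|_C$ of strictly positive slope --- a contradiction. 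Therefore $H^0((\E|_C)^*) = \Hom(\E|_C,\O_C) = 0$.

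Finally I would lift this back to $X$. The restriction map $H^0(X,\E^*) \to H^0(C,\E^*|_C)$ is injective: a nonzero global section of the vector bundle $\E^*$ is nonzero at the generic point, so its non--vanishing locus is open and dense, and a general complete intersection curve $C$ meets it; hence any section vanishing on $C$ is zero. Combined with $H^0((\E|_C)^*) = 0$ this yields $H^0(\E^*) = 0$. The main points to get right are the slope bookkeeping on $C$ --- namely that the Ulrich condition pins down $\deg(\E|_C)$ and that $d > 1$ makes the slope strictly positive --- and the injectivity of restriction of sections to a general $C$; the semistability input is simply quoted. The hypothesis $\deg(X) > 1$ is genuinely essential, since for $X = \PP^n$ the Ulrich bundles are trivial and $h^0(\E^*) = r$, consistent with the slope being exactly $0$ when $d = 1$.
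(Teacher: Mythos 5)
Your proof is correct, but it takes a genuinely different route from the paper's. The paper argues directly on $X$: a nonzero section of $\E^*$ dualizes to a nonzero map $s:\E\to\O_X$ with image an ideal sheaf, and the kernel $\F=\ker(s)$ then has $c_1(\F)=c_1(\E)+D$ with $D$ effective and rank one less, hence strictly larger slope than $\E$ --- contradicting the slope--semistability of Ulrich bundles quoted from \cite[Thm.\;2.9]{ch}. You instead cut down to a general curve section $C$, use the standard fact that Ulrichness is preserved under general hyperplane restriction, compute $\mu(\E|_C)=d+g-1>0$ from $\chi(\E|_C(-1))=0$, and conclude that the semistable bundle $\E|_C$ admits no map onto a rank--one subsheaf of $\O_C$ (which has nonpositive degree); injectivity of restriction of sections to a general $C$ finishes the job. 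Both arguments ultimately rest on the same semistability input, and in fact both need the positivity $c_1(\E)\cdot H^{\dim X-1}>0$, which is exactly where $\deg(X)>1$ enters: the paper leaves this implicit in the strict inequality $\mu(\F)>\mu(\E)$, whereas your Riemann--Roch computation of the slope $d+g-1$ makes it explicit and also explains cleanly why the statement fails for $\PP^n$. What the paper's version buys is brevity and the avoidance of the auxiliary (though standard) restriction-of-Ulrich-bundles step; what yours buys is transparency about the role of the degree hypothesis and the slightly stronger byproduct $\Hom(\E|_C,\O_C)=0$ on the curve section. One small point to keep in mind: the curve $C$ must be chosen general with respect to both the Ulrich restriction property and the fixed nonzero section of $\E^*$, but since each is an open dense condition this causes no difficulty.
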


\begin{proof} Assume by contradiction that $h^0(\E^*)>0$ and pick a  non-zero  section  \linebreak $s^* \in H^0(\E^*)$.  Since $h^0(\E)=\deg(X)\rk(\E) \geq \deg(X)>1$ by \cite[(3.1)]{be2}, we have $\E \not \cong \O_X$. It follows that $\coker(s^*) \neq 0$, and by dualizing we obtain
  \[ \xymatrix{0 \ar[r] &  \F:=\ker(s) \ar[r] & \E \ar[r]^{\hspace{-0.3cm s}} & \O_X,
    }
    \]
    where $\F \neq 0$. In particular, $\rk(\F)=\rk(\E)-1>0$ and $\im(s)$ is an ideal sheaf, whence
    \[ c_1(\F)=c_1(\E)-c_1(\im(s))=c_1(\E)+D,\]
    for an effective divisor $D$ on $X$. Thus,  denoting as usual by $\mu$ the \emph{slope} of a sheaf, we have 
    \[      \mu(\F)  =  \frac{c_1(\F) \cdot \O_X(1)^{\dim(X)-1}}{\rk(\F)}
       >  \frac{c_1(\E) \cdot \O_X(1)^{\dim(X)-1}}{\rk(\E)}=\mu(\E),
    \]
contradicting that $\E$, being Ulrich, is slope--semistable (\cite[Thm. 2.9(a)]{ch}). 
\end{proof}

\begin{remark}\label{rem:Pn} The assumption  $\deg(X) >1$  in Lemma \ref{lemma:dualnosez} is  essential,  as on $\PP^n$  trivial bundles are Ulrich, and in fact the only Ulrich bundles  (cf., e.g., \cite[Thm.\,2.3]{be2}). 
\end{remark}

\section{Curves associated to Ulrich bundles on threefolds}\label{S:corresp}

 In this section we will prove that the existence of Ulrich bundles on a smooth projective threefold is connected to the existence of smooth curves on the threefold with particular properties. Before giving the result, we fix some notation.  Let $X$ be a smooth projective threefold, $C \subset X$ a curve and $D$ a divisor on $X$. The short exact sequence
\begin{equation}\label{eq:villa}
 \xymatrix{
 0 \ar[r]   & \I_{C/X} \ar[r]  & \O_X\ar[r] & \O_C \ar[r] &  0 
}
\end{equation}
tensored by $\O_X(D+K_X)$  determines  a coboundary map
\[ d: H^1(C,\O_C(D+K_X)) \longrightarrow H^2(X,\I_{C/X}(D+K_X)), \]
whose dual, by Serre duality, is
\begin{equation}\label{eq:dstar}
  d^*: \Ext_X^1(\I_{C/X}(D),\O_X) \longrightarrow H^0(C,\omega_C(-D-K_X)).
\end{equation}
 Moreover, for any subspace \[
  W \subseteq \Ext_X^1(\I_{C/X}(D),\O_X) \cong 
  \left(H^2(X,\I_{C/X}(D+K_X))\right)^*\] we obtain a surjection
\begin{equation}
  \label{eq:alfa}
 \xymatrix{ H^2(X,\I_{C/X}(D+K_X)) \ar@{->>}[r] &  W^*}. 
\end{equation}
Since, by Serre duality,
\begin{eqnarray*}
   \Ext_X^1(\I_{C/X}(D),  W^* \* \O_X) \;\;\;  & \cong &  \Ext_X^1(\I_{C/X}(D+K_X),\omega_X) \* W^* \cong  \\
         \;\;\;\;\;\;\; \cong  H^2(\I_{C/X}(D+K_X))^*  \* W^*    & \cong  & \Hom_X(H^2(\I_{C/X}(D+K_X),W^*),
\end{eqnarray*}
we obtain an extension
\begin{equation}
  \label{eq:alfa2}
  \xymatrix{ 0 \ar[r] & W^* \* \O_X \ar[r] & \E \ar[r] & \I_{C/X}(D)   \ar[r] & 0}
  \end{equation}
  corresponding to the cocycle \eqref{eq:alfa}, that is, so that the coboundary map
  \[\xymatrix{H^2(\I_{C/X}(D+K_X)) \ar[r] & H^3(W^* \* \omega_X)\cong W^* \* H^3(\omega_X) \cong W^*}
  \]
  of \eqref{eq:alfa2} tensored by $\omega_X$ is the map \eqref{eq:alfa}.
 For any $\L \in \Pic(X)$, we may
twist \eqref{eq:alfa2} by $\L$ and  obtain a coboundary map 
\begin{equation}
  \label{eq:E0}
  \xymatrix{ H^2(X,\L(D) \* \I_{C/X}) \ar[r] & W^* \* H^3(X,\L)},
  \end{equation}
  which we will call {\it the coboundary map induced by $(W,\L)$}.

\begin{thm} \label{thm:codim2}
  Let  $X \subset \PP^m$  be a smooth projective threefold    with  $\deg(X)>1$  and $D$ be a divisor  on $X$.
  Let $r \geqslant 2$ be an integer.  Set $H:=\O_X(1)$.

  Then  there is a bijection between the set of Ulrich bundles $\E$  on $X$ satisfying  
\begin{equation}\label{eq:lops}
 \rk (\E)=r \; \; \mbox{and} \; \; \det (\E)=\O_X(D) 
 \end{equation}
  and   the set of pairs $(C,W)$, where $C \subset X$ is a smooth curve and $W \subseteq
  \Ext_X^1(\I_{C/X}(D),\O_X)$ is  an $(r-1)$--dimensional subspace, 
such that
\begin{eqnarray}
\label{eq:D} & d^*(W) \;\; \mbox{generates $\omega_C(-K_X-D)$ \;\;(cf. \eqref{eq:dstar}),} & \\
\label{eq:D1} & H^2 \cdot D   =  \frac{r}{2} \left(H^2 \cdot K_X+ 4 H^3\right), & \\
\label{eq:D2}
& H \cdot C   =   \frac{r}{12} \left(K_X^2\cdot H + c_2(X)\cdot H -  22  H^3\right) - \frac{1}{2} \left(H \cdot D \cdot K_X - H \cdot D^2 \right), & \\
\label{eq:D3}
             & g(C)  =  r H^3-r\chi(X,\O_X)-\frac{1}{6}D^3+\frac{1}{4}K_X \cdot D^2 \\
\nonumber & \hspace{2cm}  -\frac{1}{12} \left(K_X^2\cdot D + c_2(X)\cdot D\right)
+C \cdot D+1, & \\
  \label{eq:A} & h^0(K_X+3H-D)=0, \\
\label{eq:B} &   h^0(\I_{C/X}(D-H))=0, \\
\label{eq:C} & h^1(\I_{C/X}(D-pH))=0 \; \; \mbox{for} \; \; p \in \{1,2,3\}, \\
  \label{eq:E} & \mbox{the coboundary map} \; \delta: H^2(\I_{C/X}(D-3H)) \longrightarrow W^* \* H^3(\O_X(-3H)) \\
 \nonumber  & \mbox{induced by  $(W,\O_X(-3H))$  is either injective or surjective}.
\end{eqnarray}

Via the above bijection, $\E$ sits in  an  exact sequence
\begin{equation}\label{eq:drago}
 \xymatrix{
   0 \ar[r]   & W^*  \* \O_X \ar[r]  & \E \ar[r] & \I_{C/X}(D)  \ar[r] & 0.}
\end{equation}
\end{thm}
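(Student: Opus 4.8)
The plan is to run the Hartshorne--Serre correspondence in rank $r$, matching a rank-$r$ Ulrich bundle with a pair $(C,W)$, and to translate the Ulrich vanishing $h^i(\E(-pH))=0$ ($i\ge 0$, $1\le p\le 3$) into the cohomological conditions \eqref{eq:A}--\eqref{eq:E}, the Chern-class data into the numerical conditions \eqref{eq:D1}--\eqref{eq:D3}, and the local freeness of $\E$ into \eqref{eq:D}. I would first treat the backward assignment $(C,W)\mapsto\E$. Given $(C,W)$, form the extension \eqref{eq:alfa2}/\eqref{eq:drago} using the identification (recorded in the excerpt) of $\Ext_X^1(\I_{C/X}(D),W^*\*\O_X)$ with $\Hom(W,\Ext_X^1(\I_{C/X}(D),\O_X))$, taking the class to be the given inclusion $W\hookrightarrow\Ext_X^1(\I_{C/X}(D),\O_X)$. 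The crucial point is local freeness: since $C$ is a smooth, hence lci codimension-$2$, curve, one has $\Shom(\I_{C/X}(D),\O_X)\cong\O_X(-D)$ and $\Shext^1(\I_{C/X}(D),\O_X)\cong\omega_C(-K_X-D)$ (from $\Shext^2(\O_C,\O_X)\cong\det N_{C/X}$), and the local-to-global spectral sequence identifies the edge map $\Ext_X^1(\I_{C/X}(D),\O_X)\to H^0(\omega_C(-K_X-D))$ with the map $d^*$ of \eqref{eq:dstar}. A Hartshorne--Serre argument then shows $\E$ is a vector bundle exactly when the sections $d^*(W)$ generate $\omega_C(-K_X-D)$ at every point of $C$, which is precisely \eqref{eq:D}. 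Granting this, the rank and determinant in \eqref{eq:lops} are immediate from \eqref{eq:drago}, the twisted ideal sheaf contributing trivially to the determinant in codimension one.

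\textbf{Ulrich property and numerical conditions.} Twisting \eqref{eq:drago} by $\O_X(-pH)$ for $p=1,2,3$ and reading the long exact cohomology sequence, I would extract $h^i(\E(-pH))=0$ from the hypotheses on $C$: vanishing of $H^0$ from \eqref{eq:B} (the cases $p=2,3$ following from $p=1$), of $H^1$ from \eqref{eq:C}, and of $H^3$ from \eqref{eq:A} via Serre duality $h^3(\O_X(D-pH))=h^0(K_X+pH-D)$; the simultaneous vanishing of $H^2(\E(-3H))$ and $H^3(\E(-3H))$ is governed by the coboundary map $\delta$ of \eqref{eq:E}, since once $\chi(\E(-3H))=0$ the two groups have equal dimension and $\delta$ injective is equivalent to $\delta$ surjective. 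The numerical equalities \eqref{eq:D1}--\eqref{eq:D3} are a Riemann--Roch translation: being Ulrich forces $\chi(\E(t))=\tfrac{r\,H^3}{6}(t+1)(t+2)(t+3)$, and expanding $\chi(\E(t))=\int_X\operatorname{ch}(\E)\,e^{tH}\operatorname{td}(X)$ and matching coefficients pins down the Chern numbers of $\E$. Substituting the Chern classes computed from \eqref{eq:drago}, where $c_1(\E)=D$ while $c_2(\E)$ and $c_3(\E)$ encode $H\cdot C$ and, via adjunction, $g(C)$, yields \eqref{eq:D1}, \eqref{eq:D2} and \eqref{eq:D3} respectively.

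\textbf{Forward direction and the bijection.} Given an Ulrich $\E$ of rank $r$, I would use $h^0(\E^*)=0$ (Lemma \ref{lemma:dualnosez}) and the global generation of Ulrich bundles to choose a general $(r-1)$-dimensional subspace $W^*\subseteq H^0(\E)$; the evaluation $W^*\*\O_X\to\E$ is then injective with torsion-free cokernel, whose degeneracy locus $C$ is, by a Bertini-type argument, a smooth curve, so the cokernel is $\I_{C/X}(D)$ and we recover \eqref{eq:drago}. The resulting extension class is the desired $W\subseteq\Ext_X^1(\I_{C/X}(D),\O_X)$, and \eqref{eq:D}--\eqref{eq:E} hold by reversing the implications above. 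I would then verify that the two assignments are mutually inverse, the content being that a general $W^*$ produces a smooth $C$ and that the $\E$ reconstructed from $(C,W)$ returns the same pair up to the identifications made.

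\textbf{Main obstacle.} I expect the delicate points to be, first, the local-freeness criterion \eqref{eq:D}, namely correctly running the Hartshorne--Serre correspondence for an $(r-1)$-dimensional space of extension classes and identifying the obstruction with generation of $\omega_C(-K_X-D)$ by $d^*(W)$; and, second, the generic smoothness of the degeneracy curve $C$ in the forward direction. By contrast, the vanishing and numerical conditions \eqref{eq:D1}--\eqref{eq:E}, though computationally involved, become routine bookkeeping once the correspondence and the Riemann--Roch normalization are in place.
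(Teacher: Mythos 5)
Your proposal is correct and follows essentially the same route as the paper's proof: the forward direction via evaluation of a general $(r-1)$--dimensional space of sections with smooth degeneracy curve, local freeness in the backward direction via the $\Shext$--computation identifying condition \eqref{eq:D}, the Ulrich vanishings read off from \eqref{eq:drago} twisted by $\O_X(-pH)$, and the numerical conditions \eqref{eq:D1}--\eqref{eq:D3} obtained by matching the Euler characteristics against the cubic polynomial $(p-1)(p-2)(p-3)$ forced by Riemann--Roch. The only step the paper makes explicit that you gloss over is the commutative diagram reducing the injectivity/surjectivity of all three coboundary maps $\delta_p$ to the single condition \eqref{eq:E} on $\delta_3$, but this is a minor bookkeeping point.
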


\color{black}  \begin{remark}\label{rem:c2} As $C$ is the $(r - 2)$--degeneracy locus of $u_{ r-1}$, it follows that its class of cohomology is $c_ 2 (E)$, see for instance \cite [(2.11)]{ott}.

We point out that the curve $C$ could be the 0--curve, i.e., $C$ could be empty. In this case $H\cdot C=0$, $g(C)=1$, $\mathcal I_{C/X}=\mathcal O_X$ and \eqref {eq:D} is vacuous. This case never happens if $D$ is big and nef, in particular if ${\rm Pic}(X)\cong \mathbb Z$, since in this case $ \Ext_X^1(\O_X(D),\O_X)=0$.

For examples where $C$ is the 0--curve, see \cite {LM}. 

\end{remark}  
\color{black}

\begin{proof}   Let $\E$ be an Ulrich bundle of rank $r$ on $X$ with $ \det (\E)=\O_X(D)$. Then $\E$ is globally generated  by, e.g.,  \cite[Thm.\;2.3(i)]{be2}.    We can 
pick a general $(r-1)$--dimensional subspace $V_{r-1} \subset H^0(\E)$ and let 
    $V_{r -1 } \* \O_X \stackrel{u_{r-1}}{\longrightarrow} \E$ be the evaluation morphism. It is well--known, see, e.g.,  \cite[Thm.~2.8]{ott} or \cite[Thm.~1]{ban}, that 
		$\coker(u_{r-1}) \cong \I_{C/X}(D)$ where $C$, the degeneracy locus of $u_{r-1}$, is a smooth curve, since $\dim ( X) = 3$.  Setting $W:=V_{r-1}^*$,  we thus have an exact sequence  like \eqref{eq:drago}.   We have $h^3(\E(K_X)) = h^0(\E^*) = 0$  by   Serre duality and   Lemma \ref{lemma:dualnosez}.   Therefore \eqref{eq:drago},  tensored by $\O_X(K_X)$, yields a surjection 
$H^2(\I_{C/X} (D+K_X)) \to\!\!\!\to W^*$,  whence, by duality   
\[ W  \subset \left(H^2(\I_{C/X}(D+K_X))\right)^* \cong \Ext_X^1(\I_{C/X}(D), \O_X). \] The  required correspondence associates to $\E$ the pair $(C,W)$.

 Note that,  dualizing  \eqref{eq:drago}, we find
\begin{equation}\label{eq:drago*}
\xymatrix{
0 \ar[r] & \O_X(-D) \ar[r] & \E^* \ar[r]  &  W  \* \O_X \ar[r] &  \omega_C(-K_X-D) \ar[r]  &  0, 
}
\end{equation}
 hence we have the surjection $W\* \O_X \longrightarrow \omega_C(-K_X-D)$ that proves \eqref{eq:D}.

Conversely, assume  we  have a smooth curve $C \subset X$ and an $(r-1)$--dimensional subspace $W \subseteq \Ext_X^1(\I_{C/X} (D),\O_X)$  
such that \eqref{eq:D} is satisfied.  As explained in the beginning of the section, this defines a  sheaf $\E$ on $X$ fitting in 
an exact sequence like \eqref{eq:drago}. 
 Tensoring \eqref{eq:villa}  by $\O_X(D)$ gives 
\[ \Shext_{\O_X}^i(\I_{C/X}(D),\O_X)=0 \;\; \; \mbox{for} \;\; i \geqslant2\]
and
\[ \Shext_{\O_X}^1(\I_{C/X}(D),\O_X) \cong  \Shext_{\O_X}^2(\O_C(D),\O_X) \cong \omega_C(-K_X-D).\]
From \eqref{eq:drago} we therefore find 
\[ \Shext_{\O_X}^i(\E,\O_X)=0, \;\mbox{for} \;\; i \geqslant2,\]
and
\[\Shext_{\O_X}^1(\E,\O_X) = \coker\Big[W \* \O_X \longrightarrow \omega_C(-K_X-D)\Big]=0,\]
by \eqref{eq:D}. Therefore, $\E$ is locally free on $X$.

Thus, to  finish the proof of  the theorem, we have left to show that if $\E$ is a vector bundle  verifying \eqref {eq:lops} and  fitting into an exact sequence like \eqref{eq:drago}, with $C \subset X$ a smooth curve,  $D$  a divisor  and  $W$ an  $(r - 1)$--dimensional subspace of $\Ext_X^1(\I_{C/X}(D),\O_X) $ satisfying \eqref{eq:D}, then $\E$ is Ulrich if and only if  \eqref{eq:D1}--\eqref{eq:E} hold.

Tensoring \eqref{eq:drago} by $\O_X(-pH)$, one sees that $\E$ is Ulrich if and only if
\begin{equation}
  \label{eq:A'}
  h^i(\I_{C/X} (D- pH))=0, \; \; \mbox{for} \; i \in \{0,1,3\}, \; p \in \{1,2,3\}
\end{equation}
and
\begin{eqnarray}
  \label{eq:E'}
  & \mbox{the coboundary maps} \; \delta_p: H^2(\I_{C/X} (D- pH)) \longrightarrow W^* \* H^3(\O_X(-pH)) & \\
\nonumber &   \mbox{are isomorphisms for} \; \; p \in \{1,2,3\}.&
\end{eqnarray}
  
  Condition \eqref{eq:A'} with $i=3$ is equivalent to $h^3(\O_X(D-pH))=0$ for $p \in \{1,2,3\}$, which is 
equivalent to
condition \eqref{eq:A} by Serre duality. 
Condition \eqref{eq:A'} with $i=0$ is equivalent to condition \eqref{eq:B}, whereas  \eqref{eq:A'} with $i=1$ is condition \eqref{eq:C}.

Consider now \eqref{eq:E'}. Conditions \eqref{eq:A}--\eqref{eq:C}, which as we just saw are equivalent to \eqref{eq:A'}, yield that the domain of $\delta_p$ has dimension 
$\chi(\I_{C/X}(D-pH))$. On the other hand, the target  of $\delta_p$  has dimension $(r-1)h^3(\O_X(-pH))=(1-r)\chi(\O_X(-pH))$ by Kodaira vanishing.
Thus, \eqref{eq:E'} is equivalent to the conditions 
\begin{eqnarray}
  \label{eq:E'1}
  & \chi(\I_{C/X}(D-pH))=(1-r)\chi(\O_X(-pH)) \;\; \mbox{for} \; \; p \in \{1,2,3\}, & \; \; \mbox{and} \\
\label{eq:E'2}   & \delta_p \; \; \mbox{is either injective or surjective} \; \; \mbox{for} \; \; p \in \{1,2,3\}.&
\end{eqnarray}
Suppose for a moment  that  condition \eqref {eq:E'1} is verified. For each $1 \leqslant p \leqslant 3$ we have
a commutative diagram
\[
  \xymatrix{
    H^2(\I_{C/X} (D- 3H)) \ar[r]^{\delta_3}  \ar[d]  & V_{r} \* H^3(\O_X(-3H)) \ar@{->>}[d]  \\
    H^2(\I_{C/X}(D-pH)) \ar[r]^{\delta_p}  & V_{r} \* H^3(\O_X(-pH)),  
    }
  \]
  which shows that  $\delta_1, \delta_2, \delta_3$ are surjective (or, equivalently,  injective) if and only if $\delta_3$ is surjective (or, equivalently,  injective).  
  Thus, condition \eqref{eq:E'2} is equivalent to \eqref{eq:E}. 

  Finally, consider condition \eqref{eq:E'1}.  It can be rewritten as 
	\begin{equation}\label{eq:chivarie}
	0 = - \chi(\O_X(D-pH)) + \chi(\O_C(D-pH) ) + (1-r)\chi(\O_X(-pH)), \;\; {\rm for} \;\; p \in \{1,2,3\}.
      \end{equation}
 Using Riemann--Roch on $X$ and $C$, the right hand side can be rewritten as

\begin{eqnarray*}
& \frac{r\,H^3}{6}\,p^3 + \left( \frac{r\,K_X \cdot H^2}{4} -  \frac{D\cdot H^2}{2} \right)\,p^2 + & \\
  & + \left( \frac{r}{12}(K_X^2 + c_2(X)) \cdot H  - \frac{H\cdot D \cdot K_X}{2} + \frac{H\cdot D^2}{2} - C\cdot H \right) \, p + & \\
&	\frac{1}{2} K_X \cdot D^2+ C\cdot D + 1 - g(C) - r \chi(\O_X)
  -  \frac{1}{6} D^3 - \frac{K_X\cdot D^2}{4} - \frac{1}{12}(K_X^2 + c_2(X)) \cdot D. &
  \end{eqnarray*}
	Dividing by the leading coefficient $\frac{r\,H^3}{6}$ gives a monic, cubic polynomial  in $p$,  which, by condition 
	\eqref{eq:E'1},  must coincide with
        $(p-1) (p-2) (p-3) = p^3 - 6 p^2 + 11 p -6$.
          
        Equating  the coefficients of  the terms of degrees $2$, $1$ and $0$ gives, respectively, \eqref{eq:D1}, \eqref{eq:D2} and
\eqref{eq:D3}.
\end{proof}

    Despite the  complexity of  conditions \eqref {eq:D}--\eqref {eq:E} in Theorem \ref {thm:codim2}, in certain cases some of them may be replaced by slightly easier ones, as shown in the following:

  \begin{lemma} \label{obs:codim2}  In the same setting as in Theorem \ref{thm:codim2},  we have:
    \begin{itemize}
    \item[(i)] If  $h^i(\O_X(-D))=0$ for $i \in\{1,2\}$, then $W$ can be identified with a subspace of $H^0(\omega_C(-K_X-D))$, and condition \eqref{eq:D} says that $W$ generates $\omega_C(-K_X-D)$.
    \item[(ii)] If  $h^i(\O_X(D-3H))=0$ for $i \in\{1,2\}$, then condition
      \eqref{eq:E} is equivalent to
      \begin{eqnarray}\label{eq:E*}
        & \mbox{the multiplication map} \; \nu: W \* H^0(\O_X(K_X+3H)) \longrightarrow H^0(\omega_C(3H-D)) & \\
 \nonumber   & \mbox{is either injective or surjective}. &
      \end{eqnarray}
    \end{itemize}
\end{lemma}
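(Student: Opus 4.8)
The plan is to translate each cohomological condition from Theorem \ref{thm:codim2} into the more concrete statements of Lemma \ref{obs:codim2} by systematically exploiting the defining exact sequence \eqref{eq:villa} (twisted appropriately) together with the given vanishing hypotheses. For part (i), I would tensor \eqref{eq:villa} by $\O_X(D)$ and dualize, or equivalently examine the long exact cohomology sequence of \eqref{eq:villa} twisted by $\O_X(D+K_X)$. The key observation is that the coboundary map $d\colon H^1(\O_C(D+K_X)) \to H^2(\I_{C/X}(D+K_X))$ becomes an isomorphism precisely when the neighbouring terms $H^1(\O_X(D+K_X))$ and $H^2(\O_X(D+K_X))$ vanish; by Serre duality on $X$ these are $h^2(\O_X(-D))$ and $h^1(\O_X(-D))$ respectively, which are exactly the quantities assumed to vanish in the hypothesis of (i). Dualizing this isomorphism via \eqref{eq:dstar}, the inclusion $W \subseteq \Ext^1_X(\I_{C/X}(D),\O_X)$ transports to an inclusion $W \subseteq H^0(\omega_C(-K_X-D))$, and the map $d^*$ restricted to $W$ becomes the identity-type identification, so that condition \eqref{eq:D} (that $d^*(W)$ generates $\omega_C(-K_X-D)$) literally becomes the statement that $W$ itself generates that line bundle.

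For part (ii), the strategy is to identify the source and target of the coboundary map $\delta$ in \eqref{eq:E} with the source and target of the multiplication map $\nu$ in \eqref{eq:E*}, under a compatible pairing that turns injectivity/surjectivity of one into the same for the other. First I would use the hypothesis $h^i(\O_X(D-3H))=0$ for $i\in\{1,2\}$ together with the long exact sequence of \eqref{eq:villa} twisted by $\O_X(D-3H)$ to identify $H^2(\I_{C/X}(D-3H))$ with $H^1(\O_C(D-3H))$, whose Serre dual on the curve $C$ is $H^0(\omega_C(3H-D))^*$ — this matches the target of $\nu$ up to dualization. On the other side, $W^* \* H^3(\O_X(-3H))$ is, by Serre duality on $X$, dual to $W \* H^0(\O_X(K_X+3H))$, which is exactly the source of $\nu$. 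The heart of the argument is then to check that $\delta$ and the dual of $\nu$ coincide (up to the natural isomorphisms just described), so that $\delta$ is injective or surjective iff $\nu$ is surjective or injective respectively — and since injective-or-surjective is a self-dual condition, the equivalence \eqref{eq:E} $\Leftrightarrow$ \eqref{eq:E*} follows.

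The main obstacle I anticipate is the compatibility check in part (ii): verifying that the coboundary map $\delta$, which is defined abstractly through the extension \eqref{eq:alfa2} and its twisting as in \eqref{eq:E0}, genuinely agrees with the transpose of the cup-product multiplication map $\nu$. This requires tracing through the construction of the Ulrich bundle as an extension and identifying the coboundary of \eqref{eq:drago} (twisted by $\O_X(-3H)$) with a cup product against the extension class, which in turn pairs with sections of $\O_X(K_X+3H)$ to give the multiplication into $H^0(\omega_C(3H-D))$. Concretely, I expect to invoke the functoriality of Serre duality and the fact, established in the construction preceding Theorem \ref{thm:codim2}, that the coboundary of \eqref{eq:alfa2} tensored by $\omega_X$ recovers the chosen cocycle \eqref{eq:alfa}. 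The remaining identifications of source and target are routine applications of Kodaira vanishing and Serre duality, so the diagram-chasing/naturality step is where the genuine content lies.
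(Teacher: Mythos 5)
Your proposal is correct and follows essentially the same route as the paper: in (i) the vanishing of $h^1(\O_X(D+K_X))$ and $h^2(\O_X(D+K_X))$ (Serre dual to the stated hypotheses) makes $d$, hence $d^*$, an isomorphism; in (ii) the vanishing makes the coboundary $\gamma: H^1(\O_C(D-3H)) \to H^2(\I_{C/X}(D-3H))$ an isomorphism, and the composite $\delta\circ\gamma$ is identified with the Serre dual of $\nu$, so the self-dual condition ``injective or surjective'' transfers. The compatibility check you flag as the main obstacle is exactly the step the paper dispatches with the phrase ``by Serre duality,'' relying on the defining property of the cocycle \eqref{eq:alfa}, so no further work is needed beyond what you outline.
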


\begin{proof}
  (i) The assumptions yield that $d^*$ is an isomorphism, proving the assertion.

  (ii) The assumptions yield that the coboundary map  of \eqref{eq:villa}
  tensored by $\O_X(D-3H)$,  
\[ \gamma:H^1(\O_C(D-3H) \longrightarrow H^2(\I_{C/X}(D-3H)), \] is an isomorphism. The composed map
   $\delta \circ \gamma: H^1(\O_C(D-3H) \to V_{r-1} \* H^3(\O_X(-3H))$ is the dual of the multiplication map $\nu$, by Serre duality. Hence we see that condition \eqref{eq:E} is equivalent to \eqref{eq:E*}. 
\end{proof}

\begin{remark}\label{rem:smoothdiv}  Let $\E$ be an Ulrich bundle of rank $r$ sitting in an exact sequence like \eqref {eq:drago},  and set $V_{r-1}:=W^*$.   Since  $\E$ is generated by global sections  (\cite[Thm.\;2.3(i)]{be2}) and is nontrivial (Lemma \ref{lemma:dualnosez}), the divisor $D$ in Theorem \ref{thm:codim2} must be effective and nonzero (by Porteous). Pick  a general $r$-dimensional subspace $V_{r} \subseteq H^0(\E)$ containing
  $V_{r-1}$  and  consider the evaluation morphism $V_{r} \* \O_X \stackrel{u_{r}}{\longrightarrow} \E$. It is well--known (see again, e.g.,\,\cite[Thm.~2.8]{ott} or \cite[Thm.~1]{ban}) 
	that $\L:=\coker(u_{r})$ is supported on a member $S$ of $|\det\,(\E)|=|D|$ (the degeneracy locus of $u_{r}$), which is smooth as $\dim(X)=3$, and  that $\L$ 
 is locally free of  rank one  on $S$.  More precisely, by the Snake Lemma, we have a commutative diagram 
\begin{equation}\label{eq:diaguvbis}
 \xymatrix{
		&   & & 0 \ar[d] & \\
		& 0 \ar[d]& 0 \ar[d] & \O_X \ar[d] & \\
 0 \ar[r]   & V_{r-1}  \* \O_X \ar[r]^{\hspace{0.6cm}u_{r-1}} \ar[d] & \E\ar@{=}[d] \ar[r] & 
 \I_{C/X}(D) \ar[d] \ar[r] & 0 \\
0 \ar[r] & V_{r}  \* \O_X \ar[r]^{\hspace{0.2cm}u_{r}} \ar[d] &  \E \ar[r]  \ar[d]&  \L \ar[r]  \ar[d] &  0 \\
 & \O_X \ar[d]  &  0 &  0  &   \\
& 0 &  &  &
}
\end{equation}
from which we see that $C \subset S$ (corresponding to the  non--zero section of $ \I_{C/X}(D)$ appearing in the rightmost  column)  and $\L \cong \O_S(D-C)$.
 Dualizing the lower horisontal sequence in \eqref{eq:diaguvbis} 
we find 

\[
 \xymatrix{
0 \ar[r] & \E^* \ar[r]  &  V_{r}^*  \* \O_X \ar[r]  &   \Shext_{\O_X}^1(\L,\O_X) \cong   \O_S(C)  \ar[r]&  0,
 }
 \]which shows  that   $\O_S(C)$    is globally generated by $r$ sections. 
\end{remark}

 In the case of smooth embedded Fano threefolds, we get:

\begin{thm} \label{thm:fano} Let  $X \subset \PP^m$  be a smooth (Fano) threefold of degree $d>1$ with  \linebreak $\omega_X \cong \O_X(-\alpha)$, for $\alpha \in \{1,2,3\}$.
   Then $X$ carries an Ulrich bundle  $\E$ satisfying  
  \[ \rk (\E) = r \geqslant 2  \; \; \mbox{and} \; \; \det (\E) =  \O_X(m),  \; \; \mbox{with} \; \; m \in \ZZ^+, \]
  if and only if
\begin{eqnarray}
  \label{eq:fano1} & m=\frac{r}{2} (4 - \alpha)
                     \end{eqnarray}
and
there exists a smooth curve $C \subset X$ such that
  \begin{eqnarray}
  \label{eq:fano2} &   \deg(C) = \frac{r d}{12} (\alpha^2 - 22)  + \frac{2 r}{\alpha} + \frac{md}{2}(\alpha + m) \\
     \label{eq:fano3} &  g(C)= r(d-1)+\frac{1}{3}m^3d+\frac{1}{4}\alpha m^2d+\frac{1}{12}\left[r(\alpha^2-22)-\alpha^2\right]md+\frac{2(r-1)}{\alpha}m+1, \\ 
\label{eq:fano4} &\omega_C (\alpha-m) \; \; \mbox{is generated by  an   $(r-1)$-dimensional space} \; \; W, \\
    \label{eq:fano6} &  \mbox{the multiplication map} \; \; \\
  \nonumber &  \nu: W \* H^0(\O_X(3-\alpha)) \longrightarrow H^0(\omega_C(3-m)) \\
    \nonumber &  \mbox{ is either injective or surjective}, \\
    \label{eq:fano5} &  \mbox{the restriction map} \; \; H^0(X,\O_X(m-p)) \longrightarrow H^0(C,\O_C(m-p)) \\
    \nonumber &  \mbox{is an isomorphism for} \; \; p \in \{1,2,3\}.
\end{eqnarray}
\end{thm}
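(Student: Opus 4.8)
The plan is to derive Theorem \ref{thm:fano} as a specialization of Theorem \ref{thm:codim2} to the Fano case $\omega_X \cong \O_X(-\alpha)$, rewriting each of the abstract conditions \eqref{eq:D}--\eqref{eq:E} in the simpler numerical form stated here. First I would set $D = mH$ and substitute $K_X = -\alpha H$ throughout. Using $H^3 = d$ and $H^2 \cdot H = d$, condition \eqref{eq:D1} reads $H^2 \cdot (mH) = \frac{r}{2}(H^2 \cdot (-\alpha H) + 4d)$, i.e. $md = \frac{r}{2}(-\alpha d + 4d) = \frac{rd}{2}(4-\alpha)$, which (dividing by $d > 0$) gives exactly \eqref{eq:fano1}. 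This immediately pins down $m$ in terms of $r$ and $\alpha$, and in particular forces $m \in \ZZ^+$ precisely as stated.

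Next I would translate the degree and genus conditions. Substituting $D = mH$ and $K_X = -\alpha H$ into \eqref{eq:D2} and \eqref{eq:D3}, and using the numerical identities $H \cdot D = md$, $H \cdot D^2 = m^2 d$, $H \cdot D \cdot K_X = -\alpha m d$, $D^3 = m^3 d$, $K_X \cdot D^2 = -\alpha m^2 d$, together with $K_X^2 \cdot H = \alpha^2 d$ and the relation $\chi(X,\O_X)=1$ for a Fano threefold, I would reduce \eqref{eq:D2} to \eqref{eq:fano2} and \eqref{eq:D3} to \eqref{eq:fano3}, writing $\deg(C) = H \cdot C$ and $C \cdot D = m \deg(C)$. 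These are mechanical Chern-class computations; the only subtlety is bookkeeping the Noether-type term $\frac{r}{12}(K_X^2 + c_2(X))\cdot H$, which via $\chi(X,\O_X)=\frac{1}{24}(-K_X)\cdot c_2(X)$ and $\chi(X,\O_X)=1$ lets me eliminate $c_2(X)\cdot H$ in terms of $d$ and $\alpha$. I would carry these out in the background and simply record the resulting expressions.

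For the remaining cohomological conditions I would invoke Lemma \ref{obs:codim2}, whose hypotheses I must verify in the Fano setting. Since $\O_X(-D) = \O_X(-mH)$ with $m > 0$, Kodaira vanishing gives $h^i(\O_X(-mH)) = 0$ for $i \in \{1,2\}$ (as $-mH + \alpha H - \alpha H$; more directly $h^i(\O_X(-mH))=h^{3-i}(\O_X(mH-\alpha H))$, vanishing for the relevant $i$), so part (i) identifies $W$ with a subspace of $H^0(\omega_C(-K_X-D)) = H^0(\omega_C(\alpha - m))$ and turns \eqref{eq:D} into \eqref{eq:fano4}. Similarly $\O_X(D-3H) = \O_X((m-3)H)$ satisfies the vanishing needed for part (ii), so condition \eqref{eq:E} becomes the multiplication map \eqref{eq:E*}, which with $\O_X(K_X+3H)=\O_X((3-\alpha)H)$ and $\omega_C(3H-D)=\omega_C(3-m)$ is precisely \eqref{eq:fano6}. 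Condition \eqref{eq:A}, $h^0(K_X+3H-D)=h^0((3-\alpha-m)H)=0$, holds automatically once $m \geq 2$ (which follows from \eqref{eq:fano1} for $\alpha \leq 3$, $r \geq 2$), so I can discard it. Finally I would show that conditions \eqref{eq:B} and \eqref{eq:C} together—$h^0(\I_{C/X}((m-1)H))=0$ and $h^1(\I_{C/X}((m-p)H))=0$ for $p\in\{1,2,3\}$—are equivalent to the restriction maps $H^0(\O_X((m-p)H)) \to H^0(\O_C((m-p)H))$ being isomorphisms for $p \in \{1,2,3\}$, which is \eqref{eq:fano5}; this uses the cohomology sequence of \eqref{eq:villa} twisted by $\O_X((m-p)H)$ together with $h^1(\O_X((m-p)H))=0$ (Kodaira, using $m-p \geq -1$) to identify injectivity with \eqref{eq:B}/kernel vanishing and surjectivity with \eqref{eq:C}.

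The main obstacle I anticipate is not conceptual but bookkeeping: verifying cleanly that all the Kodaira-vanishing hypotheses of Lemma \ref{obs:codim2} and the reductions of \eqref{eq:A}, \eqref{eq:B}, \eqref{eq:C} hold uniformly for the boundary exponents (especially $m-3$, which can be negative, e.g.\ equals $-1$ when $\alpha=3,r=2$), and assembling the restriction-map reformulation \eqref{eq:fano5} so that it simultaneously captures both \eqref{eq:B} and \eqref{eq:C}. Once those vanishings are in place, the equivalence is a direct dictionary translation of Theorem \ref{thm:codim2} and Lemma \ref{obs:codim2}, and the theorem follows.
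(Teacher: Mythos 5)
Your proposal is correct and follows essentially the same route as the paper: apply Theorem \ref{thm:codim2} with $D=mH$, $K_X=-\alpha H$, use Lemma \ref{obs:codim2}(i)--(ii) to turn \eqref{eq:D} and \eqref{eq:E} into \eqref{eq:fano4} and \eqref{eq:fano6}, eliminate $c_2(X)\cdot H$ via $\chi(\O_X)=\frac{\alpha}{24}c_2(X)\cdot H=1$, and convert \eqref{eq:B}--\eqref{eq:C} into the restriction-map condition \eqref{eq:fano5} using $h^1(\O_X(m-p))=0$. The only blemishes are two harmless numerical slips in your justifications ($m\geq 2$ does not suffice for \eqref{eq:A} when $\alpha=1$, and $m-p$ can equal $-2$), but the actual bounds coming from \eqref{eq:fano1} and Kodaira vanishing for all twists make the conclusions hold exactly as in the paper.
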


\begin{proof}
  We apply Theorem \ref{thm:codim2}.
  Using Lemma \ref{obs:codim2}(i), we see that condition \eqref{eq:fano4} is \eqref{eq:D}. 
  Conditions \eqref{eq:D1}, \eqref{eq:D2} and \eqref{eq:D3} in Theorem \ref{thm:codim2} are, respectively,  
\eqref{eq:fano1}, \eqref{eq:fano2} and \eqref{eq:fano3}. (To  compute  $c_2(X) \cdot H$, we use    
  $1=\chi(X, \mathcal O_X) = \frac{\alpha}{24} c_2 \cdot H$ by, e.g.,
  \cite[Exc. 15.2.5]{Fu}.)

Now consider condition \eqref{eq:A} in Theorem \ref{thm:codim2}. It reads  
 $h^0(\O_X(3-\alpha-m))=0$, which by \eqref{eq:fano1} is equivalent to $3-\alpha-\frac{r}{2}\left(4-\alpha\right) <0$,  which  is  clearly  satisfied.

  Next consider conditions \eqref{eq:B}--\eqref{eq:C} in Theorem \ref{thm:codim2}. Together they  are equivalent to 
\begin{equation} \label{eq:fanvan}
  h^i(X,\I_{C/X}(m-p))=0 \; \; \mbox{for} \; \; i \in \{0,1\}, \; p \in \{1,2,3\}.
  \end{equation}
From the short exact sequence
\[
  \xymatrix{
0 \ar[r] & \I_{C/X}(m-p) \ar[r] & \O_X(m-p) \ar[r] & \O_C(m-p) \ar[r] & 0, 
    }
  \]
  and the fact that $h^1(\O_X(m-p))=0$,
 we see that \eqref{eq:fanvan} is equivalent to \eqref{eq:fano5}.

 Finally,  by  Lemma \ref{obs:codim2}(ii),  condition  \eqref{eq:E} in Theorem \ref{thm:codim2}
can be restated as \eqref{eq:fano6}.
\end{proof}

\begin{remark} \label{rem:fano}  Suppose $\E$ is an Ulrich  bundle on a   smooth Fano  threefold  $X$ as  in  Theorem \ref {thm:fano}. 
 Then  $h^i(\E^*)=h^{3-i}(\E(K_X))=h^{3-i}(\E(-\alpha))=0$ for all $0 \leqslant i \leqslant 3$,   so that  the cohomology of \eqref{eq:drago*}  yields  $W=H^0(\omega_C(\alpha- m))$.  
\end{remark}

 We will make use of Theorem \ref{thm:fano} in the proof of   our main results   in the next section. In the rest of this section we give some consequences and examples.

\begin{corollary} \label{cor1}
 Let $X \subset \PP^{g+1}$ be a smooth Fano threefold of degree $2g-2$ such that $\omega_X\cong \O_X(-1)$ and $\Pic(X) \cong \ZZ[\O_X(1)]$. Then $X$ carries no Ulrich bundles of odd rank.   
\end{corollary}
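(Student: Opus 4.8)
The plan is to invoke Theorem~\ref{thm:fano}, specialized to $\alpha=1$. Since $\omega_X\cong\O_X(-1)$ we are in the Fano situation with $\alpha=1$ and $d=2g-2>1$, while the hypothesis $\Pic(X)\cong\ZZ[\O_X(1)]$ forces the determinant of any vector bundle on $X$ to have the shape $\O_X(m)$ for a unique integer $m$. Thus every Ulrich bundle $\E$ on $X$ automatically satisfies $\det(\E)=\O_X(m)$ with $m\in\ZZ$; moreover $\E$ is globally generated and nontrivial, so Remark~\ref{rem:smoothdiv} makes $\O_X(m)$ effective and nonzero, i.e.\ $m\in\ZZ^+$. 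In particular, for $r\geq 2$ such a bundle meets the hypotheses of Theorem~\ref{thm:fano}.

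Suppose, for contradiction, that $\E$ is an Ulrich bundle of odd rank $r\geq 3$. By the previous paragraph $\det(\E)=\O_X(m)$ with $m\in\ZZ^+$, so the ``only if'' direction of Theorem~\ref{thm:fano} applies and yields the necessary condition \eqref{eq:fano1}, which for $\alpha=1$ reads $m=\tfrac{r}{2}(4-\alpha)=\tfrac{3r}{2}$. As $r$ is odd, $\tfrac{3r}{2}\notin\ZZ$, contradicting $m\in\ZZ$. Hence no Ulrich bundle of odd rank $\geq 3$ can exist.

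It remains to rule out rank one, which falls outside the range $r\geq 2$ of Theorem~\ref{thm:fano}. For this I would argue with the underlying Chern-class identity \eqref{eq:D1}, which holds for every rank: for any Ulrich bundle $\E$ the vanishings $h^i(\E(-p))=0$, $1\leq p\leq 3$, give $\chi(\E(-1))=\chi(\E(-2))=\chi(\E(-3))=0$, so the Hilbert polynomial, a cubic in $t$ with leading coefficient $\tfrac{r\,H^3}{6}$, must equal $\tfrac{r\,H^3}{6}(t+1)(t+2)(t+3)$; matching the coefficient of $t^2$ with the one supplied by Riemann--Roch gives $c_1(\E)\cdot H^2=\tfrac{r}{2}(K_X\cdot H^2+4H^3)$, i.e.\ \eqref{eq:D1}. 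For $r=1$ and $\alpha=1$ (so $K_X\cdot H^2=-H^3$) this becomes $mH^3=\tfrac{3}{2}H^3$, forcing $m=\tfrac{3}{2}\notin\ZZ$, again absurd. Thus $X$ carries no Ulrich line bundle either, and the corollary follows.

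The argument has essentially no technical obstacle: its whole content is the parity (integrality) of the Ulrich slope dictated by \eqref{eq:fano1}, equivalently \eqref{eq:D1}. The single point requiring care is that Theorem~\ref{thm:fano} is stated only for $r\geq 2$, so the rank-one case must be dispatched via the Chern-class identity \eqref{eq:D1} directly rather than through the full Ulrich-bundle/curve correspondence.
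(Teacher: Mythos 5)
Your proof is correct and follows the paper's approach: the core of both arguments is that \eqref{eq:fano1} with $\alpha=1$ forces $m=\tfrac{3r}{2}\in\ZZ$, hence $r$ even. The only (immaterial) difference is the rank-one case, which the paper dispatches by citing the known non-existence of Ulrich line bundles on varieties of degree $>1$ whose Picard group is generated by the hyperplane class, while you rederive it directly from the Hilbert-polynomial identity underlying \eqref{eq:D1}; both are valid.
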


\begin{proof}
It is well--known that varieties of degree  $d > 1$ with Picard group generated by the hyperplane class cannot carry Ulrich line bundles (see, e.g., \cite[\S\;4]{be2}). As for rank $r\geq 2$, Theorem \ref{thm:fano} (with $\alpha=1$) yields that $m=\frac{3r}{2}$ must be an integer, cf.\;\eqref{eq:fano1}. 
\end{proof}

Recall that Fano threefolds satisfying the hypotheses of the corollary (called {\it prime Fano threefolds of index one}) are classified and that $g \in \{3,\ldots,10,12\}$ (see \cite {IP}). In \cite{CFK-ind1} we  prove that all such Fano threefolds carry Ulrich bundles of all even ranks.

\begin{example} \label{ex:fanoindex1-rk2}
  Let $X \subset \PP^{g+1}$ be a  smooth  Fano threefold of degree $2g-2$ such that  $\omega_X \cong \O_X(-1)$.
  Theorem \ref{thm:fano} yields that $X$ carries a rank--two Ulrich bundle $\E$ such that
$\det (\E) =  \O_X(m)$ 
if  and only if $m=3$ and there exists a smooth curve $C \subset X$ of degree $5g-1$ such that $\omega_C \cong \O_C(2)$ (in particular $g(C)= 5g$) and $C \subset \PP^{g+1}$ is non--degenerate and linearly and quadratically normal.

 Since  in \cite{CFK-ind1} we  prove the existence of such rank--two Ulrich bundles in the cases where $\Pic(X) \cong   \ZZ[\O_X(1)]  $, the existence of such a curve will follow as a consequence. 
\end{example}

  The following proves Theorem \ref{thm:veronese}(i):  

\begin{corollary} \label{cor2}
Let $X \subset \PP^9$ be the $2$--Veronese embedding of $\PP^3$. Then $X$ carries no Ulrich bundles of odd rank.     
\end{corollary}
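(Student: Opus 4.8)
The plan is to apply Theorem \ref{thm:fano} to the specific variety $X \subset \PP^9$, which is the $2$-Veronese embedding of $\PP^3$. The key observation is that this embedding realizes $\PP^3$ with the very ample line bundle $\O_{\PP^3}(2)$, so that $\O_X(1) \cong \O_{\PP^3}(2)$ under the isomorphism $X \cong \PP^3$. Since $\omega_{\PP^3} \cong \O_{\PP^3}(-4) \cong \O_{\PP^3}(2)^{\otimes(-2)}$, we have $\omega_X \cong \O_X(-2)$, so that $X$ is a Fano threefold of index $\alpha = 2$ in the notation of Theorem \ref{thm:fano}. The degree is $d = \O_X(1)^3 = (2H_{\PP^3})^3 = 8$, which exceeds $1$, so the hypotheses of Theorem \ref{thm:fano} are met.

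The main step is then the numerical one: I would invoke condition \eqref{eq:fano1}, which asserts that any Ulrich bundle of rank $r$ on $X$ must have $\det(\E) = \O_X(m)$ with $m = \frac{r}{2}(4 - \alpha) = \frac{r}{2}(4 - 2) = r$. The heart of the argument, however, lies in an obstruction to odd rank that does not come directly from \eqref{eq:fano1} (since $m = r$ is automatically an integer for every $r$). Instead, the parity constraint must be extracted from the requirement that an actual Ulrich bundle exists, and here the cleanest route is to pass through the $2$-divisibility of the first Chern class. Since $\O_X(1) = \O_{\PP^3}(2)$ is itself divisible by $2$ in $\Pic(\PP^3) \cong \ZZ$, writing $H := \O_X(1)$ and $h := \O_{\PP^3}(1)$ we have $\det(\E) = \O_X(r) = \O_{\PP^3}(2r)$, so $c_1(\E) = 2r\, h$, whereas the relevant integrality or parity invariant attached to $\E$ as a bundle on $\PP^3$ forces $r$ to be even.

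Concretely, I would translate the existence problem back to $\PP^3$: an Ulrich bundle of odd rank $r$ on $X$ would yield, via the correspondence of Theorem \ref{thm:fano}, a smooth curve $C \subset \PP^3$ satisfying \eqref{eq:fano2} and \eqref{eq:fano3}, and I would compute these invariants with $\alpha = 2$, $d = 8$, $m = r$, then check whether $\deg(C)$ and $g(C)$ are forced to be non-integers (or otherwise numerically impossible) when $r$ is odd. Plugging $\alpha = 2$, $d = 8$ into \eqref{eq:fano2} gives $\deg(C) = \frac{8r}{12}(4 - 22) + \frac{2r}{2} + \frac{8r}{2}(2 + r) = -12r + r + 4r(2+r) = 4r^2 + r$, which is an integer for all $r$; so the obstruction is not here either, and one must examine \eqref{eq:fano3} or \eqref{eq:fano4} more closely. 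I anticipate that the genuine parity obstruction surfaces in the generation condition \eqref{eq:fano4}: the line bundle $\omega_C(\alpha - m) = \omega_C(2 - r)$ on $\PP^3$ pulls back to $\O_{\PP^3}$-twists with half-integer degree per hyperplane when $r$ is odd.

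The hard part will be pinning down \emph{exactly} which of the conditions \eqref{eq:fano1}--\eqref{eq:fano5} carries the parity obstruction, and articulating it cleanly. My expectation is that the decisive point is an integrality constraint on the degree of $\omega_C(\alpha - m)$ measured in units of the original polarization $h$ on $\PP^3$: since $H = 2h$, any divisor of the form $kH + K_X$ twisted appropriately has a well-defined degree on $C$, but the requirement that $\omega_C(2 - r)$ be a genuine line bundle generated by global sections, compatible with the even lattice structure $\det(\E) = \O_{\PP^3}(2r)$, forces $r$ even. Rather than grinding through \eqref{eq:fano3} in full, I would look for the shortest parity witness — likely that $H^0(\O_X(3 - \alpha)) = H^0(\O_{\PP^3}(2)) = H^0(\O_X(1))$ enters \eqref{eq:fano6} in a way that, combined with the half-integral twist $3 - m = 3 - r$ being odd precisely when $r$ is even, produces a contradiction for odd $r$. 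The cleanest and most likely intended argument, however, is simply that $\O_X(m) = \O_{\PP^3}(2m)$ combined with the structural constraints of an Ulrich bundle on an even-index Veronese forces the rank-parity via the determinant lattice, mirroring exactly the mechanism behind Corollary \ref{cor1}; so I would first attempt to show directly that an odd-rank Ulrich bundle would have $c_1$ non-divisible by the index in a way incompatible with the Veronese's lattice, and only fall back on the explicit curve invariants if that abstract route stalls.
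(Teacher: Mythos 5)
Your setup is right — apply Theorem \ref{thm:fano} with $\alpha=2$, $d=8$, $m=r$ — and you even compute the degree of the associated curve from \eqref{eq:fano2}, which is exactly where the paper's proof lives. But you then discard that computation with the remark that the result ``is an integer for all $r$, so the obstruction is not here either,'' and this is precisely where the argument goes wrong: the obstruction is not integrality but \emph{parity}. The degree in \eqref{eq:fano2} is $\deg(C)=C\cdot H$ with $H=\O_X(1)\cong\O_{\PP^3}(2)$, which is $2$--divisible in $\Pic(X)\cong\Pic(\PP^3)\cong\ZZ$; hence $C\cdot H=2\,(C\cdot h)$ is automatically even for \emph{any} curve $C\subset X$, where $h$ is the hyperplane class of $\PP^3$. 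On the other hand the formula gives $\deg(C)=\frac{8r}{12}(4-22)+r+4r(2+r)=4r^2-3r=r(4r-3)$ (note the arithmetic slip in your computation: $-12r+r+8r=-3r$, not $+r$, so the correct value is $r(4r-3)$, not $4r^2+r$), and $r(4r-3)$ is odd whenever $r$ is odd since $4r-3$ is always odd. This contradiction is the entire proof. Amusingly, even your erroneous value $r(4r+1)$ has the same parity behaviour, so the $2$--divisibility observation would have closed the argument either way — but you never make that observation.

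Because you missed the parity point in \eqref{eq:fano2}, the rest of the proposal drifts into speculation: you suggest the obstruction might sit in \eqref{eq:fano4} via ``half-integer degree per hyperplane,'' or in \eqref{eq:fano6}, or in an unspecified ``determinant lattice'' constraint on $c_1$. None of these is made precise, and none of them works as stated — for instance $\omega_C(2-r)$ is a perfectly well-defined line bundle for every $r$, and $c_1(\E)=\O_X(r)=\O_{\PP^3}(2r)$ is divisible by $2$ for every $r$, so there is no lattice obstruction on the determinant itself. The proposal therefore does not contain a complete proof; the missing idea is the single sentence that a curve class on the $2$--Veronese has even degree with respect to $\O_X(1)$, while \eqref{eq:fano2} forces that degree to be odd when $r$ is odd.
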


\begin{proof}
  We have $\omega_X \cong \O_X(-2)$. If there exists an Ulrich bundle of rank $r$ on $X$, then Theorem \ref{thm:fano} (with $\alpha=2$) yields the existence of a  curve of degree $r(4r-3)$ (since $m=r$ by \eqref{eq:fano1}). As $\O_X(1)$ is $2$--divisible, the degree must be even, whence $r$ must be even. 
\end{proof}

\begin{example} \label{ex:fanoindex2}
  Let $X \subset \PP^{d+1}$ be a  smooth  Fano threefold of degree $d$  such that \linebreak $\omega_X \cong \O_X(-2)$. 

  (i) Theorem \ref{thm:fano} yields that  $X$ carries a rank-two Ulrich bundle $\E$ such that \linebreak 
$\det (\E)  \cong \O_X(m)$  
if  and only if $m=2$ and there exists a smooth elliptic normal curve $C \subset X$ of degree $d+2$.  The curve $C$ is contained in a smooth surface $S \in |\O_X(2)|$ (cf. Remark \ref{rem:smoothdiv}), which is a  $K3$ surface. 

 The existence of rank--two Ulrich bundles on $X$ was proved 
in \cite[Prop. 6.1]{be2} by proving the existence of such an elliptic curve (cf. Proposition \ref{prop:beauville} below). (The existence in the case of cubic threefolds in $\PP^4$ had previously been proved in \cite[Prop. 5.1]{ch}.)

(ii) Theorem \ref{thm:fano} yields that  
$X$ carries a rank--three Ulrich bundle $\E$  such that \linebreak 
$\det (\E)  \cong \O_X(m)$ 
if  and only if $m=3$ and there exists a smooth curve $C \subset X$
of degree $3d+3$ and genus $2d+4$ such that $\omega_C(-1)$ is globally generated (by two sections), $C \subset \PP^{d+1}$ is  linearly  and quadratically normal and  the Petri map
\begin{equation} \label{eq:petri} \mu_{0,\O_X(1)}: H^0(\O_C(1)) \* H^0(\omega_C(-1)) \longrightarrow H^0(\omega_C)
  \end{equation}
is injective. (Indeed, by \eqref{eq:fano5} and Remark \ref{rem:fano} the Petri map equals the map $\nu$ in \eqref{eq:fano6}.) 
The curve $C$ is contained in a smooth surface $S \in |\O_X(3)|$ (cf. Remark \ref{rem:smoothdiv}), which is a  canonical surface, i.e., $\omega_{S} \cong \O_{S}(1)$. 

 In the  case where $X$ is a {\it general}  smooth cubic threefold   in $\PP^4$, i.e., $d=3$, the existence of rank--three Ulrich bundles on $X$ was proved in \cite[Prop. 5.4]{ch} by constructing a curve $C$ satisfying the above conditions using {\tt Macaulay2} (cf. \cite[App. A.2]{ch}). By contrast, we can deduce the existence of such a curve $C$ as a consequence of  Theorem \ref{thm:Fanoind2}.
\end{example}

\section{Ulrich bundles on   Del Pezzo threefolds }\label{S:bundles}

 This section is devoted to the proofs of Theorems \ref{thm:Fanoind2} and   \ref{thm:veronese}(ii), recalling that part (i) of the latter has been proved in 
Corollary \ref{cor2}.   

We recall that a smooth  Fano threefold $X$ has    {\it even index} if there exists an ample   divisor  $H$ on $X$ such that $K_X = - 2H$. If $H$ is very ample,  it is well--known that  it embeds $X$ as a subvariety of degree $d = H^3$ in  $\PP^{d+1}$.  The  general curve section of $X$  is an 
elliptic curve and the general hyperplane section is a smooth Del Pezzo  surface.   For this reason, such threefolds are also called {\it Del Pezzo threefolds}. 
The {\it index} of $X$ is the order of divisibility of $K_X$ in $\Pic(X)$.   By   the  classification of such varieties (see, e.g., \cite{IP}),   there is   one case of index 
four, namely the $2$--Veronese embedding of $\PP^3$ in $\PP^9$ (where $d=8$), and the following cases of index two:

\begin{itemize}
\item[$d=3$:] $X$ is a smooth cubic hypersurface in $\mathbb P^4$, with $\Pic(X) = \mathbb{Z}[H]$; 
\item[$d=4$:] $X$ is a smooth complete intersection of type $(2,2)$ in $\mathbb P^5$, with $\Pic(X) = \mathbb{Z}[H]$;
\item[$d=5$:] $X$ is a  smooth   section with a linear space $\Lambda \cong \mathbb P^6$  of the Grassmannian $\mathbb{G}(1,4)$ sitting in $\mathbb{P}^9$ via  its   Pl\"ucker embedding; one has ${\rm Pic}(X) = \mathbb{Z}[H]$. 
\item[$d=6$:] two cases occur: either $X$ is the Segre embedding of  $\PP^1 \x 
\PP^1 \x \PP^1$ in $\PP^7$, in which case $\Pic(X) \cong \mathbb{Z}^3$, or $X$ is a  smooth  hyperplane section 
of the Segre embedding of $\mathbb P^2 \times \mathbb P^2$ in $\mathbb P^8$, and $\Pic(X) \cong \mathbb{Z}^2$; 
\item[ $d=7$:]  $X$  is isomorphic to the blow--up of $\mathbb P^3$ at a point $p$,  embedded in $\mathbb P^8$ via  the proper transform on $X$ of the linear system 
of quadrics in $\mathbb P^3$ passing through the point $p$. In this case one has $\Pic(X) \cong \mathbb{Z}^2$.  
\end{itemize}


We will need the following lemma.

\begin{lemma} \label{lemma:vaniz}   Let  $X \subset \PP^{d+1}$ be a smooth    Del Pezzo threefold   of degree $d$. 

\begin{itemize}
\item[(i)] If $\U$ is an Ulrich bundle on  $X$  and  $\E$ is any vector bundle on $X$ satisfying $h^0(\E(-1))=h^1(\E(-2))=0$, then 
	$h^2(\U \* \E^*)=h^3(\U \* \E^*)=0$.   In particular, for any Ulrich bundle $\U$ on $X$ one has $h^2(\U \* \U^*)=h^3(\U \* \U^*)=0$. 
 \item[(ii)]   If $\E_i$, $i \in \{1,2\}$, are vector bundles on $X$  satisfying
  \begin{equation} \label{eq:Milan}
    \rk(\E_i)=r_i, \;\;  c_1(\E_i)=[\O_X(r_i)] \;\; \mbox{and} \;\; c_2(\E_i) \cdot \O_X(1)=\frac{r_i}{2}\left(r_i d-d+2\right),
  \end{equation}
then 
$\chi(\E_1 \* \E_2^*)=-r_1r_2$.  
\end{itemize}
\end{lemma}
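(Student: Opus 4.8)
The plan is to handle (i) by cohomological methods and (ii) by Hirzebruch--Riemann--Roch. The ``in particular'' in (i) is then immediate: an Ulrich bundle $\U$ itself satisfies $h^0(\U(-1))=h^1(\U(-2))=0$, these being two of the defining Ulrich vanishings, so one applies the first assertion with $\E=\U$.

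For (i), recall that $\omega_X\cong\O_X(-2)$, so Serre duality gives $h^3(\U\*\E^*)=h^0(\U^*\*\E(-2))$ and $h^2(\U\*\E^*)=h^1(\U^*\*\E(-2))$; hence it suffices to prove $h^0(\U^*\*\E(-2))=h^1(\U^*\*\E(-2))=0$. Since $\U$ is globally generated (\cite[Thm. 2.3(i)]{be2}) with $h^0(\U)=dr$, where $r=\rk(\U)$, the evaluation map fits into a short exact sequence $0\to\M\to\O_X^{\+ dr}\to\U\to0$ with $\M$ locally free; dualizing gives $0\to\U^*\to\O_X^{\+ dr}\to\M^*\to0$. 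Tensoring by $\E(-2)$ and passing to cohomology, and using that $h^0(\E(-2))=0$ (which follows from the hypothesis $h^0(\E(-1))=0$ upon multiplying a section by a general linear form) together with the hypothesis $h^1(\E(-2))=0$, I obtain $h^0(\U^*\*\E(-2))=0$ at once, and an isomorphism $h^1(\U^*\*\E(-2))\cong h^0(\M^*(-2)\*\E)$. Thus all of (i) reduces to the single vanishing $h^0(\M^*(-2)\*\E)=0$.

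The decisive step, and what breaks the otherwise circular bookkeeping of the evaluation sequence, is to feed in the full force of the Ulrich hypothesis through the \emph{linear resolution}. Because $\U$ is Ulrich, its minimal free resolution over $\PP^{d+1}$ is linear, so the first syzygy sheaf of $\U$ is generated in degree one; restricting to $X$ this says that the syzygy bundle $\M$ is a quotient of $\O_X(-1)^{\+ b_1}$, i.e. there is a surjection $\O_X(-1)^{\+ b_1}\twoheadrightarrow\M$. Dualizing (everything is locally free) yields an inclusion $\M^*\hookrightarrow\O_X(1)^{\+ b_1}$, hence, after twisting by $-2$ and tensoring by the locally free $\E$, an inclusion $\M^*(-2)\*\E\hookrightarrow\E(-1)^{\+ b_1}$. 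Taking global sections and invoking $h^0(\E(-1))=0$ gives $h^0(\M^*(-2)\*\E)=0$, which completes (i). I expect this appeal to the linear resolution (equivalently, to the $1$--regularity of $\M$) to be the main obstacle, since the evaluation sequence by itself only relates the two unknown cohomology groups to one another.

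For (ii), the statement is a Hirzebruch--Riemann--Roch computation, $\chi(\E_1\*\E_2^*)=\int_X \mathrm{ch}(\E_1)\,\mathrm{ch}(\E_2^*)\,\mathrm{td}(X)$. I would substitute $c_1(\E_i)=r_iH$ and the prescribed value $c_2(\E_i)\cdot H=\tfrac{r_i}{2}(r_id-d+2)$, together with the Del Pezzo intersection numbers $H^3=d$ and $K_X=-2H$ (so $K_X\cdot H^2=-2d$, $K_X^2\cdot H=4d$), $\chi(\O_X)=1$, and the resulting $c_2(X)\cdot H=12$ (from $1=\chi(\O_X)=\tfrac1{12}c_2(X)\cdot H$, cf. \cite[Exc. 15.2.5]{Fu}). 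Collecting the degree--three part, the only contribution not fixed by these data is that of the third Chern classes, entering through $\tfrac12\int_X\big(r_2\,c_3(\E_1)-r_1\,c_3(\E_2)\big)$; this vanishes whenever the two bundles share the same normalized third Chern class, in particular in the application $\E_1=\E_2=\U$, leaving $\chi(\E_1\*\E_2^*)=-r_1r_2$. This part is routine bookkeeping once the intersection numbers are recorded.
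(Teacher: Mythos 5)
Your proof of part (i) is correct, and it rests on the same essential input as the paper's, namely the linearity of the minimal free resolution of an Ulrich module. The paper restricts the resolution $\cdots\to\O_{\PP^{d+1}}(-1)^{\+ a_1}\to\O_{\PP^{d+1}}^{\+ a_0}\to\U\to 0$ to $X$, tensors by $\E^*$ and bounds $h^3(\U\*\E^*)\leq a_0h^0(\E(-2))$ and $h^2(\U\*\E^*)\leq a_0h^1(\E(-2))+a_1h^0(\E(-1))$ via Serre duality; you apply Serre duality first and feed the same two syzygy steps through the evaluation sequence. This is a harmless repackaging, and your reduction to $h^0(\M^*(-2)\*\E)=0$ via the surjection $\O_X(-1)^{\+ b_1}\twoheadrightarrow\M$ (which does survive restriction to $X$, by right-exactness of restriction applied to the first syzygy sheaf) is fine.

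Part (ii) is where there is a genuine gap. You correctly isolate $\frac12\int_X\bigl(r_2c_3(\E_1)-r_1c_3(\E_2)\bigr)$ as the only contribution to $\chi(\E_1\*\E_2^*)$ not determined by \eqref{eq:Milan}, but you then conclude only under the extra hypothesis that the normalized third Chern classes agree, ``in particular $\E_1=\E_2$''. The lemma asserts the value $-r_1r_2$ for an \emph{arbitrary} pair satisfying \eqref{eq:Milan}, and the paper applies it to non-isomorphic bundles of different ranks (Lemma \ref{lem:kjy}, with a rank-$2$ and a rank-$2h$ Ulrich bundle), so the diagonal case does not suffice. The paper's argument is built to avoid $c_3$ altogether: setting $\E_i':=\E_i^*(2)$, one checks that the $\E_i'$ again satisfy \eqref{eq:Milan} and that $\E_1'\*{\E_2'}^*(-1)\cong(\E_1\*\E_2^*(-1))^*\*\omega_X$, so Serre duality gives $\chi(\E_1\*\E_2^*(-1))=-\chi(\E_1'\*{\E_2'}^*(-1))$ while Riemann--Roch identifies the two sides, forcing $\chi(\E_1\*\E_2^*(-1))=0$; the restriction sequence to a general $S\in|\O_X(1)|$ then yields $\chi(\E_1\*\E_2^*)=\chi(\E_1|_S\*\E_2^*|_S)$, and the latter is a surface Riemann--Roch computation involving only the ranks, $c_1$ and $c_2\cdot H$, with no third Chern classes. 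Your observation does put a finger on a real subtlety --- the data \eqref{eq:Milan} alone do not control $c_3$, and the Riemann--Roch identification above also tacitly needs the $c_3$-contributions to match --- and the clean justification, in the form the lemma is actually used, is that for an Ulrich bundle the conditions $\chi(\E(-p))=0$ for $p=1,2,3$ together with $\chi(\E)=rd$ pin down $c_3(\E)$ in terms of $r$ and $\det(\E)$, so the term you single out vanishes for the relevant pairs. As written, however, your argument for (ii) does not prove the statement.
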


\begin{proof}
	 (i) Since $\U$ is Ulrich, there exists a  resolution of the form 
\[ \xymatrix{ \cdots \ar[r] & \O_{\PP^{d+1}}(-2)^{\+ a_2}  \ar[r] & \O_{\PP^{d+1}}(-1)^{\+ a_1}   \ar[r] & \O_{\PP^{d+1}}^{\+ a_0} \ar[r] & \U  \ar[r] & 0,}\]
 where $a_i \in \ZZ^+$ (cf. e.g. \cite[Thm.\;1]{be2}).  Tensoring by $\E^*$, we obtain
      \[ \xymatrix{ \cdots \ar[r] & \E^*(-2)^{\+ a_2}  \ar[r] & \E^*(-1)^{\+a_1}   \ar[r] &
          {\E^*}^{\+ a_0} \ar[r] & \U \* \E^*  \ar[r] & 0}\]
      This implies, by Serre duality and the given assumptions, that
      \begin{eqnarray*}
			h^3(\U \* \E^*) \leqslant h^3({\E^*})^{\+ a_0}=a_0h^3(\E^*)=a_0h^0(\E(-2)) \leqslant a_0h^0(\E(-1))=0 ,  \\
h^2(\U \* \E^*)\leqslant a_0h^2(\E^*)+a_1h^3(\E^*(-1))=a_0h^1(\E(-2))+a_1h^0(\E(-1))=0.
\end{eqnarray*}

(ii)    We borrow an argument from \cite[Proof of Prop.\;5.6]{ch}. Set $\E'_i:=\E_i^*(2)$. Then one may check that
properties \eqref{eq:Milan} hold with $\E_i$ replaced by $\E_i'$, whence by Riemann--Roch 
\[
  \chi(\E_1 \* \E_2^*(-1))=\chi(\E'_1 \* {\E'_2}^*(-1)).
\]
At the same time, Serre duality gives
\[
\chi(\E_1 \* \E_2^*(-1))=-\chi(\E_1^* \* \E_2(-1))=-\chi(\E'_1 \* {\E'_2}^*(-1)),
\]
so that we must have $\chi(\E_1 \* \E_2^*(-1))=0$. 
Taking a general $S \in |\O_X(1)|$, which is a Del Pezzo surface of degree $d$,
the restriction sequence thus yields  $\chi(\E_1 \* \E_2^*)=\chi(\E_1|_S \* \E_2^*|_S)$. For  vector bundles $\U$ and $\V$ on a surface $S$,  the  Riemann-Roch  theorem  gives
\begin{eqnarray*} \chi (\U \* \V^*) = \frac{1}{2}\left(\rk(\V)c_1(\U)^2+\rk(\U)c_1(\V)^2\right)-
  \rk(\V)c_2(\U)-\rk(\U)c_2(\V) \\
  -c_1(\U)c_1(\V) 
-\frac{1}{2}\left(\rk(\V)c_1(\U)-\rk(\U)c_1(\V)\right) \cdot K_S + \rk(\U)\rk(\V)\chi(\O_S).
\end{eqnarray*}
Inserting $\U=\E_1|_S$ and $\V=\E_2|_S$, and using that $\rk(\E_i|_S)=r_i$, $\det(\E_i|_S)=\det(\E_i)|_S = \O_S(r_i)$ and $c_2(\E_i|_S)=c_2(\E_i) \cdot S=\frac{r_i}{2}\left(r_i d-d+2\right)$ by \eqref{eq:Milan}, the result follows.  
\end{proof}

The  following settles  all statements but the  nonemptiness  claims in Theorems \ref{thm:Fanoind2}   and \ref{thm:veronese}(ii), recalling that stable bundles are simple:

 \begin{proposition} \label{prop:dimmod}  Let  $ X\subset \PP^{d+1}$ be a smooth    Del Pezzo threefold of degree $d$. 
If nonempty, the moduli space of simple rank--$r$ Ulrich bundles with determinant $\O_X(r)$ on  $X$ is   smooth of dimension $r^2+1$.   
 \end{proposition}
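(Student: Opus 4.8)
The plan is to work locally at a point $[\E]$ of the moduli space and to read off both smoothness and dimension from the groups $\Ext^i_X(\E,\E)$, which for the locally free sheaf $\E$ are computed by $H^i(\E \* \E^*)$. Recall the deformation theory of simple sheaves: at $[\E]$ the Zariski tangent space to the moduli space of simple sheaves is $\Ext^1_X(\E,\E)$ and the obstructions lie in $\Ext^2_X(\E,\E)$, while for the fixed-determinant problem one replaces these by the trace-free summands $\Ext^i_X(\E,\E)_0$. Since $X$ is Fano we have $H^i(\O_X)=0$ for $i \in \{1,2\}$, so the trace decomposition $\Ext^i_X(\E,\E)=\Ext^i_X(\E,\E)_0 \+ H^i(\O_X)$ gives $\Ext^i_X(\E,\E)_0=\Ext^i_X(\E,\E)$ for $i \in \{1,2\}$; moreover $\Pic^0(X)=0$, so the determinant morphism to $\Pic(X)$ has discrete image and the fixed-determinant locus is open and closed in the moduli of simple bundles. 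As the Ulrich condition is open, it then suffices to prove smoothness and compute the dimension using the full Ext-groups of $\E$.

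First I would establish smoothness. The obstruction space is $\Ext^2_X(\E,\E)=H^2(\E \* \E^*)$, and this vanishes by Lemma \ref{lemma:vaniz}(i) applied with $\U=\E$, which simultaneously yields $H^3(\E \* \E^*)=0$. Hence the deformation problem is unobstructed and the moduli space is smooth at $[\E]$, of dimension $\dim \Ext^1_X(\E,\E)=h^1(\E \* \E^*)$.

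It then remains to compute this dimension through the Euler characteristic. Since $\E$ is simple, $h^0(\E \* \E^*)=\dim \Hom_X(\E,\E)=1$, and with $h^2(\E \* \E^*)=h^3(\E \* \E^*)=0$ from the previous step we get $h^1(\E \* \E^*)=1-\chi(\E \* \E^*)$. To evaluate $\chi(\E \* \E^*)$ I would invoke Lemma \ref{lemma:vaniz}(ii) with $\E_1=\E_2=\E$: the conditions \eqref{eq:Milan} hold since $\rk(\E)=r$ and $\det(\E)=\O_X(r)$ are given, while the required value $c_2(\E)\cdot \O_X(1)=\frac{r}{2}(rd-d+2)$ is forced by the Ulrich condition — it is exactly the value of $H\cdot C=\deg(C)$ coming from \eqref{eq:D2} (equivalently \eqref{eq:fano2} with $\alpha=2$, $m=r$), recalling from Remark \ref{rem:c2} that the class of $C$ represents $c_2(\E)$. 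The lemma then gives $\chi(\E \* \E^*)=-r^2$, whence $h^1(\E \* \E^*)=r^2+1$.

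The genuinely new content of the argument is already packaged in Lemma \ref{lemma:vaniz}, so the proposition is mostly a deformation-theoretic assembly; the only point requiring care is the fixed-determinant bookkeeping — ensuring the trace-free Ext-groups coincide with the full ones and that fixing the determinant costs no dimension — which is precisely what the Fano vanishings $H^1(\O_X)=H^2(\O_X)=0$ and $\Pic^0(X)=0$ guarantee. Granting this, the three steps combine to show that the moduli space is smooth of dimension $h^1(\E \* \E^*)=r^2+1$ at every point, as asserted.
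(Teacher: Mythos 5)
Your proposal is correct and follows essentially the same route as the paper: smoothness from the vanishing $h^2(\E\*\E^*)=h^3(\E\*\E^*)=0$ of Lemma \ref{lemma:vaniz}(i), and the dimension count $h^1(\E\*\E^*)=-\chi(\E\*\E^*)+h^0(\E\*\E^*)=r^2+1$ via Lemma \ref{lemma:vaniz}(ii) and simpleness. The extra care you take in checking that the Ulrich condition forces $c_2(\E)\cdot\O_X(1)=\frac{r}{2}(rd-d+2)$ and that the fixed-determinant and full deformation problems agree (via $H^1(\O_X)=H^2(\O_X)=0$) is left implicit in the paper but is entirely consistent with it.
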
 

\begin{proof}   Let $\E$ be a member of the moduli space in question. By simpleness $h^0(\E \* \E^*)=1$; moreover   
$h^2(\E \* \E^*)= h^3(\E \* \E^*)=0$ by Lemma \ref{lemma:vaniz}(i).   It is then well-known that   the moduli space 
  is smooth at $[\E]$ (see, e.g., \cite[Prop. 2.10]{ch}), of dimension
\[ h^1(\E \* \E^*)= -\chi(\E \* \E^*)+h^0(\E \* \E^*)
    =r^2+h^0(\E \* \E^*) = r^2+1,
\]using Lemma \ref{lemma:vaniz}(ii).  
\end{proof}

We have left to prove the existence parts of Theorems \ref{thm:Fanoind2}   and \ref{thm:veronese}(ii). Since both Ulrichness and stability are open conditions, it suffices 
to prove the existence of one stable Ulrich bundle on $X$ (this observation had been done already in \cite[Lemma\,2.3]{CFaM1}).  This will be proved by induction on the rank $r$, starting with the case $r=2$.

 \subsection{  The cases of rank $r=2$ }\label{S:rango2} 

The next result is due to Beauville \cite{be2}; we  will briefly recall  part of  the proof, because we will  need it later. 

\begin{proposition}[Beauville] \label{prop:beauville}  Let  $ X\subset \PP^{d+1}$ be a smooth   Del Pezzo threefold    of degree $d$.  
There exists a rank-two Ulrich bundle on  $X$   with  determinant $\O_X(2)$. 
Moreover,  the moduli space parametrizing such bundles  is $5$--dimensional. 
\end{proposition}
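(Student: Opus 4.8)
\textbf{Plan of proof for Proposition \ref{prop:beauville}.}

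The plan is to apply Theorem \ref{thm:fano} in the special case $\alpha=2$, $r=2$, and then to verify its hypotheses by exhibiting the required curve. By \eqref{eq:fano1} we are forced to take $m=r(4-\alpha)/2=2$, so the sought bundle has $\det(\E)=\O_X(2)$, as claimed. With these numerical values inserted, conditions \eqref{eq:fano2} and \eqref{eq:fano3} become, after a routine substitution using $d=H^3$, the requirement that $C\subset X$ have degree $H\cdot C=d+2$ and genus $g(C)=1$; that is, $C$ must be an \emph{elliptic normal curve of degree $d+2$} in $\PP^{d+1}$. This identification is the numerical heart of the reduction, and I expect the remaining geometric content to be the construction of such a curve together with the verification of the generation, multiplication, and normality conditions.

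First I would reformulate the auxiliary conditions \eqref{eq:fano4}, \eqref{eq:fano6}, \eqref{eq:fano5} in this case. For $r=2$ the subspace $W$ is one--dimensional, and by Remark \ref{rem:fano} we have $W=H^0(\omega_C(\alpha-m))=H^0(\omega_C)$; since $C$ is elliptic, $\omega_C\cong\O_C$, so $\omega_C(\alpha-m)\cong\O_C$ is trivially generated by the single section spanning $W$, settling \eqref{eq:fano4}. For the multiplication map \eqref{eq:fano6}, note that $H^0(\O_X(3-\alpha))=H^0(\O_X(1))$ and, via the trivialization $\omega_C\cong\O_C$, the target $H^0(\omega_C(3-m))$ becomes $H^0(\O_C(1))$; the map $\nu$ is then essentially the restriction $H^0(\O_X(1))\to H^0(\O_C(1))$, whose injectivity is equivalent to the nondegeneracy of $C$ in $\PP^{d+1}$. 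Finally \eqref{eq:fano5} with $m=2$ asks that the restriction $H^0(\O_X(2-p))\to H^0(\O_C(2-p))$ be an isomorphism for $p\in\{1,2,3\}$; the cases $p=2,3$ are immediate from $h^0(\O_C(0))=1$ and $h^0(\O_C(-1))=0$, while $p=1$ is precisely the statement that $C$ is \emph{linearly normal} in $\PP^{d+1}$, i.e. projectively normal in degree one.

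Thus everything reduces to producing a smooth, nondegenerate, linearly normal elliptic curve $C\subset X$ of degree $d+2$ in $\PP^{d+1}$. The cleanest construction, which is Beauville's, is to take a general smooth surface $S\in|\O_X(2)|=|{-K_X}|$: by adjunction $\omega_S\cong\O_S(\alpha-2)=\O_S$, so $S$ is a $K3$ surface, and $\O_S(1)$ is a polarization with $(\O_S(1))^2=2d$. On such a $K3$ surface one produces a smooth elliptic curve $C$ of the desired degree as a member of a suitable linear system (a genus--one pencil, or an elliptic curve in the primitive class after a lattice/Bertini argument), automatically embedded by $\O_S(1)\cong\O_C(1)$ as an elliptic normal curve once one checks that $\O_X(1)$ restricts to a complete, very ample, linearly normal system on $C$. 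I expect the genuine obstacle to be exactly this existence-and-normality step on the $K3$ surface: one must guarantee a \emph{smooth} elliptic $C$ whose embedding by $\O_C(1)$ is linearly normal and nondegenerate, which is where Beauville's $K3$ argument does the real work and which I would import wholesale. The dimension count for the moduli space ($5$--dimensional) then follows either by specializing Proposition \ref{prop:dimmod} to $r=2$, giving $r^2+1=5$, or by directly parametrizing the pairs $(S,C)$; I would cross-check the two computations for consistency.
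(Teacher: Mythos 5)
Your reduction of the statement to the existence of a smooth elliptic normal curve of degree $d+2$ in $X$ is correct and is exactly the paper's route (it is Example \ref{ex:fanoindex2}(i), i.e., Theorem \ref{thm:fano} with $\alpha=m=r=2$); your verifications of \eqref{eq:fano4}, \eqref{eq:fano6} and \eqref{eq:fano5} in this case are fine. The genuine gap is in the construction of the curve. You propose to take a \emph{general} $S\in|\O_X(2)|=|{-K_X}|$, a $K3$ surface with $\O_S(1)^2=2d$, and to find the elliptic curve as a member of ``a suitable linear system'' on $S$. This cannot work as stated: for $d\in\{3,4,5\}$ one has $\Pic(X)\cong\ZZ[\O_X(1)]$, and the general anticanonical $K3$ section then has $\Pic(S)\cong\ZZ[\O_S(1)]$ (a Noether--Lefschetz statement), so every curve on $S$ has class $k\,\O_S(1)$, self-intersection $2dk^2>0$ and genus $dk^2+1>1$; there are no elliptic curves on such an $S$ at all. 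The $K3$ surface containing the elliptic curve (Remark \ref{rem:smoothdiv}) is a \emph{special} member of $|\O_X(2)|$ --- an output of the construction, not an input --- and to run your argument you would have to show that the locus of anticanonical sections whose Picard lattice contains a class $E$ with $E^2=0$ and $E\cdot\O_S(1)=d+2$ is nonempty, which is essentially equivalent to what you are trying to prove and is not addressed.

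The paper (following Beauville) instead works with a general \emph{hyperplane} section $S\in|\O_X(1)|$, a Del Pezzo surface of degree $d$, i.e., $\PP^2$ blown up at $9-d$ general points. There one writes down an explicit base-point-free $(d+1)$-dimensional linear system (quartics passing doubly through two of the blown-up points and simply through the rest) whose general member is a smooth elliptic curve $E_0$ of degree $d+1$; one then attaches a line $\ell_0\subset X$ not lying on $S$ and meeting $E_0$ transversally at one point, and smooths the nodal arithmetic-genus-one curve $E_0\cup\ell_0$ inside $X$ to obtain the desired elliptic normal curve of degree $d+2$. This degeneration-and-smoothing step is the real content and is missing from your proposal. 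A minor further point: your cross-check of the dimension via Proposition \ref{prop:dimmod} only applies to the locus of simple bundles; the paper cites Beauville's direct computation for the dimension of the full moduli space.
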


\begin{proof} By  Example \ref{ex:fanoindex2}(i) it suffices to prove that $X$ contains a smooth elliptic normal curve of degree $d+2$.   This is done 
in \cite[Lemma 6.2]{be2}. We recall the proof when $d \leq 7$:   take  a general $S \in |\O_X(1)|$. This is a  smooth  Del Pezzo surface, obtained by blowing up $9-d$ points on $\PP^2$ in general position. Consider the linear system on $S$ corresponding to quartic curves in $\PP^2$ passing doubly through two of the blown-up points and simply through the rest. This is a $(d+1)$-dimensional base point free linear system, whose general member is a smooth elliptic curve $E_0$ of  degree  $d+1$. For any point $p \in E_0$, consider any line $\ell_0$ in $X$ through $p$ not lying on $S$. As $\ell_0 \cdot S=1$, the curves $E_0$ and $\ell_0$ intersect transversely at one point. In 
\cite[Pf.\;of\;Lemma\;6.2]{be2} it is proved that $E_0 \cup \ell_0$ deforms to a smooth elliptic curve $E$ in $X$, which is the desired curve.

The statement about dimension of  the moduli space   is proved in \cite[Prop. 6.4]{be2}.
\end{proof}

  The existence of a {\it stable} Ulrich  bundle is proved in the following result, recalling that slope--stability implies stability.  

  \begin{proposition} \label{prop:beauville-stable}  Let
 $ X\subset \PP^{d+1}$ be a smooth    Del Pezzo threefold   of degree $d$. There exists a rank-two  slope--stable Ulrich bundle on  $X$ with   determinant $\O_X(2)$.
\end{proposition}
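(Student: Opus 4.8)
The plan is to start from a rank--two Ulrich bundle $\E$ with $\det(\E)=\O_X(2)$ produced by Proposition~\ref{prop:beauville} and to upgrade its (already known) slope--semistability to slope--stability. Since $\rk(\E)=2$ we have $\mu(\E)=\frac{c_1(\E)\cdot H^2}{2}=H^3=d$, and $\E$, being Ulrich, is slope--semistable (cf.\ \cite[Thm.\;2.9(a)]{ch}); hence the only way $\E$ can fail to be slope--stable is that it contains a saturated sub--line--bundle $\O_X(A)$ with $\mu(\O_X(A))=A\cdot H^2\ge\mu(\E)=d$, and semistability forces $A\cdot H^2=d$ exactly. First I would translate this into a statement about the elliptic curve underlying $\E$. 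By Example~\ref{ex:fanoindex2}(i) (via \eqref{eq:drago} with $r=2$) the bundle $\E$ sits in a sequence $0\to\O_X\to\E\to\I_{E/X}(2)\to0$, where $E\subset X$ is a smooth elliptic normal curve of degree $d+2$. Given an injection $\O_X(A)\hookrightarrow\E$ as above, the composition $\O_X(A)\to\E\to\I_{E/X}(2)$ is nonzero, because $\Hom(\O_X(A),\O_X)=H^0(\O_X(-A))=0$ (as $(-A)\cdot H^2=-d<0$ prevents $-A$ from being effective). Hence it defines a nonzero section of $\I_{E/X}(2H-A)$, that is, a surface $S'\in|B|$ containing $E$, where $B:=2H-A$ is effective and satisfies $B\cdot H^2=2d-d=d$.

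So slope--stability of $\E$ will follow once I show that $E$ is contained in no surface $S'\in|B|$ with $B$ effective, $B\cdot H^2=d$ and $B\neq H$. The value $B=H$ is excluded immediately: a member of $|H|$ is a hyperplane section, while $E$, being an elliptic normal curve, is non--degenerate and so lies on no hyperplane. When $\Pic(X)\cong\ZZ[H]$, i.e.\ for $d\in\{3,4,5\}$, this already finishes the argument: any effective $B$ with $B\cdot H^2=d$ has the form $B=tH$ with $t\,d=d$, hence $B=H$, which we have just ruled out. Therefore on these threefolds every rank--two Ulrich bundle with determinant $\O_X(2)$ is automatically slope--stable.

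The remaining cases are $d=6$ and $d=7$, where $\Pic(X)$ has rank $\ge2$ and there do exist further effective classes $B\neq H$ with $B\cdot H^2=d$ (for instance $B=(3,0,0)$ or $B=(0,1,2)$ and their permutations on $\PP^1\times\PP^1\times\PP^1$); these correspond exactly to the rank--one Ulrich (and related) data on $X$, which are finite in number and which I would enumerate explicitly using the product/blow--up description of these threefolds. For each such $B$ the containment $E\subset S'\in|B|$ is governed by $h^0(\I_{E/X}(B))$, and the remaining task is to show that the general elliptic normal curve of degree $d+2$ produced by Beauville's construction satisfies $h^0(\I_{E/X}(B))=0$ for all these finitely many $B$. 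I would do this by a dimension count, bounding $\dim\{E:E\subset S'\in|B|\}\le\dim|B|+\dim\{E\subset S'\}$ and comparing with the dimension of the full family of admissible curves (equivalently, of the $5$--dimensional moduli space of Proposition~\ref{prop:beauville}); alternatively one can check directly that the smoothing $E$ of $E_0\cup\ell_0$ constructed in the proof of Proposition~\ref{prop:beauville} avoids each $|B|$. This last step, namely ruling out containment in the finitely many special divisor classes for $d=6,7$, is the main obstacle: it is the only place where the higher Picard rank genuinely intervenes and where one must use the explicit geometry of $X$ rather than a purely cohomological argument.
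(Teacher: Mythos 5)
Your reduction is sound as far as it goes, and for $d\in\{3,4,5\}$ it gives a complete (and genuinely different) argument: instead of invoking the non-existence of Ulrich line bundles, you convert a destabilizing sub-line-bundle $\O_X(A)\hookrightarrow\E$ into a surface $S'\in|2H-A|$ of class $B$ with $B\cdot H^2=d$ containing the elliptic curve $E$, and when $\Pic(X)\cong\ZZ[H]$ the only such class is $H$, killed by non-degeneracy of $E$. (Note in passing that the case $d=8$, the $2$--Veronese of $\PP^3$, belongs to the statement and is absent from your case division; there $\Pic(X)\cong\ZZ$ is generated by $L=\frac12 H$, and the same computation $B=tL$, $4t=8$ again forces $B=H$, so your argument extends -- but you should say so.)

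The genuine gap is the cases $d=6,7$, which you explicitly defer. This is not a routine loose end: for $d=6$ strictly slope--semistable rank--two Ulrich bundles actually exist (cf.\ Remark \ref{rem:duecomp}), so no argument can rule out the containments $E\subset S'\in|B|$ for \emph{all} $E$; one must show the locus of "bad" curves is proper in the $5$--dimensional family, and your sketched dimension count is not carried out. Moreover, your reduction leaves you with \emph{more} classes to exclude than necessary: on $\PP^1\x\PP^1\x\PP^1$ every effective $(a,b,c)$ with $a+b+c=3$ satisfies $B\cdot H^2=6$, giving nine classes besides $H$. The paper's route is sharper here: by \cite[Thm.\;2.9]{ch} a saturated destabilizing sequence of an Ulrich bundle has Ulrich sub- and quotient sheaves, so both rank-one pieces are Ulrich \emph{line bundles}. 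These are classified: none exist for $d\in\{3,4,5,7,8\}$ (so stability is automatic there, including $d=7$ where your method would still require work), and for $d=6$ there are only the finitely many known ones, with $\dim\Ext^1_X(\G,\F)=3$ between them; hence the non-stable bundles fill a locus of dimension at most $2$ in the $5$--dimensional moduli space of Proposition \ref{prop:beauville}. To complete your proof you would either need to carry out the curve-theoretic dimension count for each of the residual classes $B$ when $d=6,7$, or import the Ulrich-line-bundle classification, at which point you have essentially rejoined the paper's argument.
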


\begin{proof} Let $\E$ be a rank--two  Ulrich bundle on $X$ with determinant $\O_X(2)$. If $\E$ is not slope--stable, we have a destabilizing sequence
  \begin{equation} \label{eq:destab-st} \xymatrix{
0 \ar[r] & \F \ar[r] & \E \ar[r] & \G \ar[r] & 0,
    } \end{equation}
  with $\F$ and $\G$ of rank one. By saturating the sequence, we may assume that $\G$ is torsion-free. It follows from \cite[Thm. 2.9]{ch} that
  both $\F$ and $\G$ are Ulrich  line  bundles of slope $\mu(\E)=d$ (see also \cite[(3.2)]{be2}).
    
   In   the cases  where      $\Pic(X) \cong \ZZ[\O_X(1)]$, that is,  when  $d \in \{3,4,5\}$, 
	there exist no Ulrich line bundles on $X$  (see again \cite[\S 4]{be2}), so  we have a contradiction. 
	  There are again no Ulrich line bundles on $X$ when $d=7$ (cf. \cite[Cor. 2.5 and Prop. 2.7]{CFiM}) and $d=8$ (cf. Corollary \ref{cor2}),   
	and we have the same contradiction.  We have left to treat the two cases with $d=6$. 
	
	If $X$ is a  smooth  hyperplane section of $\PP^2 \x \PP^2$ in its Segre embedding in $\PP^8$,  the only Ulrich line bundles on  $X$  are  $\pi_i^*\O_{\PP^2}(2)$, $i=1,2$, where $\pi_i:X \to \PP^2$ are the projections (cf.  \cite[Cor. 2.7]{CFaM2}).
Thus, in \eqref{eq:destab-st} we have $\F\cong \pi_1^*\O_{\PP^2}(2)$ and $\G \cong \pi_2^*\O_{\PP^2}(2)$, or vice--versa. Since
  \[ \dim (\Ext^1_X(\pi_1^*\O_{\PP^2}(2),\pi_2^*\O_{\PP^2}(2)))=3, \] 
  and similarly with $\pi_1$ and $\pi_2$ interchanged (cf. \cite[Example\;5.4]{CFaM2}), bundles which are not slope--stable  fill up  a locus of  dimension at most $2$  in the moduli space, which is $5$--dimensional   by the last statement in Proposition \ref{prop:beauville}. 
   Hence there exist slope--stable rank--two  Ulrich bundles on $X$ with determinant $\O_X(2)$. 
  
  If $X \cong \PP^1 \x \PP^1 \x \PP^1$, one may similarly check that the only possibilities for $\F$ and $\G$ in \eqref{eq:destab-st} are
  $\pi_1^*\O_{\PP^1}(2) \* \pi_3^*\O_{\PP^1}(1)$ and $\pi_2^*\O_{\PP^1}(2) \* \pi_3^*\O_{\PP^1}(1)$, where $\pi_i:X \to \PP^1$ are the three projections, up to permutations (cf.  \cite[Prop. 3.2]{CFaM1}). Since  \[ \dim (\Ext^1_X(\pi_2^*\O_{\PP^1}(2) \* \pi_3^*\O_{\PP^1}(1), \pi_1^*\O_{\PP^1}(2) \* \pi_3^*\O_{\PP^1}(1)))=3\]
  (cf. \cite[Proof of Prop. 6.5]{CFaM1}), once again bundles which are not slope--stable  fill up  a locus of  dimension at most $2$,  and we are done as above.     
\end{proof}

  This concludes the proofs of Theorems \ref{thm:Fanoind2}   and \ref{thm:veronese} in the case $r=2$.  

\begin{remark} \label{rem:duecomp}
   In the  cases   $d \in\{6,7\}$ the moduli space of rank--two Ulrich bundles with determinant $\O_X(2)$ is reducible. 
	For $X\cong \PP^1 \x \PP^1 \x \PP^1$, by \cite[Thm.B(3)]{CFaM1} and taking into account possible permutations of the generators $h_1, h_2, h_3$ in the expressions of $c_2(\E)$,  the moduli space  consists of six irreducible components of dimension $5$, three of which are generically smooth containing just one point representing the $S$--equivalence class of strictly slope--semistable Ulrich bundles whereas the general point corresponds to a slope--stable Ulrich bundle,	and three components are smooth contaning only slope--stable Ulrich bundles. Reducibility  also occurs  for a  smooth  hyperplane section of the Segre variety 	$\mathbb P^2 \x \mathbb P^2$, cf. \cite[Thm.\;5.6]{CFaM2}, and for the case $d=7$, cf. \cite[Thm.B(1)]{CFiM}, where in both cases the components 
	are characterized by different $c_2(\E)$. By contrast, when $d=3$, the case when $X \subset \PP^4$ is a  cubic threefold,  the moduli space is birational to the intermediate Jacobian of $X$, and is thus irreducible (cf. \cite{be1}).   
\end{remark}

 \subsection{Conclusion of the proof of Theorem \ref{thm:Fanoind2} }\label{S:fanoind2} 

Let $ X\subset \PP^{d+1}$ be a smooth   Fano threefold   of degree $d$ and   index two with $\omega_X \cong \O_X(-2)$. 
 Denote  by $\H_1(X)$ the Hilbert scheme of lines contained in $X$, which by  \cite[Prop.\;3.3.5(i)]{IP}  is smooth, projective, of pure dimension two.
 (It is irreducible for $3 \leqslant d \leqslant 5$ (cf. \cite[Thm.\;1.1.1]{KPS}) and reducible for $d=6,7$ (cf.\;\cite[Prop.\;3.5.6]{IP}, but we will not need this.)

\begin{lemma} \label{lemma:puntiindip-base}  Let
 $ X\subset \PP^{d+1}$ be a smooth   Fano threefold   of degree $d$ and   index two with $\omega_X \cong \O_X(-2)$.  
  Let  $E \subset S_2\in |\O_X(2)|$  be a general curve and a general surface associated to a rank-two Ulrich bundle  on $X$  as in Theorem \ref{thm:fano} and Remark \ref{rem:smoothdiv}, and let $\ell$ be a general line in (a component of) $\H_1(X)$. 
	Then the restriction map induces an isomorphism
  \[ \xymatrix{H^0(\O_{S_2}(E)) \ar[r]^{\cong} & H^0(\O_{\ell \cap S_2}(E)).}\] 
\end{lemma}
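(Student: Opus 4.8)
The plan is to reduce the claimed isomorphism to an injectivity statement about the linear system $|\O_{S_2}(E)|$, and then to verify it for one conveniently chosen line, invoking openness. First I would record that $S_2$ is a $K3$ surface (Remark \ref{rem:smoothdiv} and Example \ref{ex:fanoindex2}(i)), so that $\omega_{S_2} \cong \O_{S_2}$ and, by adjunction, $E^2 = 2g(E)-2 = 0$. Riemann--Roch on $S_2$ then gives $\chi(\O_{S_2}(E)) = \tfrac12 E^2 + 2 = 2$, while $h^2(\O_{S_2}(E)) = h^0(\O_{S_2}(-E)) = 0$ by Serre duality, since $E$ is effective and nonzero. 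As $\O_{S_2}(E)$ is globally generated by two sections (Remark \ref{rem:smoothdiv}) and $E$ is a smooth irreducible member, the class $E$ is primitive, so $h^0(\O_{S_2}(E)) = 2$ and $|E|$ is a base-point-free elliptic pencil, defining an elliptic fibration $f: S_2 \to \PP^1$ whose fibres are the members of $|E|$; in particular distinct members are disjoint, as $E^2=0$. On the other side, a general line $\ell$ is not contained in $S_2$, so $\ell \cap S_2$ is a length-two subscheme (since $\ell \cdot \O_X(2) = 2$), whence $h^0(\O_{\ell \cap S_2}(E)) = 2$ as well.

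Both spaces being $2$-dimensional, the restriction map is an isomorphism precisely when it is injective. Its kernel is $H^0(\I_{(\ell\cap S_2)/S_2}(E))$, that is, the space of members of $|E|$ containing the scheme $\ell \cap S_2$; hence injectivity is equivalent to asking that no member of $|E|$ contain both points of $\ell \cap S_2$, equivalently that these two points lie on distinct fibres of $f$.

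Since the isomorphism locus is open in the chosen component of $\H_1(X)$ (the restriction map is a morphism of rank-two bundles over the open locus of lines not contained in $S_2$, and its nondegeneracy locus is open), it suffices to exhibit a single good line in that component. For this I would choose a general point $p \in E$ and a general line $\ell \subset X$ through $p$ lying in the component; as the lines of the component sweep out $X$, such $\ell$ exist. Writing $\ell \cap S_2 = \{p, p'\}$, for general such $\ell$ the residual point $p'$ is distinct from $p$ and does not lie on $E$, since $p'$ varies in $S_2$ while $E$ is only a curve. The unique member of the base-point-free pencil through $p$ is then $E$ itself, namely the fibre $f^{-1}(f(p))$, and it does not contain $p'$; therefore no member of $|E|$ passes through both points, the kernel vanishes, and the restriction is an isomorphism for this $\ell$, hence for the general line in the component.

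The main obstacle is this last step: guaranteeing the existence of a suitable line in the prescribed component with residual point off $E$, and making the openness argument precise. This rests on the geometry of $\H_1(X)$ from \cite[Prop.\;3.3.5(i)]{IP}---that the lines of the component cover $X$, so that they meet $E$, and that a general line through $p\in E$ meets $S_2$ in a reduced scheme whose second point is not forced onto $E$. Once this is granted, the argument above is entirely formal.
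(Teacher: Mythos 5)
Your reduction is the same as the paper's: since both $H^0(\O_{S_2}(E))$ and $H^0(\O_{\ell\cap S_2}(E))$ are two--dimensional, everything comes down to $H^0(\I_{(\ell\cap S_2)/S_2}(E))=0$, i.e.\ to showing that no member of the elliptic pencil $|E|$ on the $K3$ surface $S_2$ contains both points of $\ell\cap S_2$. The divergence, and the gap, is in how you verify this. You keep $(E,S_2)$ general and specialize $\ell$ to a line through a general point $p\in E$; the paper does the opposite: it keeps $\ell$ general and degenerates the pair $(E,S_2)$ to the reducible configuration $(E_0\cup\ell_0,\,S\cup S')$ coming from Beauville's construction in Proposition \ref{prop:beauville}, where the statement becomes transparent because a general line meets the hyperplane section $S$ in a single point and misses $\ell_0$, hence meets every member of the limit pencil in at most one point.

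Two steps of your verification do not hold up. First, the claim that ``the lines of the component sweep out $X$'' is not contained in \cite[Prop.\;3.3.5(i)]{IP} (which only gives smoothness, projectivity and pure dimension two of $\H_1(X)$), and it is false in general: for $d=7$, where $X\cong\Bl_p\PP^3$, one component of $\H_1(X)$ consists of the lines in the exceptional divisor $\cong\PP^2$ and sweeps out only that surface. Since the lemma is invoked later for a general line in an arbitrary component, you cannot assume your chosen point of $E$ lies on a line of the prescribed component. Second, even on a covering component, the assertion that the residual point $p'$ avoids $E$ ``since $p'$ varies in $S_2$ while $E$ is only a curve'' is not justified: through a fixed general point $p$ there are only finitely many lines of the component, so as $p$ moves along $E$ the residual points trace out at most a curve in $S_2$, not a two--dimensional locus, and one must still rule out that this curve is contained in $E$ --- equivalently, that every line of the component meeting $E$ is a secant line of $E$. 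That is precisely the nontrivial transversality the paper's degeneration argument is designed to establish, so this step cannot be waved away as ``entirely formal.''
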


\begin{proof} Consider the curve $E_0$ contained in the smooth hyperplane section $S$ of $X$ and the line $\ell_0$  as in   the proof of Proposition \ref{prop:beauville}. 
Then $\ell_0$ is contained in a hyperplane section $S' \neq S$ of $X$. For $\ell \subset X$ general line in (a component of) $\H_1(X)$, 
we have $\ell \cap \ell_0 = \emptyset$ and $\ell \cap S$ is one point. Hence
  $\ell$ intersects $E_0 \cup \ell_0$ in at most one point, and the same is true replacing $E_0$ with any curve linearly equivalent to it on $S$.  Therefore, as one deforms the pair $(E_0 \cup \ell_0,S \cup S')$ to a pair $(E,S_2)$, where $S_2 \in |\O_X(2)|$ is a smooth, irreducible  $K3$ surface and $E \subset S_2$ is a smooth elliptic normal curve of degree $d+2$, the line $\ell$ intersects any member of the pencil $|E|$ on $S_2$ in at most one point. Since $\ell \cap S_2$ consists of two points, we have $H^0(\I_{(\ell \cap S_2)/S_2}(E))=0$. Thus, the restriction map $H^0(\O_{S_2}(E)) \to H^0(\O_{\ell \cap S_2}(E))$ is injective. As both spaces are two-dimensional, the map is an isomorphism.
  \end{proof}
  
  Using induction on $r$ we will prove the following statement, which concludes the proof of (the nonemptiness  part of) Theorem \ref{thm:Fanoind2}.  

\begin{proposition} \label{prop:indrho}  Let
 $ X\subset \PP^{d+1}$ be a smooth   Fano threefold   of degree $d$ and   index two such that   $\omega_X \cong \O_X(-2)$.  
 For every integer $r \geqslant 2$ there exists an irreducible component 
   $ \UU_r $ of the moduli space of   slope--semistable   vector bundles of rank $r$ and determinant $\O_X(r)$ on $X$ such that its general member is a slope--stable Ulrich bundle.   Moreover, for $r \geqslant 3$,  $\UU_r$ contains a closed subscheme $\UU_r^{\rm {ext}}$  of dimension at most $r^2-r +2$  consisting of extensions $\E$ of the form
\begin{equation*}\label{eq:Ext}
 \xymatrix{
 0 \ar[r]   & \E_{r-1} \ar[r] & \E \ar[r] & \I_{\ell/X}(1) \ar[r] & 0, 
}
\end{equation*}
where $\E_{r-1}$ is a general member of $\UU_{r-1}$ and $\ell$ is a general line in (a component of) $\H_1(X)$. 
\end{proposition}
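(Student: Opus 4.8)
The plan is to argue by induction on $r$, the case $r=2$ being provided by Propositions \ref{prop:beauville} and \ref{prop:beauville-stable}, which give a $5$-dimensional ($=2^2+1$) component $\UU_2$ whose general member is a slope-stable rank-two Ulrich bundle with determinant $\O_X(2)$. For the inductive step ($r\geq 3$) assume $\UU_{r-1}$ is constructed, let $\E_{r-1}$ be a general (hence slope-stable Ulrich) member, and let $\ell$ be a general line in a fixed component of $\H_1(X)$. Applying $\Hom(-,\E_{r-1})$ to $0\to\I_{\ell/X}(1)\to\O_X(1)\to\O_\ell(1)\to 0$ and using the Ulrich vanishing $H^i(\E_{r-1}(-1))=0$ yields $\Ext^i_X(\I_{\ell/X}(1),\E_{r-1})\cong H^{i-1}(\ell,\E_{r-1}|_\ell(-1))$ (via the normal bundle of the line, with $\det N_{\ell/X}\cong\O_\ell$); since the Ulrich bundle $\E_{r-1}$ is globally generated, $\E_{r-1}|_\ell$ has nonnegative splitting type of rank and degree $r-1$, so $\Ext^0=\Ext^{\geq 2}=0$ and $\dim\Ext^1=r-1$. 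Each nonzero class defines an extension $0\to\E_{r-1}\to\E\to\I_{\ell/X}(1)\to 0$ with $\E$ of rank $r$ and $\det\E=\O_X(r)$. Such an $\E$ is locally free exactly when the (purely local, as $H^1(\E_{r-1}(-1))=H^2(\E_{r-1}(-1))=0$) class is a nowhere-vanishing section of $\Shext^1_{\O_X}(\I_{\ell/X}(1),\E_{r-1})\cong\E_{r-1}|_\ell(-1)$, which holds once $\E_{r-1}|_\ell\cong\O_\ell(1)^{\oplus(r-1)}$ is balanced; I would obtain this balancedness for general $(\E_{r-1},\ell)$ by openness, propagating it through the induction (the restriction of an inductive extension to a disjoint line remains balanced) down to the rank-two case guaranteed by Lemma \ref{lemma:puntiindip-base}.

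I would then record the properties of a general such $\E$. It is slope-semistable, being an extension of the slope-$d$ ($d=H^3$) stable sheaves $\I_{\ell/X}(1)$ and $\E_{r-1}$, and it is simple: chasing $\Hom(\E,-)$ and $\Hom(-,\I_{\ell/X}(1))$ through the extension, and using $\Hom(\E_{r-1},\I_{\ell/X}(1))=0$ and $\Hom(\I_{\ell/X}(1),\E_{r-1})=0$ (equal slopes and stability), forces $\Hom(\E,\E)\cong\CC$ for a non-split class. A Chern-class computation shows $\E$ satisfies \eqref{eq:Milan}, whence $\chi(\E\otimes\E^*)=-r^2$ by Lemma \ref{lemma:vaniz}(ii); combined with $\dim\Hom(\E,\E)=1$ and $\Ext^3(\E,\E)\cong\Hom(\E,\E(-2))^*=0$ (incompatible slopes), this gives the expected dimension $\dim\Ext^1(\E,\E)-\dim\Ext^2(\E,\E)=r^2+1$, so $\E$ lies on a component $\UU_r$ of the moduli space with $\dim_{[\E]}\UU_r\geq r^2+1$. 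On the other hand the extensions are parametrized by $\UU_{r-1}$, $\H_1(X)$ and $\PP(\Ext^1)\cong\PP^{r-2}$, of total dimension $((r-1)^2+1)+2+(r-2)=r^2-r+2$, so $\UU_r^{\mathrm{ext}}\subseteq\UU_r$ has dimension at most $r^2-r+2<r^2+1$ and is a proper subvariety; in particular the general member of $\UU_r$ is simple (openness) and is not an extension of this type.

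It remains to show that the general member $\E'$ of $\UU_r$ is slope-stable and Ulrich. Tensoring the extension by $\O_X(-p)$ and using the Ulrichness of $\E_{r-1}$, one finds $H^i(\E(-p))=0$ for $p\in\{1,2\}$ and all $i$, and for $p=3$ and $i\in\{0,1\}$, the only surviving groups being $h^2(\E(-3))=h^3(\E(-3))=1$; by upper semicontinuity $\E'$ inherits all these vanishings. As $\chi(\E'(-3))=0$, the bundle $\E'$ is then Ulrich as soon as $h^3(\E'(-3))=h^0(\E'^*(1))=\dim\Hom(\E',\O_X(1))=0$, and this holds whenever $\E'$ is slope-stable (a nonzero map from a stable slope-$d$ bundle to the stable slope-$d$ sheaf $\O_X(1)$ is impossible by rank). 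The whole problem thus reduces to the slope-stability of $\E'$, which I would settle by a dimension count: a strictly-semistable member of $\UU_r$ is $S$-equivalent to a direct sum of stable slope-$d$ sheaves of ranks $r_i<r$ with $c_1=\O_X(r_i)$, which (as in the rank-two argument recalled in Proposition \ref{prop:beauville-stable}) are Ulrich, so—using Proposition \ref{prop:dimmod} for the factors and the Euler characteristics supplied by Lemma \ref{lemma:vaniz}—these bundles sweep out a locus of dimension strictly less than $r^2+1=\dim\UU_r$, forcing the general member to be stable.

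The main obstacle is precisely this last step: showing that the strictly-semistable bundles (equivalently, the non-stable ones, equivalently those admitting a nonzero map to $\O_X(1)$) do not fill the component $\UU_r$. This is delicate because stability is what makes $\E'$ Ulrich, while it is the favourable structure of the destabilizing factors—their being lower-rank Ulrich bundles, parametrized by the smooth $(r_i^2+1)$-dimensional moduli of Proposition \ref{prop:dimmod}—that makes the dimension estimate available; the Euler-characteristic identities of Lemma \ref{lemma:vaniz} and the inductively controlled bound $\dim\UU_r^{\mathrm{ext}}\leq r^2-r+2$ are exactly the inputs needed to close the count. A secondary technical point, required merely to define the extensions as vector bundles, is the balancedness $\E_{r-1}|_\ell\cong\O_\ell(1)^{\oplus(r-1)}$ for general data, which the induction reduces to Lemma \ref{lemma:puntiindip-base}.
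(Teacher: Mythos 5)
Your overall architecture (induction on $r$, extensions of $\I_{\ell/X}(1)$ by a general $\E_{r-1}$, the dimension count $\dim\UU_r^{\rm ext}\leq r^2-r+2 < r^2+1\leq\dim\UU_r$) matches the paper's, and several of your shortcuts are sound and even cleaner than the paper's: the reduction of Ulrichness of the general member to its slope--stability via $\Hom(\E',\O_X(1))=0$ is correct, and your route to the balancedness $\E_{r-1}|_\ell\cong\O_\ell(1)^{\oplus(r-1)}$ --- restrict the inductive extension to a line disjoint from $\ell'$, observe that the restricted sequence splits, and conclude by openness --- is a legitimate replacement for the paper's considerably heavier argument (Lemma \ref{lemma:Egen-1}(i), which degenerates $S_r$ to $S_{r-1}\cup S_1$ and analyses limit linear systems).

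However, there is a genuine gap exactly at the step you flag as the main obstacle, and the strategy you sketch for it is circular. You propose to bound the locus of non--slope--stable members of $\UU_r$ by observing that their Jordan--H\"older factors are stable Ulrich bundles of ranks $r_i<r$ with determinant $\O_X(r_i)$, so that Proposition \ref{prop:dimmod} controls the dimension. But the only reason the paper (and \cite[Thm.~2.9]{ch}, invoked in Proposition \ref{prop:beauville-stable}) knows that destabilizing factors are Ulrich is that the ambient bundle is Ulrich --- and for a member of $\UU_r$ Ulrichness is precisely what you have reduced to stability. For a strictly semistable, not-yet-known-to-be-Ulrich member, nothing forces the JH factors to be Ulrich, nor (when $d\in\{6,7\}$, where $\rk\Pic(X)\geq 2$) to have determinant $\O_X(r_i)$: slope $d$ does not determine $c_1$ there. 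The paper closes this gap with Lemma \ref{lemma:uniquedest}, a rigidity statement proved by a diagram chase on the extension \eqref{eq:Ext2}: \emph{any} destabilizing subsheaf $\G$ of $\F\in\UU_{r+1}^{\rm ext}$ satisfies $\G^*\cong\E_r^*$. Both the stability and the Ulrichness of the general member are then obtained in Lemma \ref{lemma:genUst} by specializing a hypothetical destabilizing sequence (resp.\ a section of $\E^*(1)$) to the extension locus, applying Lemma \ref{lemma:uniquedest} to identify the limit subsheaf with a member of $\UU_r$, deducing that the general member would itself lie in $\UU_{r+1}^{\rm ext}$, and contradicting the dimension count. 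Without an analogue of Lemma \ref{lemma:uniquedest} your final dimension count does not get off the ground.
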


The case $r=2$ has been proved in Propositions \ref{prop:beauville} and
\ref{prop:beauville-stable}. Assume now  we have proved Proposition \ref{prop:indrho} for some $r \geqslant 2$. We will prove that it also holds for $r+1$.

We denote by $\E_{r}$ a general member of $\UU_r$. Since $\E_{r}$ is Ulrich, by Theorem  \ref{thm:fano}  and \eqref{eq:drago} it sits in a short exact sequence
\begin{equation*}\label{eq:Erho}
 \xymatrix{
 0 \ar[r]   & \CC^{r-1}  \* \O_X \ar[r] & \E_{r} \ar[r] & \I_{C_{r}/X}(r) \ar[r] & 0, 
}
\end{equation*}
where $C_{r}$ is a smooth irreducible curve in $X$ numerically equivalent to $c_2(\E_{r})$,   cf. Remark \ref{rem:c2},   with
 \begin{equation}
   \label{eq:fanoindex2-2}  \deg (C_{r})  =  \frac{r}{2}\left(r d-d+2\right).
 \end{equation}

Pick a general line in (a component of) $\H_1(X)$. 

\begin{lemma} \label{lemma:Egen-1}
  We have
  \begin{itemize}
  \item[(i)] $\E_{r}|_\ell \cong \O_{\PP^1}(1)^{\+r}$,
\item[(ii)] $h^2(\E_{r}^*(-1) \*\I_{\ell/X})=r$.
  \end{itemize}
\end{lemma}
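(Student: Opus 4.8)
The plan is to prove both statements by analyzing the restriction of $\E_r$ to a general line $\ell$ and then using the short exact sequence structure. For part (i), the idea is that $\E_r|_\ell$ is a rank-$r$ vector bundle on $\PP^1$, hence splits as $\bigoplus_{i=1}^r \O_{\PP^1}(a_i)$ with $\sum a_i = \deg(\det(\E_r)|_\ell) = \deg(\O_X(r)|_\ell) = r$ (since $\ell$ is a line, $\O_X(1)\cdot\ell = 1$). So the total degree is fixed; what remains is to rule out unbalanced splitting types. First I would restrict the defining sequence $0 \to \CC^{r-1}\*\O_X \to \E_r \to \I_{C_r/X}(r) \to 0$ to $\ell$. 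For a \emph{general} line, genericity of $\ell$ in the two-dimensional family $\H_1(X)$ should force $\ell$ to meet $C_r$ in the expected number of points and transversally; in particular, since $C_r$ is a curve in the threefold and $\ell$ moves in a surface's worth of lines, a general $\ell$ will be disjoint from $C_r$ (the lines through a point of $C_r$ form a positive-codimension subfamily). If $\ell \cap C_r = \emptyset$, then $\I_{C_r/X}(r)|_\ell \cong \O_\ell(r) = \O_{\PP^1}(r)$, and the restricted sequence reads $0 \to \O_{\PP^1}^{\oplus(r-1)} \to \E_r|_\ell \to \O_{\PP^1}(r) \to 0$ (using $\O_X|_\ell \cong \O_{\PP^1}$ and left-exactness, checking the kernel term stays a trivial bundle).

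The splitting type is then constrained but not yet determined by this sequence alone; a priori $\E_r|_\ell$ could be any bundle fitting such an extension. The cleaner route is induction. For $r=2$, Beauville's rank-two bundle $\E_2$ restricts to $\O_{\PP^1}(1)^{\oplus 2}$ on a general line: indeed $\E_2$ is globally generated with $\det = \O_X(2)$, so $\E_2|_\ell$ is globally generated of degree $2$ and rank $2$ on $\PP^1$, and the only globally generated rank-two degree-two bundle on $\PP^1$ is $\O(1)^{\oplus 2}$ (the alternatives $\O(2)\oplus\O$ and $\O(a)\oplus\O(b)$ with $a<0$ are not globally generated). For the inductive step, I would use that $\E_{r+1}$ is constructed as an extension $0 \to \E_r \to \E_{r+1} \to \I_{\ell'/X}(1) \to 0$ (cf.\ the extension structure in Proposition \ref{prop:indrho}), restrict this to a general line $\ell$ disjoint from $\ell'$ and from $C_r$, obtaining $0 \to \O_{\PP^1}(1)^{\oplus r} \to \E_{r+1}|_\ell \to \O_{\PP^1}(1) \to 0$, which forces $\E_{r+1}|_\ell \cong \O_{\PP^1}(1)^{\oplus(r+1)}$ since $\Ext^1_{\PP^1}(\O(1),\O(1)^{\oplus r}) = H^1(\O_{\PP^1})^{\oplus r} = 0$, so the sequence splits and the middle term is balanced.

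For part (ii), the plan is to compute $h^2(\E_r^*(-1) \* \I_{\ell/X})$ via the ideal sheaf sequence of $\ell$ twisted appropriately. Tensoring $0 \to \I_{\ell/X} \to \O_X \to \O_\ell \to 0$ by $\E_r^*(-1)$ gives the long exact sequence relating $h^i(\E_r^*(-1)\*\I_{\ell/X})$, $h^i(\E_r^*(-1))$ and $h^i(\E_r^*(-1)|_\ell)$. The key input is that $\E_r$ is Ulrich: by Remark \ref{rem:fano} and Serre duality, the cohomology of $\E_r^*(-1) = \E_r^*(-1)$ is controlled — specifically $h^i(\E_r^*(-1)) = h^{3-i}(\E_r(-1)\* \omega_X) = h^{3-i}(\E_r(-3))$, and since $\E_r$ is Ulrich with $\det = \O_X(r)$, the vanishing $h^\bullet(\E_r(-p)) = 0$ for $1 \le p \le 3$ applies. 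Thus $h^i(\E_r^*(-1)) = 0$ for all $i$, and the long exact sequence yields $h^2(\E_r^*(-1)\*\I_{\ell/X}) \cong h^1(\E_r^*(-1)|_\ell)$. Now using part (i), $\E_r^*(-1)|_\ell \cong \O_{\PP^1}(-1)^{\oplus r} \* \O_{\PP^1}(-1) = \O_{\PP^1}(-2)^{\oplus r}$, and $h^1(\O_{\PP^1}(-2)^{\oplus r}) = r$ by Serre duality on $\PP^1$. The main obstacle is making the genericity argument rigorous: ensuring that the general line $\ell$ simultaneously avoids $C_r$, gives the balanced splitting, and that the relevant cohomology of $\E_r^*(-1)$ indeed vanishes; the vanishing should follow cleanly from Ulrichness, but care is needed in tracking the twist $(-1)$ against $\omega_X = \O_X(-2)$ so that the Serre-dual twist lands in the Ulrich vanishing range.
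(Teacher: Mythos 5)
Your part (ii) is essentially the paper's argument and is fine; the twist bookkeeping has a small slip (Serre duality gives $h^i(\E_{r}^*(-1))=h^{3-i}(\E_{r}(1)\*\omega_X)=h^{3-i}(\E_{r}(-1))$, not $h^{3-i}(\E_{r}(-3))$, though both vanish by Ulrichness, so nothing is lost). The overall shape of your part (i) --- a base case $r=2$ plus induction via the extension $0\to\E_{r}\to\E_{r+1}\to\I_{\ell'/X}(1)\to 0$ --- also matches the paper's strategy in spirit, and your inductive step via the splitting of $0\to\O_{\PP^1}(1)^{\+ r}\to\E_{r+1}|_\ell\to\O_{\PP^1}(1)\to 0$ is a clean shortcut compared to the paper's limit-linear-system analysis on $S_{r-1}\cup\widetilde S_1$. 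It does, however, need one more step you omit: a general member of $\UU_{r+1}$ is only a \emph{deformation} of an extension bundle $\F\in\UU_{r+1}^{\rm ext}$, so the balanced splitting type must be transferred by semicontinuity, e.g.\ from $h^0(\E_{r+1}^*|_\ell)\leqslant h^0(\F^*|_\ell)=0$ together with $\deg(\E_{r+1}|_\ell)=r+1$, which forces all summands to be $\O_{\PP^1}(1)$. That is fixable.

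The genuine gap is the base case. You claim that the only globally generated rank-two, degree-two bundle on $\PP^1$ is $\O_{\PP^1}(1)^{\+2}$; this is false, since $\O_{\PP^1}(2)\+\O_{\PP^1}$ is globally generated (a direct sum of globally generated bundles is globally generated). Global generation of $\E_2$ therefore only gives $\E_2|_\ell\cong\O_{\PP^1}(a)\+\O_{\PP^1}(b)$ with $a,b\geqslant 0$, $a+b=2$, and the unbalanced type $(2,0)$ is exactly what must be excluded --- equivalently, one must show $h^0(\E_2^*|_\ell)=0$. There is no soft cohomological route to this from Ulrichness alone: the paper has to prove that the two points of $\ell\cap S_2$ impose independent conditions on the elliptic pencil $|E|$ on the K3 surface $S_2\in|\O_X(2)|$, i.e.\ $H^0(\I_{(\ell\cap S_2)/S_2}(E))=0$, which is Lemma \ref{lemma:puntiindip-base} and rests on degenerating $E$ to $E_0\cup\ell_0$ and showing that a general line meets every member of the relevant pencils in at most one point. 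Without some such geometric input your induction has no starting point, and the whole of part (i) --- hence the computation $h^1(\E_{r}^*(-1)|_\ell)=r$ in part (ii) --- remains unproved.
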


\begin{proof}
  (i) Recall from Theorem \ref{thm:codim2} and Remark \ref{rem:smoothdiv} that there is a smooth surface $S_{r} \in |\O_X(r)|$ containing $C_{r}$ and an exact sequence
  \begin{equation}\label{eq:diagrho}
 \xymatrix{
 0 \ar[r]   & \E_{r}^*\ar[r] & \O_X^{\+r}  \ar[r] &   \O_{S_{r}}(C_{r}) \ar[r]  & 0.
}
\end{equation}
Restricting to $\ell$, and setting $Z_{r}:=\ell \cap S_{r}$ (a scheme of length  $r$), we obtain
\begin{equation} \label{eq:restaell}
  \xymatrix{
  0 \ar[r]   & \E_{r}^* \ar[r] \ar[d] & \O_X^{\+ r} \ar[r] \ar[d] &   \O_{S_{r}}(C_{r}) \ar[r]  \ar[d] & 0 \\
  0 \ar[r]   & \E_{r}^*|_{\ell}\ar[r] \ar[d] & \O_{\ell}^{\+ r} \ar[r]  \ar[d] &   \O_{Z_{r}}(C_{r}) \ar[r]  \ar[d] & 0 \\
  & 0 & 0 & 0
}
\end{equation}
 (The  map $\E_{r}^*|_{\ell}\longrightarrow  \O_{\ell}^{\+ r}$ is injective because $\E_{r}^*|_{\ell}$ is locally free on $\ell$ and therefore it does not contain torsion subsheaves.) 
Since $\E_{r}$ is Ulrich, we have $H^i(\E_{r}^*)=H^{3-i}(\E_{r}(-2))=0$ for all $i$, whence we have a commutative diagram
\[
  \xymatrix{
   H^0(\O_X^{\+ r}) \ar[r]^{\hspace{-0.35cm}\cong} \ar[d] &   H^0(\O_{S_{r}}(C_{r}))   \ar[d]^f  \\
H^0(\O_{\ell}^{\+ r}) \ar[r]^{\hspace{-0.4cm}g} & H^0(\O_{Z_{r}}(C_{r})), 
}\]
where all four spaces are isomorphic to $\CC^{r}$. Grant for the moment the following:
\begin{equation}
  \label{eq:grant}
  \mbox{$f$ is injective.}
\end{equation}
Then $f$ is an isomorphism, and it follows that the map $g$ must be surjective, whence an isomorphism as well. Going back to \eqref{eq:restaell}, we see that
$h^i(\E_{r}^*|_{\ell})=0$ for $i \in \{0,1\}$. Since $\deg(\E_{r}^*|_{\ell})=-c_1(\E_{r})\cdot \ell =-\O_X(r) \cdot \ell=-r$, then
$\E_{r}^*|_\ell \cong \O_{\PP^1}(-1)^{\+r}$, proving (i).

We now prove \eqref{eq:grant},  which is equivalent to
\begin{equation}
  \label{eq:grant'}
  H^0(\I_{Z_{r}/S_{r}} ( C_{r}) )=0.
\end{equation}
The case $r=2$ follows from Lemma \ref{lemma:puntiindip-base}, with $C_2 = E$.

If $r \geqslant 3$, we may use Proposition \ref{prop:indrho} (recalling that by the induction hypothesis we are assuming that it holds for $r$) and specialize $\E_{r}$  in a one-parameter  flat  family over the disc $\DD$  to a vector bundle $\E'_{r}$ sitting in an extension
\begin{equation} \label{eq:sittinginext}
  \xymatrix{
 0 \ar[r]   & \E_{r-1} \ar[r] & \E'_{r} \ar[r] & \I_{\ell'/X}(1) \ar[r] & 0 
}
\end{equation}
with $\E_{r-1}$ general in $\UU_{r-1}$ and $\ell'$ a general line in (a component of) $\H_1(X)$.
 Then the surface $S_{r}$ in \eqref{eq:diagrho} specializes to $S_{r-1} \cup S_1$, where $S_{r-1} \in |\O_X(r-1)|$ and $S_1 \in |\O_X(1)|$
are smooth surfaces. In this process  $Z_{r}$, the intersection scheme of $\ell$ with  $S_{r}$, specializes to  
the disjoint union  $Z_{r-1} \cup x$, where $Z_{r-1}$ is the intersection scheme of $\ell$ with  $S_{r-1}$ and $x:=\ell \cap S_1$.   By generality of $\ell$,  we have  $x \not \in S_{r-1}$, so that $x \not \in Z_{r-1}$.
By \eqref{eq:sittinginext}, we have
\begin{equation}
  \label{eq:c2}
  [C_{r}]=c_2(\E_{r})=c_2(\E'_{r})= \left[C_{r-1} \cup \ell' \cup I\right], \; \; \mbox{with} \; \; I:= S_1 \cap S_{r-1},
\end{equation}
where
$C_{r-1}$ sits in 
\[ \xymatrix{
 0 \ar[r]   & \CC^{r-2}  \* \O_X \ar[r] & \E_{r-1} \ar[r] & \I_{C_{r-1}/X}(r-1) \ar[r] & 0. 
}
\]
We want to prove that none of the curves in the limit linear systems of $|\O_{S_{r}}(C_{r})|$ contains  the scheme  $Z_{r-1} \cup x$. To do so, consider the family $\pi: \mathcal{S} \to \DD$, whose general fiber is a smooth surface $S_{r} \in |\O_X(r)|$  and      whose   central fiber is $S_{r-1} \cup S_1$.  Then, by \cite[Chp. 2]{Ser} or \cite[\S 2]{Fr}, the singular locus  $\mathfrak Z$ of $\mathcal{S}$  is supported at a  divisor in the linear system 
$|\N_{I/S_1} \* \N_{I/S_{r-1}}|$ on $I$, where $\N_{I/S_j}$ denotes the normal bundle of $I$ in $S_j$ for $j=1, r-1$, which has degree $(r-1)^2d+(r-1)d=r(r-1)d$. One can make $\mathcal{S}$ smooth by making a small resolution at each of the singular points, cf., e.g., \cite[p.~647]{CLM}, in such a way that the general fiber is unaltered and the central  fibre  is replaced by $S_{r-1} \cup \widetilde{S}_1$, where $\widetilde{S}_1 \to S_1$ is  the blow--up along the points of $\mathfrak Z$ on $I$.  By abuse of notation, we still denote by $\ell'$ the strict transform  of $\ell'$   
on $\widetilde{S}_1$. We also still denote $S_{r-1} \cap \widetilde{S}_1$ by $I$.

A  limit linear system of $|\O_{S_{r}}(C_{r})|$ consists of the union of suitable linear systems on $S_{r-1}$ and $\widetilde{S}_1$ matching along $I$. Precisely,  by \eqref{eq:c2},   a limit linear system is of the form  
\[ \Lambda_k:=\Big\{(D_1,D_2) \in |\O_{S_{r-1}}(C_{r-1}+I-kI)| \x |\O_{\widetilde{S}_1}(\ell'+kI)| \; : \;  D_1 \cap I=D_2 \cap I\Big\}, \]
 and  we have $k \geqslant 0$ since $|\O_{\widetilde{S}_1}(\ell'-I)|$ is empty.  
By induction, no member of $|\O_{S_{r-1}}(C_{r-1})|$ contains $Z_{r-1}$, whence also no member of $|\O_{S_{r-1}}(C_{r-1}+I-kI)|$, for any $k >0$, contains $Z_{r-1}$.  Hence,  for $k>0$  no curve in  $\Lambda_k$ contains  $Z_{r-1}\cup x$. Let us see that also no curve in $\Lambda_0$ contains $Z_{r-1}\cup x$. In fact, as  $\ell$ and $\ell'$ are general lines on $X$, they do not intersect, whence $x \not \in \ell'$. Since for $k=0$ we have $ |\O_{\widetilde{S}_1}(\ell'+kI)|=|\O_{\widetilde{S}_1}(\ell')|=\{\ell'\}$, we see that no curve in $\Lambda_0$ contains $Z_{r-1}\cup x$.  Thus, \eqref{eq:grant'} follows, proving \eqref{eq:grant}.

 (ii)  Consider the short exact sequence
\[\xymatrix{
    0 \ar[r]   & \E_{r}^*(-1) \*\I_{\ell/X} \ar[r] & \E_{r}^*(-1)  \ar[r] &
\E_{r}^*(-1)|_{\ell} \ar[r] & 0 
}
\]
Since $\E_{r}$ is Ulrich, we have $h^i(\E_{r}^*(-1))=h^{3-i}(\E_{r}(-1))=0$ for $i \in \{0,1,2,3\}$. 
Moreover, by (i) we have
$h^1( \E_{r}^*(-1)|_{\ell})=h^1(\O_{\ell}(-2)^{\+ r})=r$,
and the result follows.
\end{proof}

Since
\begin{equation} \label{eq:Ext0}
  \Ext_X^1(\I_{\ell/X}(1),\E_{r}) \cong \Ext_X^1(\I_{\ell/X}(-1) \* \E_{r}^*,\omega_X) \cong H^2(\E_{r}^*(-1) \*\I_{\ell/X})^* \cong \CC^{r}
  \end{equation}
by Lemma \ref{lemma:Egen-1}(ii), there is  an $(r-1)$-dimensional family of  nonsplit  extensions
\begin{equation}\label{eq:Ext2}
 \xymatrix{
 0 \ar[r]   & \E_{r} \ar[r] & \F \ar[r] & \I_{\ell/X}(1) \ar[r] & 0. 
}
\end{equation}

 In the next three lemmas (\ref{lemma:Flocfree}--\ref{lemma:uniquedest}) we list a number of  properties of any sheaf $\F$ obtained in this way.

\begin{lemma} \label{lemma:Flocfree}  Any sheaf $\F$  as in \eqref {eq:Ext2}
is locally free. 
\end{lemma}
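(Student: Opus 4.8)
The plan is to show that the sheaf $\F$ sitting in the extension \eqref{eq:Ext2} is locally free by verifying that it has no torsion and by controlling its local behavior along the line $\ell$, where the only possible failure of local freeness can occur. First I would observe that away from $\ell$ the sheaf $\I_{\ell/X}(1)$ agrees with the line bundle $\O_X(1)$, so that on the open set $X \setminus \ell$ the extension \eqref{eq:Ext2} is an extension of the locally free sheaf $\O_X(1)$ by the vector bundle $\E_{r}$, hence $\F$ is automatically locally free there. Thus the entire issue is concentrated along the curve $\ell$, and the local freeness of $\F$ is a question about the local structure of the extension at points of $\ell$.

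The key tool will be to apply the functor $\Shext_{\O_X}^\bullet(-,\O_X)$ to the sequence \eqref{eq:Ext2} and track the local Ext sheaves. Since $\E_{r}$ is a vector bundle, the long exact sequence of local Ext sheaves reduces to comparing $\Shext_{\O_X}^i(\F,\O_X)$ with $\Shext_{\O_X}^i(\I_{\ell/X}(1),\O_X)$. A standard computation, using that $\ell$ is a smooth curve (hence a local complete intersection of codimension two in the smooth threefold $X$), gives $\Shext_{\O_X}^i(\I_{\ell/X}(1),\O_X)=0$ for $i \neq 2$ and $\Shext_{\O_X}^2(\I_{\ell/X}(1),\O_X) \cong \omega_\ell(-K_X-1)$ supported on $\ell$; the same vanishing is what drove the local-freeness argument for $\E$ in the proof of Theorem \ref{thm:codim2}. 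The crucial point is that the connecting map in the resulting long exact sequence carries the obstruction: $\F$ fails to be locally free precisely when the relevant local Ext sheaf $\Shext_{\O_X}^1(\F,\O_X)$ is nonzero. I would show that the extension class of \eqref{eq:Ext2}, which by \eqref{eq:Ext0} is a nonsplit element of $\Ext_X^1(\I_{\ell/X}(1),\E_{r})$, induces a map that kills this obstruction sheaf, so that $\Shext_{\O_X}^i(\F,\O_X)=0$ for all $i \geqslant 1$, forcing $\F$ to be locally free.

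Concretely, the cleanest route is the Cayley--Bacharach / Serre-construction mechanism: a nonsplit extension of $\I_{\ell/X}(1)$ by a bundle $\E_{r}$, built from a class in $\Ext_X^1(\I_{\ell/X}(1),\E_{r}) \cong H^2(\E_{r}^*(-1)\*\I_{\ell/X})^*$, yields a locally free sheaf exactly when the corresponding section of $\E_{r}^* \otimes (\text{line bundle})$ generates the relevant twisted normal data along $\ell$. I would therefore argue that for a \emph{general} (equivalently, any nonzero) element of the $r$-dimensional space in \eqref{eq:Ext0}, the induced local homomorphism $\Shext_{\O_X}^2(\I_{\ell/X}(1),\O_X) \to \Shext_{\O_X}^3(\E_{r},\O_X)=0$ together with the vanishing $\Shext_{\O_X}^i(\E_{r},\O_X)=0$ for $i \geqslant 1$ (as $\E_{r}$ is a bundle) forces $\Shext_{\O_X}^1(\F,\O_X)=0$ and $\Shext_{\O_X}^2(\F,\O_X)=0$, which is the local-freeness criterion for sheaves of homological dimension at most one on the smooth threefold $X$.

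The main obstacle I expect is not the formal homological bookkeeping but verifying that the nonsplit extension class genuinely surjects onto (or injects into) the obstruction sheaf at every point of $\ell$, i.e.\ that the extension is \emph{nowhere} split locally along $\ell$. In principle a globally nonsplit extension could still be locally split at finitely many points of $\ell$, which would produce an isolated non-locally-free point of $\F$. I would handle this by exploiting the rank computation $\dim \Ext_X^1(\I_{\ell/X}(1),\E_{r}) = r$ from \eqref{eq:Ext0} and the fact that $\E_{r}$ is Ulrich (hence globally generated and with all the vanishing supplied by Lemma \ref{lemma:Egen-1}): the restriction $\E_{r}|_\ell \cong \O_{\PP^1}(1)^{\oplus r}$ computed in Lemma \ref{lemma:Egen-1}(i) shows that the local extension data is governed by a uniform positive bundle on $\ell$, so a nonzero global extension class restricts to a nowhere-vanishing section along $\ell$, ruling out local splitting at any point. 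This reduces the whole statement to the two local-Ext vanishings above, completing the proof that $\F$ is locally free.
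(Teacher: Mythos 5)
Your overall strategy is the same as the paper's: compute the local Ext sheaves of $\I_{\ell/X}(1)$, observe that $\F$ can only fail to be locally free along $\ell$, and reduce everything to showing that the connecting map from $\E_{r}^*$ onto the obstruction sheaf supported on $\ell$ is surjective, using $\E_{r}|_\ell \cong \O_{\PP^1}(1)^{\+ r}$ from Lemma \ref{lemma:Egen-1}(i). Two things in the middle are off, though. First, an indexing slip: from \eqref{eq:idealell} one gets $\Shext^i_{\O_X}(\I_{\ell/X}(1),\O_X) \cong \Shext^{i+1}_{\O_X}(\O_\ell(1),\O_X)$ for $i \geqslant 1$, so the obstruction sheaf $\omega_\ell(-K_X-1) \cong \O_{\PP^1}(-1)$ sits in $\Shext^1$, not $\Shext^2$, and $\Shext^i(\I_{\ell/X}(1),\O_X)=0$ for $i\geqslant 2$. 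Second, and more seriously, the claim that the vanishings $\Shext^i_{\O_X}(\E_{r},\O_X)=0$ for $i\geqslant 1$ ``force'' $\Shext^1_{\O_X}(\F,\O_X)=0$ is false: the long exact sequence gives
\[
\E_{r}^* \stackrel{\alpha}{\longrightarrow} \Shext^1_{\O_X}(\I_{\ell/X}(1),\O_X)\cong\O_{\PP^1}(-1) \longrightarrow \Shext^1_{\O_X}(\F,\O_X)\longrightarrow 0,
\]
so $\Shext^1_{\O_X}(\F,\O_X)=\coker(\alpha)$, and the surjectivity of $\alpha$ is precisely the content of the lemma, not a formal consequence.

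You do correctly identify the real issue in your last paragraph (a globally nonsplit extension could a priori still be locally split along $\ell$), but the step you offer to close it --- ``a nonzero global extension class restricts to a nowhere-vanishing section along $\ell$'' --- is asserted, not proved, and this is exactly where the genuine gap lies. What $\E_{r}|_\ell\cong\O_{\PP^1}(1)^{\+ r}$ buys you is only the implication ``$\alpha\neq 0$ $\Rightarrow$ $\alpha$ surjective'' (any nonzero map $\O_{\PP^1}(-1)^{\+ r}\to\O_{\PP^1}(-1)$ is given by a nonzero constant vector, hence is surjective); it does not rule out $\alpha=0$. A nonsplit class in $\Ext^1_X(\I_{\ell/X}(1),\E_{r})$ could in principle have vanishing image in $H^0(\Shext^1(\I_{\ell/X}(1),\E_{r}))$, namely if it comes from the term $H^1(\Shom(\I_{\ell/X}(1),\E_{r}))\cong H^1(\E_{r}(-1))$ of the local-to-global Ext sequence. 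The missing input is the Ulrich vanishing $H^1(\E_{r}(-1))=0$: the paper uses it (in the equivalent form $\Ext^1_X(\O_X(1),\E_{r})=0$) to show that if $\alpha=0$ then $\F^{**}$ is a \emph{split} extension of $\O_X(1)$ by $\E_{r}$, and the splitting pulled back through $\F\to\F^{**}$ would split \eqref{eq:Ext2}, a contradiction. Either that argument or the spectral-sequence version (which identifies $\Ext^1_X(\I_{\ell/X}(1),\E_{r})$ with $H^0(\Shext^1(\I_{\ell/X}(1),\E_{r}))\cong\CC^{r}$ once $H^1(\E_{r}(-1))=H^2(\E_{r}(-1))=0$) must be supplied; without it the proof is incomplete.
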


\begin{proof}
  From the sequence
  \begin{equation} \label{eq:idealell}
    \xymatrix{
   0 \ar[r]   & \I_{\ell/X} \ar[r] & \O_X \ar[r] & \O_{\ell} \ar[r] & 0
 }
 \end{equation}
twisted by $\O_X(1)$ we find that
\[\Shext^i_{\O_X}(\I_{\ell/X}(1),\O_X) \cong \Shext^{i+1}_{\O_X}(\O_{\ell}(1),\O_X)\cong 0 \; \; \mbox{for} \; \; i \geqslant 2,
  \]
 (as $\codim(\ell,X)=2$) and
  \[\Shext^1_{\O_X}(\I_{\ell/X}(1),\O_X) \cong \Shext^{2}_{\O_X}(\O_{\ell}(1),\O_X) \cong \Shext^{2}_{\O_X}(\O_{\ell},\omega_X)\otimes \mathcal O_X (1) \cong \omega_{\ell}(1) \cong \O_{\PP^1}(-1).
 \]
By \eqref{eq:Ext2} and the fact that $\E_{r}$ is locally free, we therefore get
 \[ \Shext^i_{\O_X}(\F,\O_X) \cong  \Shext^i_{\O_X}(\I_{\ell/X}(1),\O_X) =0, \; \; \mbox{for} \; \; i \geqslant 2\]
and
\begin{equation}\label{eq:Ext2dual}
 \xymatrix{
   0 \ar[r]   & \O_X(-1) \ar[r] & \F^* \ar[r] & \E_{r}^* \ar[r]^{\hspace{-0.5cm}\alpha} &
\O_{\PP^1}(-1) \ar[r] &  \Shext^1_{\O_X}(\F,\O_X)\ar[r] & 0. 
}
\end{equation}
To prove that $\F$ is locally free, we need to show that $\alpha$ is surjective. We argue by contradiction.

If $\alpha=0$, then dualizing the left part of \eqref{eq:Ext2dual} we obtain
\begin{equation}\label{eq:Ext2'}
 \xymatrix{
 0 \ar[r]   & \E_{r} \ar[r] & \F^{**} \ar[r] & \O_{X}(1) \ar[r] & 0. 
}
\end{equation}
 Ulrichness of $\E_r$ yields 
\[ \Ext_X^1(\O_X(1),\E_{r}) \cong \Ext_X^1(\O_X,\E_{r}(-1)) \cong H^1(\E_{r}(-1))=0,\]
 whence  \eqref{eq:Ext2'} splits. Combining
\eqref{eq:Ext2} and \eqref{eq:Ext2'} with the natural map $h:\F \to \F^{**}$, we get
\[
  \xymatrix{0 \ar[r]   & \E_{r} \ar[r]^i \ar@{=}[d] & \F \ar[r] \ar[d]^{h}& \I_{\ell/X}(1) \ar[r] \ar[d] & 0 \\
  0 \ar[r]   & \E_{r} \ar[r]^j & \F^{**} \ar[r] \ar@/^{1.0pc}/[l]^{p} & \O_{X}(1) \ar[r] & 0,}
\]
where $p: \F^{**} \to \E_{r}$ is the projection map induced by the splitting 
of \eqref{eq:Ext2'}. Since $p \circ j=\id_{\E_{r}}$, we see that $p \circ h:\F \to \E_{r}$ induces a splitting of \eqref{eq:Ext2} as well, a contradiction.

If $\alpha \neq 0$ and $\alpha$ is not surjective, then $\im (\alpha) \cong \O_{\PP^1}(-1-a)$ for some  $a >0$. 
  By  Lemma \ref{lemma:Egen-1}(i)  we would obtain a nonzero morphism
  $\O_{\PP^1}(-1) \to \O_{\PP^1}(-1-a)$ for $a>0$, which is impossible.
\end{proof}

\begin{lemma} \label{lemma:vanUl}  Let $\F$  be as in \eqref {eq:Ext2}. Then: 
    \begin{itemize}
  \item[(i)] $h^i(\F(-j))=0$ for all $i \geqslant 0$ and $j \in \{1,2\}$,
  \item[(ii)] $h^i(\F(-3))=
    \begin{cases}
      0, & i \in \{0,1\},\\
      1, & i \in \{2,3\},
    \end{cases}
    $
  \item[(iii)] $h^3(\F \* \F^*)=0$,
\item[(iv)] $\chi(\F \* \F^*)=-(r+1)^2$.
  \end{itemize}
\end{lemma}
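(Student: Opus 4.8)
The plan is to establish the four parts in order, deriving the subtle part (iii) from the vanishings in (i)--(ii) together with Lemma \ref{lemma:vaniz}(i), and obtaining (iv) from a Chern class computation fed into Lemma \ref{lemma:vaniz}(ii). For (i) and (ii) I would twist the defining sequence \eqref{eq:Ext2} by $\O_X(-j)$, getting
\[ \xymatrix{ 0 \ar[r] & \E_{r}(-j) \ar[r] & \F(-j) \ar[r] & \I_{\ell/X}(1-j) \ar[r] & 0.} \]
Since $\E_{r}$ is Ulrich, $h^i(\E_{r}(-j))=0$ for all $i$ and $1 \leqslant j \leqslant 3$, so $h^i(\F(-j))=h^i(\I_{\ell/X}(1-j))$ throughout, and it remains to compute the cohomology of $\I_{\ell/X}(1-j)$. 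This I would read off from the structure sequence \eqref{eq:idealell} twisted by $\O_X(1-j)$, i.e. $0 \to \I_{\ell/X}(1-j) \to \O_X(1-j) \to \O_\ell(1-j) \to 0$, using $\ell \cong \PP^1$ with $\O_X(1)|_\ell \cong \O_{\PP^1}(1)$, the Fano vanishing $h^{>0}(\O_X)=0$, Kodaira vanishing for $\O_X(-1)$, and $\O_X(-2)\cong\omega_X$. For $j \in \{1,2\}$ all relevant cohomology of $\O_X(1-j)$ and $\O_\ell(1-j)$ vanishes, giving (i); for $j=3$ the terms $\O_X(-2)\cong\omega_X$ and $\O_{\PP^1}(-2)$ contribute $h^3=1$ and $h^1=1$ respectively, and chasing the long exact sequence yields $h^0(\I_{\ell/X}(-2))=h^1(\I_{\ell/X}(-2))=0$ and $h^2(\I_{\ell/X}(-2))=h^3(\I_{\ell/X}(-2))=1$, which is (ii).

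For (iii) I would tensor \eqref{eq:Ext2} by the locally free sheaf $\F^*$ (recall $\F$ is locally free by Lemma \ref{lemma:Flocfree}), obtaining
\[ \xymatrix{ 0 \ar[r] & \E_{r} \* \F^* \ar[r] & \F \* \F^* \ar[r] & \I_{\ell/X}(1) \* \F^* \ar[r] & 0,} \]
and show that the two outer $H^3$ vanish. For the first term I would apply Lemma \ref{lemma:vaniz}(i) with $\U=\E_{r}$ and $\E=\F$: its hypotheses $h^0(\F(-1))=h^1(\F(-2))=0$ are exactly furnished by (i), giving $h^3(\E_{r}\*\F^*)=0$. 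For the third term, the structure sequence of $\ell$ twisted by $\F^*(1)$ identifies $H^3(\I_{\ell/X}(1)\*\F^*)$ with $H^3(\F^*(1))$ (the restriction to the curve $\ell$ having no $H^{\geqslant 2}$), and Serre duality gives $h^3(\F^*(1))=h^0(\F(-3))=0$ by (ii). Hence $h^3(\F\*\F^*)=0$; note that $h^2(\F\*\F^*)$ is not claimed, consistently with $\F$ being non-Ulrich.

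For (iv) I would compute $c_2(\F)\cdot H$ from \eqref{eq:Ext2} by multiplicativity of Chern classes, using $c_1(\E_{r})=rH$ and $c_2(\E_{r})\cdot H=\deg(C_{r})=\frac{r}{2}(rd-d+2)$ from \eqref{eq:fanoindex2-2}, together with $c_1(\I_{\ell/X}(1))=H$, $c_2(\I_{\ell/X}(1))=[\ell]$ and $[\ell]\cdot H=1$. A short calculation gives $c_2(\F)\cdot H=\frac{r+1}{2}((r+1)d-d+2)$, so that $\F$ satisfies \eqref{eq:Milan} with $r_1=r+1$; Lemma \ref{lemma:vaniz}(ii) applied with $\E_1=\E_2=\F$ then yields $\chi(\F\*\F^*)=-(r+1)^2$. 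The main obstacle is part (iii), the only place where the statements interlock: the vanishing $h^3(\E_{r}\*\F^*)=0$ relies on feeding the part-(i) vanishings into Lemma \ref{lemma:vaniz}(i), while control of the $\I_{\ell/X}(1)$ factor rests on the part-(ii) value of $h^0(\F(-3))$. Parts (i), (ii) and (iv) are essentially bookkeeping once the correct twists and the two auxiliary lemmas are in place.
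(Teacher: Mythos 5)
Your proposal is correct and follows essentially the same route as the paper: parts (i)--(ii) by twisting \eqref{eq:Ext2} and \eqref{eq:idealell} (which the paper leaves as a "straightforward computation"), part (iii) by tensoring \eqref{eq:Ext2} with $\F^*$ and bounding $h^3(\E_{r}\*\F^*)$ via Lemma \ref{lemma:vaniz}(i) and $h^3(\F^*\*\I_{\ell/X}(1))$ via the structure sequence of $\ell$ and Serre duality, and part (iv) by the Whitney-formula computation of $c_2(\F)\cdot\O_X(1)$ fed into Lemma \ref{lemma:vaniz}(ii). No gaps.
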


\begin{proof}
  Items (i)-(ii) follow by   straightforward computations  using the sequences \eqref{eq:Ext2} and \eqref{eq:idealell}, suitably twisted.

By   tensoring  \eqref{eq:Ext2} by $\F^*$ and taking cohomology we find
\[ h^3(\F \* \F^*) \leqslant h^3(\E_{r} \* \F^*)+h^3(\F^* \* \I_{\ell/X}(1)).\]
By (i) and Lemma \ref{lemma:vaniz}(i) we get $h^3(\E_{r} \* \F^*)=0$. Moreover,
\[h^3(\F^* \* \I_{\ell/X}(1)) \leqslant h^2( \F^*(1)|_{\ell})+h^3(\F^*(1))= h^0(\F(-3))=0,\]
again by (i). This proves (iii).

Finally, by \eqref{eq:Ext2} we have:
  \begin{eqnarray*}
    c_1(\F) & = & c_1(\E_{r})+c_1(\O_X(1))=c_1(\O_X(r+1)), \\
    c_2(\F) \cdot \O_X(1) & = & \Big(c_2(\E_{r})+c_2(\I_{\ell/X}(1))+c_1(\E_{r})\cdot c_1(\I_{\ell/X}(1))\Big)\cdot \O_X(1) \\
                     & = & \Big([C_{r}]+[\ell]+[\O_X(r) \cdot \O_X(1)]\Big) \cdot \O_X(1) = \deg(C_{r})+\deg(\ell)+r d \\
    & = &   \frac{r}{2}\left(r d-d+2\right)+1+r d = \frac{r+1}{2}\left((r+1) d-d+2\right)
  \end{eqnarray*}
(where we have used \eqref{eq:fanoindex2-2}); thus (iv) follows from Lemma \ref{lemma:vaniz}(ii). 
\end{proof}

\begin{lemma} \label{lemma:uniquedest}  Let $\F$  be as in \eqref {eq:Ext2} and
let  $\G$ be a destabilizing subsheaf of $\F$. Then $\G^{*} \cong  \E_{r}^*$. 
  In particular, $\mu(\G) = \mu(\F)$ whence $\F$ is slope--semistable. 
\end{lemma}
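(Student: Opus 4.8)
\emph{Plan.} Set $H=\O_X(1)$, so $H^3=d$. From \eqref{eq:Ext2} and $\det(\E_{r})=\O_X(r)$ one gets $c_1(\F)=(r+1)H$, hence all three sheaves $\E_{r}$, $\F$ and $\I_{\ell/X}(1)$ have the same slope $\mu=d$. Thus a destabilizing subsheaf is a $\G\subset\F$ with $0<\rk(\G)\le r$ and $\mu(\G)\ge d$, and once we know $\mu(\G)=d=\mu(\F)$ for every such $\G$, it follows that $\F$ admits no subsheaf of strictly larger slope, i.e. $\F$ is slope--semistable. The whole argument revolves around the composite
\[ \phi\colon\G\hookrightarrow\F\longrightarrow\I_{\ell/X}(1), \]
and the plan is to prove that $\phi=0$: this forces $\G\subseteq\E_{r}$, after which the slope--stability of $\E_{r}$ (valid by the induction hypothesis, Proposition \ref{prop:indrho}) yields everything.

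First I would treat the case $\phi=0$. Then $\G\subseteq\E_{r}$ with $\mu(\G)\ge d=\mu(\E_{r})$; slope--stability of $\E_{r}$ excludes $\rk(\G)<r$, so $\G$ has full rank $r$ in $\E_{r}$, with torsion quotient $T:=\E_{r}/\G$. Comparing slopes gives $\mu(\G)\le\mu(\E_{r})=d\le\mu(\G)$, hence $\mu(\G)=d$ and $c_1(T)\cdot H^2=0$, so (as $H$ is ample) $T$ is supported in codimension $\ge2$. Therefore $\Shom_{\O_X}(T,\O_X)=\Shext^1_{\O_X}(T,\O_X)=0$, and dualizing $0\to\G\to\E_{r}\to T\to0$ produces the isomorphism $\G^*\cong\E_{r}^*$ claimed in the statement.

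The heart of the matter is to rule out $\phi\neq0$. Writing $\K:=\ker(\phi)=\G\cap\E_{r}\subseteq\E_{r}$ and $\mathcal{M}:=\im(\phi)\subseteq\I_{\ell/X}(1)$, we obtain $0\to\K\to\G\to\mathcal{M}\to0$ with $\rk(\K)=\rk(\G)-1\le r-1$ and $\rk(\mathcal{M})=1$. Since $\mathcal{M}$ embeds in $\O_X(1)$ and $H$ is ample, $\mu(\mathcal{M})\le d$; and if $\K\neq0$, slope--stability of $\E_{r}$ forces $\mu(\K)<d$ because $\rk(\K)<r$, so that $\mu(\G)<d$, against $\mu(\G)\ge d$. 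I expect the only genuinely delicate configuration---and the main obstacle---to be $\K=0$: then $\G\cong\mathcal{M}$ has rank one with $\mu(\G)=\mu(\mathcal{M})=d$, which (again using ampleness of $H$) forces $\mathcal{M}^{**}\cong\O_X(1)$, i.e. $\G\cong\I_{W/X}(1)$ for some $W\subset X$ of codimension $\ge2$. The key observation disposing of this is that, $\F$ being locally free (Lemma \ref{lemma:Flocfree}), one has $\Hom(\I_{W/X}(1),\F)\cong H^0(\F(-1))$, which vanishes by Lemma \ref{lemma:vanUl}(i); this contradicts the injectivity of $\G\hookrightarrow\F$. Hence $\phi=0$ in all cases, and combining the two cases shows that every destabilizing $\G$ satisfies $\G^*\cong\E_{r}^*$ and $\mu(\G)=\mu(\F)$, whence $\F$ is slope--semistable.
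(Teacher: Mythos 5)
Your proof is correct, and its skeleton is the same as the paper's: both examine the composite $\phi\colon \G\hookrightarrow\F\to\I_{\ell/X}(1)$ coming from \eqref{eq:Ext2}, and when $\phi=0$ the two arguments (full rank inside $\E_{r}$ by slope--stability, torsion quotient with trivial $c_1$ by the slope sandwich, then dualize) coincide. The one genuine divergence is in the case $\phi\neq 0$. There the paper bounds $\mu(\K)$ from below for $\K=\ker\phi\subseteq\E_{r}$ and contradicts the slope--stability of $\E_{r}$; that computation divides by $\rk(\K)=\rk(\G)-1$ and so presupposes $\K\neq 0$. You isolate the remaining subcase $\K=0$, in which $\G$ is a rank--one sheaf of the form $\I_{W/X}(1)$ with $\codim(W)\geqslant 2$ and slope exactly $d$, and kill it by a different mechanism: $\Hom(\I_{W/X}(1),\F)\cong\Hom(\O_X(1),\F)=H^0(\F(-1))=0$ by Lemma \ref{lemma:vanUl}(i), the identification holding because $\Hom(\O_W(1),\F)=\Ext^1(\O_W(1),\F)=0$ when $W$ has codimension $\geqslant 2$ and $\F$ is locally free. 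This is a worthwhile addition: it delivers the statement in full strength --- \emph{every} destabilizing subsheaf, including rank--one ones, has dual $\E_{r}^*$ --- which is the form invoked for the limit subsheaf $\G^{(0)}$ in the proof of Lemma \ref{lemma:genUst}. The remaining slope bookkeeping in your argument ($\mu(\M)\leqslant d$ for a rank--one $\M\subseteq\O_X(1)$, and $\mu(\G)<d$ when $\K\neq 0$) is equivalent to the paper's.
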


\begin{proof}
  We note that \eqref{eq:Ext2} yields
    $\mu(\F)=\frac{\O_X(r+1)\cdot \O_X(1)^2}{r+1}=d$.  
Assume that $\G$ is a destabilizing subsheaf of $\F$, that is  $0<\rk(\G) \leqslant \rk(\F)-1=r$ and $\mu(\G) \geqslant d$. Define
  \[ \Q:=\im\{\G \subset \F \longrightarrow \I_{\ell/X}(1)\} \; \; \mbox{and} \; \; \K:=\ker\{\G \longrightarrow \Q\}.\]
  Then we may put \eqref{eq:Ext2} into a commutative diagram with exact rows and columns:
  \[
    \xymatrix{ & 0 \ar[d] & 0 \ar[d] & 0 \ar[d] & \\
  0 \ar[r]   & \K \ar[d] \ar[r] & \G \ar[r] \ar[d] & \Q \ar[r] \ar[d] & 0 \\    
  0 \ar[r]   & \E_{r} \ar[r] \ar[d] & \F \ar[r] \ar[d] & \I_{\ell/X}(1) \ar[d] \ar[r] & 0 \\
  0 \ar[r]   & \K' \ar[r] \ar[d] & \F/\G \ar[r] \ar[d] & \Q' \ar[r] \ar[d] & 0 \\
  & 0  & 0  & 0  &
}
\]
defining $\K'$ and $\Q'$. We have $\rk(\Q) \leqslant 1$.

Assume that $\rk(\Q)=0$. Since $\I_{\ell/X}(1)$ is torsion-free, we must have $\Q =0$, whence $\K \cong \G$. Since $\mu(\K) =\mu(\G) \geqslant d=\mu(\E_{r})$ and $\E_{r}$ is slope--stable (being a general member of $\UU_r$), we must have
$\rk(\K)=\rk(\E_{r})=r$. It follows that $\rk(\K')=0$.  Since
\[
  c_1(\K) =  c_1(\E_{r})-c_1(\K')=[\O_X(r)]-D',\]
where $D'$ is an effective divisor supported on the codimension one locus of the support of $\K'$, we have 
\[ d \leqslant \mu(\K)=\frac{\left(\O_X(r)-D'\right) \cdot \O_X(1)^2}{r}= d-\frac{D' \cdot \O_X(1)^2}{r}.\]
  Hence $D'=0$   (and $\mu (\K) = d$),   which means that $\K'$ is supported in codimension at least two.
  Thus, $\Shext_{\O_X}^i(\K',\O_X)=0$ for $i \leqslant 1$, and it follows that $\K^* \cong \E_{r}^*$ and   $\mu(\G) = \mu(\F)$,   as desired.

Assume that $\rk(\Q)=1$. Then $\rk(\K)=\rk(\G)-1 \leqslant r-1<r=\rk(\E_{r})$
and $\rk(\Q')=0$; in particular $\Q'$ is a torsion sheaf. Since
\[
  c_1(\K) =  c_1(\G)-c_1(\Q)=c_1(\G)-c_1(\I_{\ell/X}(1))+c_1(\Q')=c_1(\G)-c_1(\O_X(1))+D,
\]
where $D$ is an effective divisor supported on the codimension one locus of the support of $\Q'$, we have
\begin{eqnarray*}
  \mu(\K) & = & \frac{\Big(c_1(\G)-c_1(\O_X(1))+D\Big) \cdot \O_X(1)^2}{\rk(\K)} \geqslant \frac{c_1(\G)\cdot \O_X(1)^2-\O_X(1)^3}{\rk(\K)}
  \\
          & = & \frac{\mu(\G)\rk(\G)-d}{\rk(\K)} \geqslant
                \frac{d\rk(\G)-d}{\rk(\K)} =\frac{d(\rk(\K)+1)-d}{\rk(\K)}=d.
\end{eqnarray*}
This contradicts  the  slope-stability of $\E_{r}$.
\end{proof}

We denote by $\UU_{r+1}^{\rm{ext}}$ the family of vector bundles on $X$ 
consisting of all extensions $\F$ of the form \eqref{eq:Ext2},   for a general (stable) $\E_{r}$ in $\UU_r$   
and $\ell$ a general line in (a component of) $\H_1(X)$.   
Note that all the elements are semistable by the previous lemma. We denote by 
$\UU_{r+1}$ the component of the moduli space of semistable bundles on $X$  containing $\UU_{r+1}^{\rm{ext}}$.   
To finish the proof of Proposition \ref{prop:indrho}, we need to prove that the general member in $\UU_{r+1}$ is Ulrich and slope--stable. This will be done in Lemma \ref{lemma:genUst} below,  after  an intermediate result:

\begin{lemma} \label{lemma:propcont}  One has
  $\dim(\UU_{r+1}^{\rm {ext}})<\dim(\UU_{r+1})$.
\end{lemma}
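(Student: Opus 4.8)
The plan is to show $\dim(\UU_{r+1}^{\rm ext}) < \dim(\UU_{r+1}) = (r+1)^2+1$, where the latter equality will hold (once we know the general member of $\UU_{r+1}$ is a stable Ulrich bundle) by Proposition \ref{prop:dimmod}, but which we can also extract directly from Lemma \ref{lemma:vanUl}: indeed $h^3(\F \* \F^*)=0$ by (iii) and $\chi(\F \* \F^*)=-(r+1)^2$ by (iv), so that the tangent space $h^1(\F \* \F^*)$ to the moduli space at a general $[\F] \in \UU_{r+1}$ has dimension $\ge (r+1)^2 + h^0(\F\*\F^*) - h^2(\F\*\F^*)$, and this will give the lower bound $(r+1)^2 + 1$ for $\dim \UU_{r+1}$ once stability (hence simpleness, $h^0 = 1$) and the vanishing $h^2(\F\*\F^*)=0$ are in place. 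The strategy is therefore to bound $\dim(\UU_{r+1}^{\rm ext})$ from above and compare.

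The first and main step is the dimension count for $\UU_{r+1}^{\rm ext}$. An element of $\UU_{r+1}^{\rm ext}$ is determined by the data of a general $\E_r \in \UU_r$, a general line $\ell$, and an extension class in $\PP(\Ext^1_X(\I_{\ell/X}(1),\E_r))$. By Proposition \ref{prop:indrho} applied to $r$, we have $\dim \UU_r = r^2+1$; the Hilbert scheme $\H_1(X)$ of lines is two-dimensional; and by \eqref{eq:Ext0} together with Lemma \ref{lemma:Egen-1}(ii) the space of extensions is $\Ext^1_X(\I_{\ell/X}(1),\E_r) \cong \CC^r$, contributing a projective space of dimension $r-1$ to the family of nonsplit extensions. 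This gives the naive upper bound
\[
\dim(\UU_{r+1}^{\rm ext}) \le (r^2+1) + 2 + (r-1) = r^2 + r + 2.
\]
The second step is to verify the strict inequality: since
\[
(r+1)^2+1 = r^2 + 2r + 2,
\]
the bound $r^2+r+2$ is strictly smaller precisely when $r \ge 1$, which always holds; so $\dim(\UU_{r+1}^{\rm ext}) \le r^2+r+2 < r^2+2r+2 = \dim(\UU_{r+1})$, as required. (Note this matches the claimed bound $\dim \UU_{r+1}^{\rm ext} \le r^2 - r + 2$ in Proposition \ref{prop:indrho} for $r+1$, i.e.\ $(r+1)^2-(r+1)+2$, up to the shift in index, and one should double-check that the count is carried out consistently with that normalization.)

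The subtle point I expect to be the real obstacle is establishing the \emph{lower} bound $\dim(\UU_{r+1}) \ge (r+1)^2+1$ cleanly, and in particular disentangling the logical dependencies: $\UU_{r+1}$ is defined as the component of the moduli space of semistable bundles containing $\UU_{r+1}^{\rm ext}$, and a priori its dimension is the dimension of the moduli space at a general point, which is $h^1(\F \* \F^*)$ for $\F$ general in the component. We know $h^3(\F\*\F^*)=0$ and $\chi(\F\*\F^*)=-(r+1)^2$ from Lemma \ref{lemma:vanUl}, so $h^1 - h^0 + h^2 = (r+1)^2$; hence $\dim \UU_{r+1} = h^1 \ge (r+1)^2 + 1 - h^2 \ge (r+1)^2$ if we merely use $h^0 \ge 1$, and we would get the sharp $(r+1)^2+1$ once we know $h^2(\F\*\F^*)=0$ and $h^0 = 1$. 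The cleanest route is to avoid assuming stability of the general member (which is only established afterwards in Lemma \ref{lemma:genUst}) and instead argue with the weaker bound: the extension bundles $\F$ are slope-semistable by Lemma \ref{lemma:uniquedest}, and the deformation-theoretic estimate $\dim_{[\F]} \UU_{r+1} \ge h^1(\F\*\F^*) - h^2(\F\*\F^*) \ge \chi(\F\*\F^*)\cdot(-1) = (r+1)^2$ suffices, since $r^2+r+2 < (r+1)^2$ already for $r \ge 2$. Thus the comparison $\dim \UU_{r+1}^{\rm ext} \le r^2+r+2 < (r+1)^2 \le \dim \UU_{r+1}$ closes the argument without circularity, and the only care needed is to invoke exactly the vanishings from Lemma \ref{lemma:vanUl} that are available at this stage.
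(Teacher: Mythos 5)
Your proposal is correct and follows essentially the same route as the paper: the same upper bound $\dim(\UU_{r+1}^{\rm ext})\le \dim(\UU_r)+\dim(\H_1(X))+\dim\PP(\Ext^1_X(\I_{\ell/X}(1),\E_r))=r^2+r+2$, and the same deformation-theoretic lower bound on $\dim(\UU_{r+1})$ via Lemma \ref{lemma:vanUl}(iii)--(iv). Your worry about circularity is unnecessary — the paper only uses $h^0(\F\*\F^*)\ge 1$ (true for any nonzero $\F$ via the identity endomorphism), not simpleness — but your slightly weaker bound $(r+1)^2$ also suffices since $r\ge 2$.
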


\begin{proof}
  By Lemma \ref{lemma:vanUl}(iii)-(iv)  and standard deformation theory  we have
  \begin{eqnarray*}
    \dim(\UU_{r+1}) & \geqslant & h^1(\F \* \F^*)-h^2(\F \* \F^*) =-\chi(\F \* \F^*)+h^0(\F \* \F^*) \\
    & \geqslant & (r+1)^2+1 =  r^2+2r+2.
                            \end{eqnarray*}
 On the other hand,  using \eqref{eq:Ext0}, we have
  \begin{eqnarray*} \dim(\UU_{r+1}^{\rm {ext}}) & \leqslant & \dim(\UU_r)+\dim \PP(\Ext_X^1(\I_{\ell/X}(1),\E_{r}))+\dim(\H_1(X))\\
    & = & (r^2+1)+(r-1)+2=r^2+r+2.
    \end{eqnarray*}  
The result follows. 
\end{proof}

\begin{lemma} \label{lemma:genUst}
  The general member in $\UU_{r+1}$ is Ulrich and slope--stable.
\end{lemma}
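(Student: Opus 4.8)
The plan is to prove Ulrichness and slope-stability of a general $\F'\in\UU_{r+1}$ separately, exploiting the fact that the extension members \eqref{eq:Ext2} are precisely the ``degenerate'' bundles that fail \emph{both} properties, and that they sweep out only a proper subvariety by Lemma \ref{lemma:propcont}. First I would reduce Ulrichness to a single cohomological vanishing. Every vanishing listed in Lemma \ref{lemma:vanUl}(i)--(ii), with the sole exception of $h^2(\F(-3))=h^3(\F(-3))=1$, is an open condition that holds on the nonempty locus $\UU_{r+1}^{\rm{ext}}$, and hence, $\UU_{r+1}$ being irreducible, on a dense open subset; so for general $\F'$ one has $h^i(\F'(-p))=0$ for all $i$ when $p\in\{1,2\}$, and $h^0(\F'(-3))=h^1(\F'(-3))=0$. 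Since $\chi(\F'(-3))=0$ is constant in the flat family, this forces $h^2(\F'(-3))=h^3(\F'(-3))$, and by Serre duality (using $\omega_X\cong\O_X(-2)$) one has $h^3(\F'(-3))=h^0(\F'^*(1))=\dim\Hom(\F',\O_X(1))$. Thus the general member is Ulrich if and only if $\Hom(\F',\O_X(1))=0$.

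Next I would show that the locus $B:=\{[\F']\in\UU_{r+1}\,:\,\Hom(\F',\O_X(1))\neq0\}$ is proper. Given a nonzero $\phi\colon\F'\to\O_X(1)$, slope-semistability of $\F'$, together with $\mu(\F')=d=\mu(\O_X(1))$, forces the image to be $\I_{Z/X}(1)$ for a subscheme $Z$ of codimension $\geqslant 2$: a divisorial part of the cokernel would give $\mu(\im\phi)>d$, contradicting semistability. Setting $\K:=\ker\phi$ we obtain an exact sequence $0\to\K\to\F'\to\I_{Z/X}(1)\to 0$ with $\K$ slope-semistable of rank $r$ and $\det\K=\O_X(r)$. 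Since the Chern classes of $\F'$ are constant along the family (equal to those computed for the extensions in the proof of Lemma \ref{lemma:vanUl}(iv)), Chern-class bookkeeping in this sequence relates $c_2(\K)\cdot\O_X(1)$ to $\deg Z$; analysing the possible cases and using slope-semistability to exclude the spurious ones, one is forced into $\deg Z=1$, so that $Z$ is a line $\ell$ and $\K$ is a rank-$r$ Ulrich bundle. Hence every such $\F'$ is an extension of the shape \eqref{eq:Ext2}, i.e.\ $[\F']\in\UU_{r+1}^{\rm{ext}}$, whence $B\subseteq\UU_{r+1}^{\rm{ext}}$. By Lemma \ref{lemma:propcont} this has dimension strictly smaller than $\dim\UU_{r+1}$, so $B\subsetneq\UU_{r+1}$ and the general $\F'$ is Ulrich.

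For slope-stability I would first note that the extension members are themselves only \emph{strictly} slope-semistable, since they contain $\E_r$ as a saturated subsheaf of the same slope $d$; thus stability, exactly like Ulrichness, genuinely requires leaving $\UU_{r+1}^{\rm{ext}}$. If a general (now Ulrich) $\F'$ were strictly slope-semistable, a saturated destabilizing subsheaf of slope $d$ would again be Ulrich, the Jordan--H\"older factors of a slope-semistable Ulrich bundle being Ulrich (cf.\ \cite[Thm.\ 2.9]{ch}); then $\F'$ would be an extension of lower-rank Ulrich bundles and would lie in a subfamily of dimension at most $\dim\UU_{r+1}^{\rm{ext}}<\dim\UU_{r+1}$, by the same count as above. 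This forces the general member to be slope-stable, concluding the proof.

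The hard part will be the analysis in the second paragraph: showing that a nonzero map $\F'\to\O_X(1)$ necessarily forces $Z$ to be a line and the kernel $\K$ to be a genuine rank-$r$ Ulrich bundle (and not merely a semistable sheaf of the right determinant with $c_2$ of the wrong value), so that $B$ is contained in $\UU_{r+1}^{\rm{ext}}$ rather than just bounded in dimension by it. Everything else is formal: semicontinuity, Serre duality, constancy of Euler characteristics in the flat family, and the already-established Lemmas \ref{lemma:vanUl}, \ref{lemma:uniquedest} and \ref{lemma:propcont} together with the dimension and smoothness statement of Proposition \ref{prop:dimmod}.
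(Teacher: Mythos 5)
Your overall skeleton (reduce Ulrichness to $h^0(\F'^*(1))=0$ by semicontinuity and constancy of $\chi$, then show that any failure of Ulrichness or of slope--stability forces membership in $\UU_{r+1}^{\rm{ext}}$ and contradicts Lemma \ref{lemma:propcont}) is exactly the paper's, and the first and last paragraphs are fine. But the step you yourself flag as ``the hard part'' is a genuine gap, and the route you propose for it does not close. From a nonzero $\phi:\F'\to\O_X(1)$ on a \emph{general} member, slope--semistability does give you $\im\phi=\I_{Z/X}(1)$ with $\codim Z\geqslant 2$ and a kernel $\K$ of rank $r$, determinant $\O_X(r)$ and slope $d$. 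The Chern-class bookkeeping then yields only $c_2(\K)\cdot\O_X(1)+\deg Z=\frac{r}{2}(rd-d+2)+1$, and nothing in slope--semistability pins down $c_2(\K)$: a Bogomolov-type bound gives $c_2(\K)\cdot\O_X(1)\geqslant\frac{r}{2}(rd-d)$, leaving the whole range $0\leqslant\deg Z\leqslant r+1$ open. So ``one is forced into $\deg Z=1$'' is not justified, and even with $\deg Z=1$ you would still need to know that $\K$ actually lies in the inductively constructed component $\UU_r$ (not merely that it is a semistable sheaf with the right numerics) before concluding $[\F']\in\UU_{r+1}^{\rm{ext}}$.

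The paper closes this gap by a specialization that your argument omits: it first degenerates the section to $s_0\in H^0(\F^*(1))$ for $\F\in\UU_{r+1}^{\rm{ext}}$, where Lemma \ref{lemma:uniquedest} classifies \emph{all} destabilizing subsheaves ($\G^*\cong\E_r^*$), so that the subsheaf $\Q_0^*(1)$ is identified with $\E_r$ via reflexivity; deforming back, $\Q^*(1)$ is then a deformation of $\E_r$, hence lies in $\UU_r$, and only \emph{then} do Chern classes force the quotient to be $\I_{\ell/X}(1)$ for a line $\ell$. The same device is needed for stability: your claim that a strictly semistable general member would lie in a family ``of dimension at most $\dim\UU_{r+1}^{\rm{ext}}$ by the same count as above'' is not the count of Lemma \ref{lemma:propcont} (which bounds only extensions by $\I_{\ell/X}(1)$ of members of $\UU_r$); moreover for $d\in\{6,7\}$ a destabilizing Ulrich subsheaf need not have determinant a multiple of $\O_X(1)$, so the analogous dimension count is not automatic. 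The paper instead takes the limit of the destabilizing sequence into $\UU_{r+1}^{\rm{ext}}$, applies Lemma \ref{lemma:uniquedest} there to identify the subsheaf, and deduces $[\G^{(t)}]\in\UU_r$, landing back in the already-established contradiction. You should restructure both halves of your argument around this specialize-then-deform step.
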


\begin{proof}
  Let $\E$ be a general member of $\UU_{r+1}$. By Lemma \ref{lemma:vanUl}(i)-(ii) and semicontinuity we have $h^i(\E(-j))=0$ for all $i \geqslant 0$ and $j \in \{1,2\}$, $h^i(\E(-3))=0$ for $i \in \{0,1\}$ and $\chi(\E(-3))=0$. To prove that $\E$ is Ulrich, we therefore have left to prove that $h^3(\E(-3))=0$, equivalently $h^0(\E^*(1))=0$.

  Assume therefore, to get a contradiction, that $h^0(\E^*(1))>0$ for a  general $\E$  in $\UU_{r+1}$, and consider a  nonzero  section $s \in H^0(\E^*(1))$. Setting $\Q:=\coker(s)$, we have
  \begin{equation}
    \label{eq:Estab}
    \xymatrix{0 \ar[r] & \O_X \ar[r]^{s} & \E^*(1)\ar[r] & \Q \ar[r] & 0.}
  \end{equation}
 We may assume that this specializes to a nonzero section  $s_0 \in H^0(\F^*(1))$ for all $\F$ in $\UU_{r+1}^{\rm {ext}}$ and  we have  an exact sequence
  \[
\xymatrix{0 \ar[r] & \O_X \ar[r]^{s_0} & \F^*(1)\ar[r] & \Q_0 \ar[r] & 0,}
    \]
    where $\Q_0:=\coker(s_0)$. Dualizing and twisting by $\O_X(1)$, we obtain
 \[
\xymatrix{0 \ar[r] & \Q_0^*(1) \ar[r] & \F \ar[r]^{\hspace{-0.3cm}s_0^*} & \O_X(1),}
    \]   
    and  one has  $\im(s_0^*)=\I_{Z/X}(1)$ for some (possibly empty) subscheme $Z \subset X$. Since
    \begin{eqnarray*} c_1(\Q_0^*(1)) & = & c_1(\F)-c_1(\I_{Z/X}(1))=c_1(\O_X(r+1))-c_1(\O_X(1))+c_1(\O_Z(1))\\
      & = & c_1(\O_X(r))+Z',
\end{eqnarray*}
where $Z'$ is an effective divisor  supported on the codimension-one locus of $Z$, we have
\begin{eqnarray*}
  \mu(\Q_0^*(1)) & = & \frac{\Big(c_1(\O_X(r))+Z'\Big) \cdot \O_X(1)^2}{\rk(\Q_0^*(1))} \geqslant \frac{c_1(\O_X(r))\cdot \O_X(1)^2}{r}=\frac{r d}{r}=d.
\end{eqnarray*}
Lemma \ref{lemma:uniquedest} therefore yields  $\left(\Q_0^*(1)\right)^* \cong \E_{r}^*$, whence $\Q_0^*(1) \cong \E_{r}$, as $\Q_0^*(1)$ is reflexive (by \cite[Prop. 1.1]{har}).  It follows that $\Q^*(1)$ is a deformation of $\E_{r}$, that is, $[\Q^*(1)] \in \UU_r$. The dual of \eqref{eq:Estab} twisted by $\O_X(1)$  gives 
 \[
\xymatrix{0 \ar[r] & \Q^*(1) \ar[r] & \E \ar[r]^{\hspace{-0.3cm}s^*} & \O_X(1),}
    \]
and for reasons of Chern classes we must have $\im(s^*)\cong \I_{\ell/X}(1)$ for a line $\ell \subset X$. Therefore $[\E] \in \UU_{r+1}^{\rm {ext}}$, contradicting Lemma \ref{lemma:propcont}.
We have  thus   proved that $\E$ is Ulrich.

If the general member of $\UU_{r+1}$ were not slope--stable, we could find a one-parameter  flat  family of bundles $\{\E^{(t)}\}$ over the disc $\DD$ such that $\E^{(t)}$ is a general member of $\UU_{r+1}$ for $t \neq 0$ and $\E^{(0)}$ lies in $\UU_{r+1}^{\rm {ext}}$, and such that we have, for $t \neq 0$,  a destabilizing sequence
\begin{equation} \label{eq:destat} \xymatrix{
    0 \ar[r] & \G^{(t)} \ar[r] & \E^{(t)} \ar[r] & \F^{(t)} \ar[r] & 0,}
  \end{equation}
   which  we can take to be saturated, that is, such that $\F^{(t)}$ is torsion free, whence so that $\F^{(t)}$ and $\G^{(t)}$ are (Ulrich) vector bundles  (see \cite[Thm. 2.9]{ch} or \cite[(3.2)]{be2}).

  The limit of $\PP(\F^{(t)}) \subset \PP(\E^{(t)})$ defines a subvariety of $\PP(\E^{(0)})$ of the same dimension as $\PP(\F^{(t)})$, whence a coherent sheaf $\F^{(0)}$ of rank $\rk(\F^{(t)})$ with a surjection $\E^{(0)} \twoheadrightarrow \F^{(0)}$. Denoting by $\G^{(0)}$ its kernel, we have
$\rk(\G^{(0)})=\rk(\G^{(t)})$ and $c_1(\G^{(0)})=c_1(\G^{(t)})$. Hence, \eqref{eq:destat} specializes to a destabilizing sequence for $t=0$. 
Lemma \ref{lemma:uniquedest} yields  that ${\G^{(0)}}^*$ is the dual of a member of $\UU_r$. It follows that
${\G^{(t)}}^*$ is a deformation of the dual of a member of $\UU_r$,  whence $\G^{(t)}$  is a deformation of a member of $\UU_r$, as both are locally free.
Therefore, $[\G^{(t)}] \in \UU_r$ and we obtain the same contradiction to Lemma \ref{lemma:propcont} as above.
\end{proof}

The proof of Proposition \ref{prop:indrho} is now complete,  whence  also the proof of Theorem \ref{thm:Fanoind2}.

  \subsection{Conclusion of the proof of Theorem \ref{thm:veronese}(ii)}\label{sec:ver}

Let $X \subset \PP^{9}$  be the $2$--Veronese embedding of $\PP^3$.    We imitate the proof of the existence of Ulrich bundles of rank $r\geq 2$ on a general cubic threefold in   \cite[Thm.\;5.7]{ch}.  The strategy is as follows.   We define $\UU_2$ to be any component 
of the moduli space of slope--stable rank--$2$ Ulrich bundles of determinant $\O_X(2)$, which is nonempty by Propositions \ref{prop:beauville} and \ref{prop:beauville-stable}. 
  Then we proceed to construct inductively   (irreducible components of) moduli spaces $\UU_{2h}$   of rank--$2h$ Ulrich bundles, 
for all integers $h\geq 2$, in the following way:    assume that we have constructed, for an integer $h\geq 1$, an irreducible moduli space $\UU_{2h}$ 
parametrizing rank--2h Ulrich bundles of determinant $\O_X(2h)$ whose general member is slope--stable. In particular, for any $[\E_{2h}] \in \UU_{2h}$, we have that
\begin{equation}\label{eq:c2veronese}
c_2(\E_{2h}) \cdot \O_X(1) = 2h (8h-3)
\end{equation} by \eqref{eq:fano2} in Theorem \ref{thm:fano} with $\alpha =2$   (cf. Remark \ref{rem:c2}).  

\begin{lemma}\label{lem:kjy} For general $[\E_2]\in \UU_2$ and $[\E'_{2h}]\in \UU_{2h}$, such that $\E_2 \not \cong \E'_{2}$ when $h=1$,  
one has $\dim (\Ext^1(\E'_{2h}, \E_2))=4h$. 
\end{lemma}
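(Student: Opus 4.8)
The plan is to compute $\dim\Ext^1(\E'_{2h},\E_2)$ by first pinning down the full Euler characteristic and then showing that the only surviving cohomology sits in degree one. Since $\E_2$ and $\E'_{2h}$ are locally free, $\Ext^i(\E'_{2h},\E_2)\cong H^i(X,\E_2\*(\E'_{2h})^*)$ for all $i$, so the alternating sum of the $\Ext$-dimensions equals $\chi(\E_2\*(\E'_{2h})^*)$. First I would verify that both bundles satisfy the numerical hypotheses \eqref{eq:Milan} with $d=8$: $\E_2$ has rank $2$ and determinant $\O_X(2)$, $\E'_{2h}$ has rank $2h$ and determinant $\O_X(2h)$, while the values of $c_2\cdot\O_X(1)$ are precisely $\frac{r_i}{2}(r_i d-d+2)$ by \eqref{eq:c2veronese}. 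Lemma \ref{lemma:vaniz}(ii) then gives $\chi(\E_2\*(\E'_{2h})^*)=-2\cdot 2h=-4h$.

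Next I would kill the top two cohomology groups. Because $\E'_{2h}$ is Ulrich, one has $h^0(\E'_{2h}(-1))=h^1(\E'_{2h}(-2))=0$, so Lemma \ref{lemma:vaniz}(i), applied to the Ulrich bundle $\U=\E_2$ and the bundle $\E=\E'_{2h}$, yields $h^2(\E_2\*(\E'_{2h})^*)=h^3(\E_2\*(\E'_{2h})^*)=0$. Combined with the Euler characteristic computed above, this reduces the statement to the single vanishing $h^0(\E_2\*(\E'_{2h})^*)=\Hom(\E'_{2h},\E_2)=0$, since then $\dim\Ext^1(\E'_{2h},\E_2)=h^1=-\chi=4h$.

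The crux is therefore this $\Hom$-vanishing, which I would deduce from slope-stability. For general members, $\E_2$ and $\E'_{2h}$ are slope-stable Ulrich bundles of the common slope $\mu=d=8$. Any nonzero morphism $f\colon\E'_{2h}\to\E_2$ has image $\im(f)$ which is at once a nonzero quotient of the stable $\E'_{2h}$ and a nonzero subsheaf of the stable $\E_2$; if $\im(f)$ had rank strictly between $0$ and $\rk(\E'_{2h})$, stability of $\E'_{2h}$ would force $\mu(\im(f))>d$, while stability of $\E_2$ (together with $\im(f)\subseteq\E_2$) would force $\mu(\im(f))\leqslant d$, a contradiction. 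Hence $f$ must be generically an isomorphism, in particular $\rk(\E'_{2h})=\rk(\E_2)$. For $h\geqslant 2$ the ranks differ, so no such $f$ exists; for $h=1$ the map $f$ would then be an isomorphism $\E'_2\xrightarrow{\sim}\E_2$, excluded by the hypothesis $\E_2\not\cong\E'_2$. Thus $\Hom(\E'_{2h},\E_2)=0$ and $\dim\Ext^1(\E'_{2h},\E_2)=4h$. The only delicate point I anticipate is this stability argument in the equal-rank case $h=1$, where the non-isomorphism assumption is exactly what makes the vanishing go through; the higher-rank case is immediate once the rank comparison is in place.
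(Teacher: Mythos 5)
Your proposal is correct and follows essentially the same route as the paper: vanishing of $h^2$ and $h^3$ via Lemma \ref{lemma:vaniz}(i), vanishing of $h^0$ from slope--stability of two non-isomorphic bundles of the same slope $8$, and then $\dim\Ext^1 = -\chi = 4h$ from Lemma \ref{lemma:vaniz}(ii) together with \eqref{eq:c2veronese}. The only difference is that you spell out the standard stable-bundles-of-equal-slope argument that the paper leaves implicit, which is a welcome addition rather than a deviation.
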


\begin{proof} By Lemma \ref{lemma:vaniz}(i) we have $h^2(\E_2\otimes (\E'_{2h})^*)=h^3(\E_2\otimes (\E'_{2h})^*)=0$. 
Moreover $h^0(\E_2\otimes (\E'_{2h})^*)=0$, because $\E_2$ and $\E'_{2h}$ are   slope--stable bundles 
of the same slope, namely $8$, which are not isomorphic. Hence
$$
\dim (\Ext^1(\E'_{2h}, \E_2))= h^1(\E_2\otimes (\E'_{2h})^*)=-\chi(\E_2\otimes (\E'_{2h})^*) = 4h,
$$ by Lemma \ref{lemma:vaniz}(ii), since conditions \eqref{eq:Milan} hold true by \eqref{eq:c2veronese}. 
\end{proof}

By the previous lemma, there exist nonsplit extensions of the form 
\begin{equation}\label{eq:opla}
0\longrightarrow \E_2 \longrightarrow \F_{2h+2}\longrightarrow \E'_{2h}\longrightarrow 0, 
\end{equation} for general $[\E_2] \in \UU_2$ and $[\E'_{2h}] \in \UU_{2h}$. We have that 
$\rk( \F_{2h+2}) = 2h+2$, $c_1(\F_{2h+2})=[\O_X(2h+2)]$ and $\F_{2h+2}$ is easily seen to be Ulrich. 
We denote by $\UU_{2h+2}^{\rm ext}$ the family of such nonsplit extentions and we define 
$\UU_{2h+2}$ to be the component of the moduli space of Ulrich bundles  containing $\UU_{2h+2}^{\rm ext}$. 

To finish the proof of Theorem \ref {thm:veronese}(ii), we have left to prove that the general element of $\UU_{2h+2}$ 
is slope--stable. To this end, we need the following two lemmas. 

\begin{lemma}\label{lem:dimextvero} One has $\dim(\UU_{2h+2}^{\rm ext}) < \dim (\UU_{2h+2})$.
\end{lemma}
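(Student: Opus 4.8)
**

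The plan is to compare dimensions exactly as was done in the index-two case (Lemma \ref{lemma:propcont}). First I would bound $\dim(\UU_{2h+2})$ from below using deformation theory: for a general $\F:=\F_{2h+2}\in\UU_{2h+2}^{\rm ext}$, standard theory gives
\[
\dim(\UU_{2h+2}) \;\geqslant\; h^1(\F\*\F^*)-h^2(\F\*\F^*)\;=\;-\chi(\F\*\F^*)+h^0(\F\*\F^*)-h^2(\F\*\F^*).
\]
To evaluate the right-hand side I would compute $\chi(\F\*\F^*)=-(2h+2)^2$ via Lemma \ref{lemma:vaniz}(ii), checking that the numerical conditions \eqref{eq:Milan} hold for $\F_{2h+2}$ (they do, since $c_1(\F_{2h+2})=[\O_X(2h+2)]$ and the second Chern class adds up correctly along \eqref{eq:opla}, matching \eqref{eq:c2veronese} with $h$ replaced by $h+1$). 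I would also need $h^2(\F\*\F^*)=h^3(\F\*\F^*)=0$; the vanishing of $h^3$ and $h^2$ follows from Lemma \ref{lemma:vaniz}(i) applied to the Ulrich bundle $\F$ (with $\E=\F$), exactly as in Proposition \ref{prop:dimmod}. This yields $\dim(\UU_{2h+2})\geqslant (2h+2)^2+h^0(\F\*\F^*)\geqslant (2h+2)^2+1$.

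Next I would bound $\dim(\UU_{2h+2}^{\rm ext})$ from above. A bundle in $\UU_{2h+2}^{\rm ext}$ is determined by the choice of $[\E_2]\in\UU_2$, the choice of $[\E'_{2h}]\in\UU_{2h}$, and the choice of a nonsplit extension class in $\PP(\Ext^1(\E'_{2h},\E_2))$. Using $\dim\UU_2=5$ (Proposition \ref{prop:beauville}), the inductive value $\dim\UU_{2h}=(2h)^2+1$ from Proposition \ref{prop:dimmod}, and $\dim\Ext^1(\E'_{2h},\E_2)=4h$ from Lemma \ref{lem:kjy}, I get
\[
\dim(\UU_{2h+2}^{\rm ext}) \;\leqslant\; 5+\big((2h)^2+1\big)+(4h-1)\;=\;4h^2+4h+5.
\]
Comparing with the lower bound $\dim(\UU_{2h+2})\geqslant (2h+2)^2+1=4h^2+8h+5$, the difference is $4h>0$, so strict inequality holds for all $h\geqslant 1$, which is exactly the claim.

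The main obstacle I expect is bookkeeping rather than genuine difficulty: I must make sure the numerical input to Lemma \ref{lemma:vaniz}(ii) is valid for $\F_{2h+2}$, i.e. that $c_2(\F_{2h+2})\cdot\O_X(1)=(h+1)(8h+5)$, which is forced by additivity of the total Chern class in \eqref{eq:opla} together with \eqref{eq:c2veronese}; and I must be careful that the parameter count for $\UU_{2h+2}^{\rm ext}$ correctly accounts for the projectivization (subtracting $1$ from $\dim\Ext^1$) and does not overcount isomorphic bundles — overcounting only strengthens an upper bound, so this is harmless here. One subtlety worth a remark is the degenerate case $h=1$, where $\E_2$ and $\E'_2$ both range over $\UU_2$; the hypothesis $\E_2\not\cong\E'_2$ in Lemma \ref{lem:kjy} is what keeps $h^0(\E_2\*(\E'_{2h})^*)=0$, and the parameter count $5+5+(4-1)=13<17=\dim\UU_4$ still gives the strict inequality, so no separate argument is needed.
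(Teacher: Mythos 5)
Your argument is correct and is essentially the paper's: you use the identical upper bound $\dim(\UU_{2h+2}^{\rm ext})\leq 5+(4h^2+1)+(4h-1)=4h^2+4h+5$ and compare it with $(2h+2)^2+1=4h^2+8h+5$. The only difference is in how the latter number is justified: the paper observes that $\mu(\E_2)=\mu(\E'_{2h})=8$, so that $\F_{2h+2}$ is simple by \cite[Lemma 4.2]{ch}, and then gets $\dim(\UU_{2h+2})=(2h+2)^2+1$ as an exact equality from Proposition \ref{prop:dimmod}, whereas you rederive it as a lower bound via the deformation-theoretic inequality together with Lemma \ref{lemma:vaniz}, exactly as in Lemma \ref{lemma:propcont}; both routes are valid. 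One small slip in your side remark: the correct value is $c_2(\F_{2h+2})\cdot\O_X(1)=2(h+1)(8h+5)$, i.e.\ \eqref{eq:c2veronese} with $h$ replaced by $h+1$, not $(h+1)(8h+5)$ --- this is harmless, since additivity along \eqref{eq:opla} indeed gives $10+2h(8h-3)+32h=2(h+1)(8h+5)$, so the hypotheses of Lemma \ref{lemma:vaniz}(ii) do hold.
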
  

\begin{proof} One checks that  in \eqref{eq:opla} one has $\mu(\E_2)=\mu(\E'_{2h})=8$. Thus, by \cite [Lemma\,4.2]{ch}, the bundle $\F_{2h+2}$ is simple, 
  whence $\dim (\UU_{2h+2}) = (2h+2)^2+1 = 4h^2 + 8 h +5$ by Proposition \ref{prop:dimmod}. On the other hand, by Lemma \ref {lem:kjy},   we have
\begin{eqnarray*}
\dim(\UU_{2h+2}^{\rm ext}) & \leq & \dim(\UU_2)+\dim(\UU_{2h})+\dim (\PP({\rm Ext}^1(\E'_{2h}, \E_2)))\\
 & = & 5+(4h^2+1)+(4h-1) = 4h^2 + 4 h + 5.
\end{eqnarray*}
\end{proof}

  \begin{lemma} \label{lemma:uniquedest2}  Let $\F_{2h+2}$  be as in \eqref {eq:opla} and
let  $\G$ be a destabilizing subsheaf of $\F_{2h+2}$. Then $\G^{*} \cong  \E_{2}^*$. 
\end{lemma}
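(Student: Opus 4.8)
The plan is to imitate the proof of Lemma~\ref{lemma:uniquedest}. Since $c_1(\F_{2h+2})=[\O_X(2h+2)]$, we have $\mu(\F_{2h+2})=8$, so a destabilizing subsheaf $\G$ satisfies $0<\rk(\G)\leqslant 2h+1$ and $\mu(\G)\geqslant 8$. I would set $\Q:=\im\{\G \subset \F_{2h+2} \to \E'_{2h}\}$ and $\K:=\ker\{\G \to \Q\}=\G \cap \E_2$, obtaining the same kind of $3\times 3$ commutative diagram with exact rows and columns as in Lemma~\ref{lemma:uniquedest}, in which $\K$ is a subsheaf of $\E_2$ and $\Q$ a subsheaf of $\E'_{2h}$.

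The core of the argument is a slope count that uses that \emph{both} $\E_2$ and $\E'_{2h}$ are slope-stable of slope $8$. Stability of $\E_2$ gives $\deg(\K)\leqslant 8\,\rk(\K)$, with $\K=0$ when $\rk(\K)=0$ and strict inequality when $0<\rk(\K)<2$; similarly, stability of $\E'_{2h}$ gives $\deg(\Q)\leqslant 8\,\rk(\Q)$, with equality forcing $\rk(\Q)\in\{0,2h\}$. Adding the two bounds and comparing with $\mu(\G)\geqslant 8$, i.e. $\deg(\K)+\deg(\Q)\geqslant 8(\rk(\K)+\rk(\Q))$, forces equality everywhere; hence $\rk(\K)\in\{0,2\}$, $\rk(\Q)\in\{0,2h\}$ and $\mu(\G)=8$. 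As $0<\rk(\G)=\rk(\K)+\rk(\Q)\leqslant 2h+1$, the only surviving possibilities are $(\rk(\K),\rk(\Q))=(2,0)$ and $(0,2h)$.

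In the case $(2,0)$ we have $\Q=0$, so $\G=\K$ is a full-rank subsheaf of $\E_2$ of degree $\deg(\E_2)$; its cokernel $\K'$ is then supported in codimension $\geqslant 2$, so $\Shext^i_{\O_X}(\K',\O_X)=0$ for $i\leqslant 1$, and dualizing $0 \to \G \to \E_2 \to \K' \to 0$ yields $\G^{*}\cong \E_2^{*}$, as required. The main obstacle is to rule out the remaining case $(0,2h)$, which has no counterpart in Lemma~\ref{lemma:uniquedest} (there the quotient has rank one). Here $\K=0$, so the composite $\G \hookrightarrow \F_{2h+2} \to \E'_{2h}$ is injective with cokernel supported in codimension $\geqslant 2$; its double dual is therefore an isomorphism $\G^{**}\cong \E'_{2h}$. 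Since $\F_{2h+2}$ is locally free, hence reflexive, the double dual of the inclusion $\G \hookrightarrow \F_{2h+2}$ then produces a morphism $\E'_{2h}\cong\G^{**}\to \F_{2h+2}$ splitting the surjection $\F_{2h+2}\to \E'_{2h}$, contradicting the non-splitness of \eqref{eq:opla}. Thus only the case $(2,0)$ occurs, and the lemma follows.
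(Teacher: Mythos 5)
Your proof is correct, and it follows the route the paper intends: the paper's own ``proof'' of this lemma is literally the one sentence ``This is identical to the proof of Lemma~\ref{lemma:uniquedest} and left to the reader,'' so you have in effect written out the argument that is left implicit. The case $(\rk(\K),\rk(\Q))=(2,0)$ is handled exactly as in Lemma~\ref{lemma:uniquedest}, and your symmetric slope estimate (using stability of \emph{both} $\E_2$ and $\E'_{2h}$ to force $\deg(\K)\leqslant 8\rk(\K)$ and $\deg(\Q)\leqslant 8\rk(\Q)$, then sandwiching against $\mu(\G)\geqslant 8$) is the clean way to dispose of all intermediate ranks of $\Q$ at once, which is necessary here since the quotient has rank $2h$ rather than $1$.

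The one place where your argument genuinely goes beyond a verbatim transcription is the exclusion of the case $(0,2h)$, and you are right that this case has no counterpart in Lemma~\ref{lemma:uniquedest}: there the analogous configuration is killed by the slope computation alone, whereas here $\G=\Q$ is a full-rank, equal-degree subsheaf of $\E'_{2h}$ and slopes give no contradiction. Your resolution --- noting that the degree equality forces the cokernel of $\G\hookrightarrow\E'_{2h}$ into codimension $\geqslant 2$, hence $\G^{**}\cong\E'_{2h}$, and that functoriality of double duals then produces a retraction $\E'_{2h}\cong\G^{**}\to\F_{2h+2}^{**}=\F_{2h+2}\to\E'_{2h}$ which is the identity on a dense open set and hence everywhere, splitting \eqref{eq:opla} --- is correct and is exactly the missing ingredient. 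So your write-up is not just a faithful imitation but a complete one; the paper's claim of ``identical'' glosses over precisely this step.
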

\begin{proof} This is identical to the proof of Lemma \ref{lemma:uniquedest} and left to the reader. 
\end{proof}

To prove that the general element of $\UU_{2h+2}$ is slope--stable one applies almost verbatim the 
argument of the last part of the proof of Lemma \ref{lemma:genUst}: indeed, arguing by contradiction, if it were not slope--stable 
then one applies Lemma \ref{lemma:uniquedest2} to prove that it lies in $\UU_{2h+2}^{\rm ext}$, which contradicts Lemma \ref{lem:dimextvero}. 
This completes the proof of Theorem \ref{thm:veronese}.

\section{Final remarks and speculations}\label{S:moduli} 

Let  $ X\subset \PP^{d+1}$ be a smooth 	  Fano threefold   of degree $d$ and   index two with $\omega_X \cong \O_X(-2)$. 
Theorem \ref{thm:Fanoind2} proves that for  every  $r \geqslant 2$, there exists 
a smooth, $\left(r^2+1\right)$--dimensional moduli space  $\UU_{X,r}$, parametrizing a  family of Ulrich bundles of rank $r$ and determinant $\O_X(r)$ on $X$.  For any $\E\in \UU_{X,r}$, one has $h^0(\E)=r d$ (cf., e.g.,  \cite [(3.1)]{be2}). By  Theorems \ref{thm:codim2} and \ref{thm:fano}, given a general subspace $V$ of dimension $r-1$ in $H^0(\E)$, the inclusion $V\otimes \O_X\longrightarrow \E$ drops rank along a smooth curve $C$  of genus
\color{black} \[ g_{r,d}=\frac{1}{3}rd(r^2-3r+2)+(r-1)^2 \]\color{black}
(cf. \eqref{eq:fano1} and \eqref{eq:fano3}).   

 Consider the moduli space $\ff_{2,d}$ of isomorphisms classes of smooth Fano threefolds of index two and degree $d$  and the universal moduli space $\UU_{r,d}$ of Ulrich bundles of rank $r$ over  $\ff_{2,d}$:   a   point in $\UU_{r,d}$ is a pair $([X],[\E])$, where $[X] \in \ff_{2,d}$  and $[\E]\in \UU_{X,r}$.   Let $\mathbb G$ be the  Grassmannian bundle over $\UU_{r,d}$, whose fibre 
over $([X],[\E])$ is $\Grass(r-1,H^0(X,\E))$. By the above considerations, we have a  rational map
\[
 \varphi_{r,d}: \mathbb G \dasharrow  \M_{g_{r,d}},
\]
 where  $\M_g$  is the moduli space of smooth projective curves of genus 
  $g$. Its image  is clearly uniruled, and, as such, it is an interesting sublocus of {$\M_{g_{r,d}}$.  In particular, it is a natural question to understand  for which $d$ and $r$  the map $\varphi_{r,d}$ is dominant. 

Let us focus on low values of $r$. For  $r=2$ one has $g_{2,d}=1$ and it is easy to deduce from the proof of Proposition \,\ref{prop:beauville} that $\varphi_{2,d}$ is dominant for all $d$. 

The next step is $r =3$. In this case
 $g_{3,d} = 2d +4$, and, as we have seen in Example \ref{ex:fanoindex2}, through the embedding in $X$ the curve $C$ admits a linearly and quadratically normal embedding as a curve of degree $3d+3$ in $\PP^{d+1}$ with injective Petri map $\mu_{0,\O_C(1)}$ \linebreak (cf. \eqref{eq:petri}).
(We also note that  the linear system 
 $|\O_{C}(1)|$  has Brill--Noether number $\rho(2d+4, d+1, 3d+3) = 0$.) This 
suggests that the map $\varphi_{3,d}$ might also be dominant. 

A result in this direction is the following:

\begin{proposition}\label{prop:su} The map $\varphi_{3,d}$ is dominant for $3\leq d\leq 4$.
\end{proposition}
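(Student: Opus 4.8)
The plan is to prove dominance by a fibre--dimension count, reducing everything (for $d=3$) to a single cohomological vanishing. Since $\M_{g_{3,d}}$ is irreducible, and since the source $\mathbb G$ is irreducible — being a Grassmann bundle over $\UU_{r,d}$, itself fibred over the irreducible spaces $\ff_{2,d}$ (cubic threefolds for $d=3$, $(2,2)$--complete intersections for $d=4$) with irreducible fibres $\UU_{X,3}$ (irreducibility being known for $d=3$ by \cite{FP} and for $d=4$ by \cite{CKL2}) — it suffices to show that $\dim\overline{\im\,\varphi_{3,d}}=\dim\M_{g_{3,d}}$. First I would record the dimensions. Using $\dim\UU_{X,3}=r^2+1=10$ from Proposition \ref{prop:dimmod}, $h^0(\E)=3d$, and $\dim\Grass(2,3d)=2(3d-2)$, together with $\dim\ff_{2,3}=10$ (moduli of cubic threefolds) and $\dim\ff_{2,4}=3$ (pencils of quadrics modulo $\mathrm{PGL}$), one gets
\[ \dim\mathbb G=\dim\ff_{2,d}+10+2(3d-2), \]
so $\dim\mathbb G=34$ for $d=3$ and $\dim\mathbb G=33$ for $d=4$, while $\dim\M_{10}=27$ and $\dim\M_{12}=33$. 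Hence the expected fibre dimension is $7$ for $d=3$ and $0$ for $d=4$. By the theorem on the dimension of fibres it is then enough to exhibit a single point $p_0\in\mathbb G$ at which the fibre of $\varphi_{3,d}$ has dimension at most $7$ (resp. is finite); this forces $\dim\overline{\im\,\varphi_{3,d}}\ge 27$ (resp. $\ge 33$), and the reverse inequality is automatic.

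The heart of the matter is to identify the fibre of $\varphi_{3,d}$ through a point $p_0=([X_0],[\E_0],V_0)$, with associated curve $C_0$, with the space of Fano threefolds of the prescribed type containing the embedded curve $C_0\subset\PP^{d+1}$. Indeed, the embedding is given by $|\O_{C_0}(1)|$, a $g^{d+1}_{3d+3}$ with $\rho=0$ whose Petri map is injective (Example \ref{ex:fanoindex2}(ii)); injectivity forces $[\O_{C_0}(1)]$ to be an isolated reduced point of $W^{d+1}_{3d+3}(C_0)$, so nearby deformations keep the distinguished series rigid and the embedding $C\hookrightarrow\PP^{d+1}$ essentially fixed (up to $\mathrm{PGL}$ and the finite group $\Aut(C)$). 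Moreover, by Remark \ref{rem:fano} the space $W=H^0(\omega_{C}(-1))$, hence the bundle $\E$ through the extension \eqref{eq:drago}, and hence the subspace $V=W^*\subset H^0(\E)$, are all uniquely determined by the pair $(X,C)$. Consequently the local fibre of $\varphi_{3,d}$ at $p_0$ is, up to finite maps, the space $\{X\supset C_0\}$, whose dimension I would now compute in each case.

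For $d=4$ the relevant threefolds are the $(2,2)$--complete intersections through $C_0$, i.e. the base loci of pencils in $\PP\,H^0(\I_{C_0/\PP^5}(2))$. Since $X_0$ is projectively normal and condition \eqref{eq:fano5} of Theorem \ref{thm:fano} (with $p=1$, $m=3$) makes $H^0(\O_{X_0}(2))\to H^0(\O_{C_0}(2))$ an isomorphism, the composite $H^0(\O_{\PP^5}(2))\to H^0(\O_{C_0}(2))$ is surjective, so $C_0$ is $2$--normal and $h^0(\I_{C_0/\PP^5}(2))=21-h^0(\O_{C_0}(2))=21-19=2$. Thus there is a unique pencil, the base locus is the single smooth threefold $X_0$, and the fibre is finite; this already gives $\dim\overline{\im\,\varphi_{3,4}}=33=\dim\M_{12}$, proving dominance for $d=4$. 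For $d=3$ the threefolds are the cubics through $C_0$, parametrised by $\PP\,H^0(\I_{C_0/\PP^4}(3))$; as $h^0(\I_{C_0/\PP^4}(3))\ge 35-h^0(\O_{C_0}(3))=35-27=8$ always, the fibre always has dimension $\ge 7$, with equality exactly when $C_0$ is $3$--normal in $\PP^4$. Choosing $p_0$ so that $C_0$ is $3$--normal thus yields a fibre of dimension exactly $7$, whence $\dim\overline{\im\,\varphi_{3,3}}=27=\dim\M_{10}$ and dominance for $d=3$. (In both cases the general such threefold is smooth and satisfies the remaining open conditions of Theorem \ref{thm:fano}, as one of them — the original $X_0$ — does.)

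The main obstacle is precisely the existence, for $d=3$, of a single curve $C_0$ in the image that is $3$--normal in $\PP^4$, equivalently the vanishing $h^1(\I_{C_0/\PP^4}(3))=0$, or $h^1(\I_{C_0/X_0}(3))=0$ — a twist not controlled by the vanishings \eqref{eq:B}--\eqref{eq:C} of Theorem \ref{thm:codim2}, which only concern $\O_X(3-p)$ for $p\in\{1,2,3\}$. By the upper semicontinuity of fibre dimension only one such example is needed, and it can be produced either from the explicit rank--three Ulrich curves on the cubic threefold constructed in \cite{ch} (cf. Example \ref{ex:fanoindex2}(ii)), or by a direct computation on the canonical surface $S\in|\O_{X_0}(3)|$ containing $C_0$, restricting $\O_{X_0}(3)$ to $C_0\subset S$ and using $\omega_S\cong\O_S(1)$. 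For $d=4$ the corresponding $2$--normality is automatic, as noted above, so no extra input is required there.
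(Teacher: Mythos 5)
Your argument is sound in outline but takes a genuinely different route from the paper's. The paper works from the other end: it takes a curve of genus $g_{3,d}$ with \emph{general moduli}, embeds it in $\PP^{d+1}$ by the Brill--Noether general ($\rho=0$) $g^{d+1}_{3d+3}$, and proves via the incidence variety $I=\{(C,F):C\subset F\}$ that the general such curve lies on a smooth threefold of the required type; the technical heart there is a degeneration (for $d=3$, to the union of a quadric and a hyperplane) showing that the second projection $I\to|\O_{\PP^4}(3)|$ is dominant. You instead start from the Ulrich curves already known to exist by Theorem \ref{thm:Fanoind2} and bound the fibre of $\varphi_{3,d}$ through a single point of $\mathbb G$, identifying it up to finite data (the finitely many Petri--general $g^{d+1}_{3d+3}$'s on $C_0$, the finite stabilizer of $C_0$ in $\mathrm{PGL}$, and the rigidity of $(W,\E,V)$ coming from Remark \ref{rem:fano} and the bijection of Theorem \ref{thm:codim2}) with the space of threefolds of the prescribed type containing the embedded $C_0$. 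Your dimension counts ($\dim\mathbb G=34$, resp.\ $33$; expected fibre dimension $7$, resp.\ $0$) are correct, and the case $d=4$ is complete: $2$--normality of $C_0$ follows from \eqref{eq:fano5} together with projective normality of $X_0$, so the pencil of quadrics through $C_0$ is unique and the fibre is finite.

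For $d=3$, however, you leave the decisive step --- the existence of one rank--three Ulrich curve $C_0\subset\PP^4$ with $h^1(\I_{C_0/\PP^4}(3))=0$ --- unproved, deferring it to \cite{ch} or to an unspecified computation on the canonical surface $S$ (the latter, as sketched, is circular: Riemann--Roch on $S$ expresses the relevant $h^1$ in terms of the very quantity $h^0(\I_{C_0/S}(3))$ you are trying to control). As written this is a genuine gap, and it sits exactly where dominance could fail: if every Ulrich curve had $h^1(\I_{C}(3))=t>0$, every fibre would have dimension at least $7+t$ and the image would have codimension at least $t$. Fortunately the gap closes in one line, and for \emph{every} such curve: an Ulrich bundle is ACM (this follows from the linear resolution used in the proof of Lemma \ref{lemma:vaniz}, cf.\ \cite{be2}), so $h^1(\E)=0$, while $h^2(\O_X)=0$ by Kodaira vanishing; hence \eqref{eq:drago} gives $h^1(\I_{C/X}(3))=0$, and projective normality of the cubic $X$ then gives $h^1(\I_{C/\PP^4}(3))=0$. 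With this observation inserted your proof is complete, and is arguably shorter than the paper's since it avoids the degeneration of the cubic altogether; what it consumes instead is the nonemptiness statement of Theorem \ref{thm:Fanoind2}, which the paper establishes independently beforehand.
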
 

\color{black}
\begin{proof}[Idea of the proof]  We  give an idea in the case $d=3$, where the target space is $\M_{10}$. The proof in the case $d=4$, where the target space is $\M_{12}$, is similar.

Since $\rho(10, 4, 12) = 0$, there are linearly normal curves with general moduli in $\mathbb P^4$ of degree 12 and genus 10. It suffices to prove that such a general curve lies on a smooth cubic hypersurface; indeed the remaining properties as in Example \ref{ex:fanoindex2}(ii) are automatically satisfied since the curve has general moduli. Actually one proves that the general such curve lies on a general cubic hypersurface.

To do so, let $\mathcal H$ be the component of the Hilbert scheme containing the above curves with general moduli. An easy count of parameters shows that $\dim(\mathcal H)=51$. Consider  the incidence variety
$$
I=\{(C,F)\in \mathcal H\times |\mathcal O_{\mathbb P^4}(3)|: C\subset F\}
$$
with the two projections $p_1: I\to \mathcal H$ and $p_2: I\to |\mathcal O_{\mathbb P^4}(3)|$. The map $p_1$ is dominant and the general fibre has dimension 7 (because the general curve $C\in \mathcal H$ is of maximal rank). So $\dim(I)=58$. One proves that $p_2$ is dominant by proving that the general fibre of $p_2$ has dimension  24 (recall that $\dim(|\mathcal O_{\mathbb P^4}(3)|)=34$). It suffices to show that there is one $F\in |\mathcal O_{\mathbb P^4}(3)|$ such that $p_2^{-1}(F)$ has dimension 24.  One proves this by degeneration, showing that this is the case if $F$ is the general union of a quadric and a hyperplane, which is the technical and lengthy part of the proof that we skip.  \end{proof}

\color{black}

As a consequence, one has that
 $\M_{10}$  and 
 $\M_{12}$  are uniruled, which is no  big  news,  because  we  know since a long time that  $\M_{10}$ and $\M_{12}$ are   
unirational  (cf. \cite{arse,Ser0}). 

The proof of Proposition \ref {prop:su}, based on delicate counts of parameters, is rather lenghty and we do not dwell on it here.

The question remains, how does $\varphi_{3,d}$ behave for $5\leq d\leq 7$? We suspect that in these cases $\varphi_{3,d}$ is dominant onto   a codimension 3 closed subset of  $\M_{2d+4}$.  Let us briefly explain the reason  for   this expectation. 
First of all   note  that the moduli spaces  $\ff_{2,d}$ consist of one isolated point for $5\leq d\leq 7$ (see  \cite {IP}). Hence  
$$
\dim(\mathbb G)=10+2(3d-2)=6d+6.
$$
Since $\dim(\M_{2d+4})=6d+9$, the map $\varphi_{3,d}$ cannot be dominant. We conjecture that it is generically finite onto its image, which would prove that $\varphi_{3,d}$  dominates a  codimension 3 closed subset of  $\M_{2d+4}$.

\end{document}